\theoremstyle{plain}
\newtheorem{theorem}{Theorem}[section]
\theoremstyle{remark}
\newtheorem{remark}[theorem]{Remark}
\newtheorem{example}[theorem]{Example}
\theoremstyle{plain}
\newtheorem{lemma}[theorem]{Lemma}
\newtheorem{proposition}[theorem]{Proposition}
\newtheorem{definition}[theorem]{Definition}
\newtheorem{assumption}[theorem]{Assumption}
\def\avint_#1{\mathchoice%
	{\mathop{\kern 0.2em\vrule width 0.6em height 0.69678ex depth -0.58065ex
			\kern -0.8em \intop}\nolimits_{\kern -0.4em#1}}%
	{\mathop{\kern 0.1em\vrule width 0.5em height 0.69678ex depth -0.60387ex
			\kern -0.6em \intop}\nolimits_{#1}}%
	{\mathop{\kern 0.1em\vrule width 0.5em height 0.69678ex depth -0.60387ex
			\kern -0.6em \intop}\nolimits_{#1}}%
	{\mathop{\kern 0.1em\vrule width 0.5em height 0.69678ex depth -0.60387ex
			\kern -0.6em \intop}\nolimits_{#1}}}
\newcommand{\N}{\ensuremath{\mathbb{N}}}
\newcommand{\Z}{\ensuremath{\mathbb{Z}}}
\newcommand{\R}{\ensuremath{\mathbb{R}}}
\newcommand{\C}{\ensuremath{\mathbb{C}}}
\newcommand{\B}{\ensuremath{\mathcal{B}}}
\newcommand{\tr}{\text{tr}}
\newcommand{\osc}{{\rm osc}}
\DeclareMathOperator{\esssup}{ess.\,sup}
\def\Xint#1{\mathchoice
	{\XXint\displaystyle\textstyle{#1}}%
	{\XXint\textstyle\scriptstyle{#1}}%
	{\XXint\scriptstyle\scriptscriptstyle{#1}}%
	{\XXint\scriptscriptstyle%
		\scriptscriptstyle{#1}}%
	\!\int}
\def\XXint#1#2#3{{\setbox0=\hbox{$#1{#2#3}{%
				\int}$ }
		\vcenter{\hbox{$#2#3$ }}\kern-.6\wd0}}
\def\dashint{\Xint-}
\numberwithin{equation}{section}
\begin{document}
\numberwithin{equation}{section}

\author{Hongjie Dong}
\address{Division of Applied Mathematics\\
Brown University\\ Providence RI 02912, USA} \email{hongjie\_dong@bown.edu}
	
\author{Chiara Gallarati}
\address{Delft Institute of Applied Mathematics\\
	Delft University of Technology \\ P.O. Box 5031\\ 2600 GA Delft\\The
	Netherlands} \email{gallarati1@gmail.com}

\date\today
	
\title[Higher-order elliptic and parabolic equations]
{Higher-order elliptic and parabolic equations with VMO assumptions and general boundary conditions}
	
\begin{abstract}
We prove mixed $L_{p}(L_{q})$-estimates, with $p,q\in(1,\infty)$, for higher-order elliptic and parabolic equations on the half space $\R^{d+1}_{+}$ with general boundary conditions which satisfy the Lopatinskii--Shapiro condition.
We assume that the elliptic operators $A$ have leading coefficients which are in the class of vanishing mean oscillations both in the time variable and the space variable.
In the proof, we apply and extend the techniques developed by Krylov \cite{Kry07} as well as Dong and Kim in \cite{DK11} to produce mean oscillation estimates for equations on the half space with general boundary conditions.
\end{abstract}

\keywords{elliptic and parabolic equations, the Lopatinskii--Shapiro condition, inhomogeneous boundary conditions, mixed-norms, Muckenhoupt weights}
\thanks{H. Dong was partially supported by the NSF under agreements DMS-1056737 and DMS-1600593. \\
	\indent C. Gallarati was supported by the Vrije Competitie subsidy 613.001.206 of the Netherlands Organisation for Scientific Research (NWO)}

\maketitle

\section{Introduction}
The $L_{p}(L_{q})$-regularity for differential equations has been proved to be a very useful tool for quasi-linear and nonlinear parabolic problems, as their solutions very often can be derived from the linear problem via elegant linearization techniques combined with the contraction mapping principle, see e.g. \cite{CleLi,Am,Lun}. For this, it is useful to look for minimal smoothness assumptions on the coefficients of the differential operators involved. Various approaches can be found in problems from mathematical physics, e.g. fluid dynamics, reaction-diffusion equations, material science, etc. See e.g. \cite{CleLi,GHHetc,MSWM}.

In this paper we establish $L_{p}(L_{q})$-estimates with $p,q\in(1,\infty)$ for higher-order parabolic equations of the form
\begin{equation}\label{prob:intro}
\begin{dcases}
u_t +(\lambda+A)u=f & {\rm on}\quad \R\times\R^{d}_{+}\\
{\tr}_{\R^{d-1}}B_{j}u=g_j & {\rm on}\quad \R\times\R^{d-1}, j=1,\ldots,m,
\end{dcases}
\end{equation}
where ``tr'' denotes the trace operator, $A$ is an elliptic differential operator of order $2m$, and $(B_j)$ is a family of differential operators of order $m_j<2m$ for $j=1,\ldots,m$. The coefficients of $A$ are assumed to be in the class of vanishing mean oscillations (VMO) both in the time and space variable, while the leading coefficients of $B_j$ are assumed to be constant. In addition, we assume that near the boundary $(A,B_j)$ satisfies the Lopatinskii--Shapiro condition. This condition was first introduced by Lopatinskii \cite{Lop53} and Shapiro \cite{Sha53}. See also the seminal work of Agmon--Douglis--Nirenberg \cite{ADN64}.
Roughly speaking, it is an algebraic condition involving the symbols of the principle part of the operators $A$ and $B_j$ with fixed coefficients, which is equivalent to the solvability of certain systems of ordinary differential equations.

Research on $L_{p}(L_{q})$-regularity for this kind of equations has been developed in the last decades by mainly two different approaches.
	
On the one hand, a PDE approach have been developed by a series of papers by Krylov, Dong, and Kim. Krylov in \cite{Krypq} showed $L_{p}(L_{q})$-regularity for second-order operators in the whole space with coefficients merely measurable in time and VMO in space, with the restriction $q\le p$.
The methodology of Krylov was then extended by Dong and Kim in \cite{DK09, DK11} to higher-order systems with the same class of coefficients. In \cite{DK11}, a new technique was developed to produce mean oscillation estimates for equations in the whole and half spaces with the Dirichlet boundary condition, for $p=q$. These results had been extended recently by the same authors in \cite{DK16} to mixed $L_p(L_q)$-spaces with Muckenhoupt weights and small BMO assumptions on the space variable, for any $p,q\in(1,\infty)$. It is worth noting that in all these references as well as others papers in the literature, VMO coefficients were only considered for equations with specific boundary conditions (Dirichlet, Neumann, or conormal, etc.).

On the other hand, from a functional analytic point of view, $L_{p}(L_{q})$-regularity can be viewed as an application of a more general abstract result, namely that of maximal $L_{p}$-regularity.
Maximal $L^p$-regularity means that, under certain assumption on $g_j$, for all $f\in L_p(\R,L_q(\R^d_+))$, the solution to the evolution problem \eqref{prob:intro} has the  ``maximal'' regularity in the sense that $u_t,  Au$ are both in $L_p(\R,L_q(\R^d_+))$.
In the case of time-independent coefficients, a complete operator-theoretic characterization of maximal $L_{p}$-regularity was introduced by Weis in \cite{Weis01}, using a new approach based on functional calculus and Fourier multiplier theorems. Using perturbation arguments combined with the characterization in \cite{Weis01}, one can study maximal $L_p$-regularity in the case when $t\mapsto A(t)$ is continuous. See, for instance, \cite{Am04,ACFP07,PS01}.
Recently, in \cite{GV,GVsystem} Gallarati and Veraar obtained maximal $L_p$-regularity for evolution equations with time-dependent operators, assuming only measurable dependence on time. This result was applied to show $L_{p}(L_{q})$-estimates for parabolic equations/systems in the whole space case in a weighted setting, for any $p,q\in(1,\infty)$, assuming that coefficients are uniformly continuous in the spatial variables and just measurable in the time variable. This generalized the results in \cite{Krypq}, where the restriction $q\le p$ is imposed, for this setting.
	
With coefficients in the class of VMO, higher-order systems in the whole space have been investigated in several papers, for example \cite{HH, HHH} where the leading coefficients are VMO with respect to the space variable and independent of the time variable, by using Muckenhoupt weights and estimates of integral operators of the Calder\'on--Zygmund type.
	
Concerning $L_p(L_q)$-regularity for equations on the half-space with boundary conditions satisfying the Lopatinskii--Shapiro condition, a breakthrough result was obtained by Denk, Hieber, and Pr\"{u}ss in \cite{DHP} in the case of autonomous initial boundary value problems with homogeneous boundary conditions and operator-valued constant coefficients. They combined operator sum methods with tools from vector-valued harmonic analysis to show $L_{p}(L_{q})$-regularity, for any $p,q\in(1,\infty)$, for parabolic problems with general boundary conditions of homogeneous type, in which the leading coefficients are assumed to be bounded and uniformly continuous.
Later, in \cite{DHP07}, the same authors characterized optimal $L_{p}(L_{q})$-regularity for non-autonomous, operator-valued parabolic initial-boundary value problems with inhomogeneous boundary data, where the dependence on time is assumed to be continuous. It is worth noting that in the special case of $m=1$, complex-valued coefficients and $q\le p$, a similar result was obtained by Weidemaier \cite{Weide02}.
The results of \cite{DHP07} have been generalized by Meyries and Schnaubelt in \cite{MS12b} to the weighted time-dependent setting, where the weights considered are Muckenhoupt power-type weights. See also \cite{MeyThesis}.
	
In this paper, we relax the assumptions on the coefficients of the operators involved. We obtain weighted $L_p(L_q)$-estimates for parameter-elliptic operators on the half space with coefficients VMO in the time and space variables, and with general boundary operators having constant leading coefficients and satisfying the Lopatinskii--Shapiro condition. An overview of our main result is given in the following theorem.

\begin{theorem}
Let $p,q\in(1,\infty)$. Then there exists $\lambda_0\geq 0$ such that for any $\lambda\geq\lambda_0$ and $u\in W^{1}_{p}(\R;L_{q}(\R^d_+))$
	$\cap L_{p}(\R;W^{2m}_{q}(\R^d_+))$ satisfying \eqref{prob:intro}, where $$
f\in L_p(\R;L_{q}(\R^d_+))\quad \text{and}\quad
g_j\in F^{k_j}_{p,q}(\R;L_{q}(\R^{d-1}))\cap L_{p}(\R;B_{q,q}^{2mk_j}(\R^{d-1}))
$$
with $k_j=1-m_j/(2m)-1/(2mq)$,  we have
\begin{multline*}
	\|u_t\|_{L_{p}(\R;L_{q}(\R^{d}_+))}+\sum_{|\alpha|\le 2m}\lambda^{1-\frac{|\alpha|}{2m}}\|D^{\alpha}u\|_{L_{p}(\R;L_{q}(\R^{d}_+))}\\
	\le C\|f\|_{L_{p}(\R;L_{q}(\R^{d}_+))} + C\|g_j\|_{F^{k_j}_{p,q}(\R;L_{q}(\R^{d-1}))\cap L_{p}(\R;B_{q,q}^{2mk_j}(\R^{d-1}))},
\end{multline*}
where $C>0$ is a constant independent of $\lambda$, $u$, $f$, and $g_j$.
\end{theorem}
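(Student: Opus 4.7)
The plan is to follow the Krylov--Dong--Kim scheme combined with trace/lifting arguments for the inhomogeneous boundary data, and to produce the main mean-oscillation estimate for a model problem on the half space with general Lopatinskii--Shapiro boundary operators.

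First I would reduce to homogeneous boundary data. Using the mapping properties of the boundary trace operator on the mixed anisotropic space $W^{1}_{p}(\R;L_{q}(\R^d_+))\cap L_{p}(\R;W^{2m}_{q}(\R^d_+))$, I would construct, for each $g_j$ in the stated Triebel--Lizorkin/Besov norm, a lift $w_j$ that lives in the solution space and whose trace under $B_j$ equals $g_j$, with estimates controlled by the norm of $g_j$ and suitable powers of $\lambda$. Subtracting $\sum_j w_j$ from $u$ reduces the problem to showing the a priori bound when $g_j=0$ and the right-hand side is modified by a controlled perturbation. Then it remains to prove
\[
\|u_t\|_{L_{p}(L_{q})}+\sum_{|\alpha|\le 2m}\lambda^{1-\tfrac{|\alpha|}{2m}}\|D^{\alpha}u\|_{L_{p}(L_{q})}\le C\|f\|_{L_{p}(L_{q})}
\]
for solutions $u$ satisfying homogeneous boundary conditions.

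Second, for the \emph{model problem} where $A$ has constant leading coefficients (with lower order terms absent) and with the same homogeneous Lopatinskii--Shapiro boundary data, I would invoke the known solvability and $L_p(L_q)$-estimate of Denk--Hieber--Pr\"{u}ss, and then derive pointwise mean-oscillation bounds. Concretely, following the Dong--Kim approach in \cite{DK11}, I would split the model solution into an interior piece (treated by a whole-space argument via odd/even reflection is not available here, so instead one works directly) and a piece controlled on cylinders touching the boundary. The goal is to prove, for any parabolic cylinder $Q_r=(t_0-r^{2m},t_0)\times B_r^+$, an estimate of the type
\[
\dashint_{Q_r}|D^{2m}u-(D^{2m}u)_{Q_r}|\,dx\,dt\le C\kappa^{-1}\|f\|_{L_{\infty}(Q_{\kappa r})}+C\kappa^{-\gamma}\brac*{\dashint_{Q_{\kappa r}}|D^{2m}u|^{s}}^{1/s}
\]
for some $\gamma>0$, $s>1$, and all large $\kappa$. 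The key ingredient is a Caccioppoli-type higher integrability and Schauder-style decay for solutions of the homogeneous model problem with constant coefficients near the flat boundary; this is where the Lopatinskii--Shapiro condition is used in an essential way, via the explicit Poisson-type representation of boundary data solutions and its decay properties derived in the works of Agmon--Douglis--Nirenberg and \cite{DHP}.

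Third, to pass from the model problem to the VMO case I would freeze the leading coefficients at a point and use a perturbation argument: the smallness of the mean oscillation of the coefficients on small cylinders allows absorbing the commutator term into the left-hand side. This is standard once the model estimate above is in place. Finally, to upgrade the mean-oscillation estimate into the weighted $L_p(L_q)$ estimate for general $p,q\in(1,\infty)$, I would apply the Fefferman--Stein sharp function theorem together with the Hardy--Littlewood maximal function theorem in mixed-norm Lebesgue spaces, as in \cite{DK16,GV,GVsystem}; the parameter $\lambda_0$ absorbs the lower order terms.

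The main obstacle I expect is the boundary mean-oscillation estimate in step two for general Lopatinskii--Shapiro boundary operators of possibly different orders $m_j<2m$. Unlike the Dirichlet case treated in \cite{DK11}, one cannot exploit a simple reflection; instead one must prove quantitative H\"{o}lder decay and higher integrability for the model problem directly from the Lopatinskii--Shapiro condition, via a careful analysis of the boundary symbol and the corresponding system of ODEs in the normal direction. Once this local regularity of the model problem is established with constants uniform in $\lambda$ (after the usual rescaling $x\mapsto \lambda^{1/(2m)}x$, $t\mapsto \lambda t$), the rest of the argument runs in parallel with the whole-space theory and the Dirichlet half-space theory of Krylov, Dong, and Kim.
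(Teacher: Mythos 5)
Your overall architecture — reduce to homogeneous boundary data, prove a mean-oscillation estimate for a constant-coefficient model problem, perturb to VMO coefficients, upgrade via Fefferman--Stein and Hardy--Littlewood in mixed norms — matches the paper's strategy at the level of outline, but two central technical points in your plan would not go through as written.

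First, you propose to bound the mean oscillation of the \emph{full} Hessian $D^{2m}u$, including the pure normal derivative $D_{x_1}^{2m}u$. This cannot work: for the inhomogeneous problem, $D_{x_1}^{2m}u$ is determined pointwise by the equation $a_{\tilde\alpha}D_{x_1}^{2m}u = f - u_t - \lambda u - \sum_{\alpha_1<2m}a_\alpha D^\alpha u$, so its oscillation on a small cylinder is coupled to the oscillation of $f$, which is not controlled (only an $L_s$ average of $f$ is available). The paper's Lemma~\ref{lemma:moe1} and Lemma~\ref{lemma:moe2} systematically exclude $\alpha_1=2m$ from the left-hand side and only include $D^\alpha u$ with $\alpha_1<2m$; the normal derivative $D_{x_1}^{2m}u$ is then recovered algebraically from the equation \emph{after} the $L_{p}(L_q)$ estimate for the other derivatives is in hand (see \eqref{eq:norms2} in Lemma~\ref{lemma:normspq}). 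Without this exclusion, the sharp-function argument breaks, and $\|f\|_{L_\infty(Q_{\kappa r})}$ on the right of your display is not compatible with an $f$ that is only locally $L_s$-integrable.

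Second, your plan for the boundary local regularity step ("Schauder-style decay from the Poisson-type representation") is too vague to be checked and misses the concrete device the paper introduces. To go from the H\"older bound $[D^{2m-1}v]_{C^{\nu/2m,\nu}(Q_{r_1}^+)}\le C\|v\|_{L_p(Q_{r_2}^+)}$ to one where the right-hand side involves only \emph{top-order} derivatives, one must subtract a polynomial $P$ of degree $2m-2$ such that $v-P$ \emph{still satisfies the boundary conditions}, and then apply the resulting Poincar\'e-type inequality $\|D^\alpha(v-P)\|_{L_p(Q^+)}\le C\|D^{2m-1}v\|_{L_p(Q^+)}$. Constructing such a $P$ is the main novelty of the paper (Lemma~\ref{lemma:poly}); it uses the linear independence of the boundary operators (guaranteed by the Lopatinskii--Shapiro condition) to solve a linear system for the polynomial coefficients at each degree inductively. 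Your proposal has no counterpart of this step, and a generic Schauder/Poisson-kernel argument does not by itself produce the cancellation needed to pass from $\|v\|_{L_p}$ to $\|D^{2m}v\|_{L_p}$ on the right, which is essential for the $\kappa^{-\nu}$ smallness in the mean-oscillation estimate and hence for absorption. Finally, your treatment of the inhomogeneous $g_j$ via an abstract lift is plausible but differs from the paper, which instead solves an auxiliary constant-coefficient problem carrying the boundary data and subtracts; either route can be made to work, but the lift must be constructed compatibly with the $\lambda$-scaled norms, which requires some care.
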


This is stated in Theorem \ref{thm:VMOproblemLS}, where we also consider Muckenhoupt weights, and in the elliptic setting in Theorem \ref{thm:VMOellipticLS}.

To the best of our knowledge, these are the first results concerning equations with VMO coefficients and general boundary conditions. Our proofs are based on the results in \cite{DHP} combined with an  extension of the techniques developed in \cite{Kry07,krylov,DKBMO11,DK11,DK16}. In particular, in the main result of Section \ref{sec:moe}, Lemma \ref{lemma:moe1}, we prove mean oscillation estimates for equations on the half space with the Lopatinskii--Shapiro condition. A key ingredient of the proof is a Poincar\'e type inequality for solutions to equations satisfying the Lopatinskii--Shapiro condition, which is the main novelty of the paper.

To simplify the exposition and not to overburden this paper, here we only consider equations with boundary operators with constant leading coefficients.
In a subsequent work \cite{DG17II}, we will further study boundary operators with variable leading coefficients. In contrast to the case when $A$ has uniformly continuous leading coefficients, the extension of the results in this paper to boundary operators with variable leading coefficients is nontrivial and does not follow from the standard perturbation argument.
In fact, under the VMO assumption on the coefficients of $A$, in the case when the boundary operators have variable leading coefficients, to apply the method of freezing the coefficients as in Lemma \ref{lemma:moe2} below one would need to show the mean oscillation estimates of Lemma \ref{lemma:moe1} for an equation with inhomogeneous boundary conditions. To the best of the authors' knowledge, this case is not covered by the known theory. Moreover, the well-known localization procedure (see for instance \cite[Section 8]{DHP}) does not seem to directly apply to the case $p\neq q$, since we would need a partition of unity argument in both $t$ and $x$. The same problem would arise if one considers bounded smooth domains instead of the upper-half space: the technique of flattening the boundary would lead to an equation with boundary conditions with variable coefficients. This case will be treated as well in \cite{DG17II}.

The remaining part of the paper is organized as follows. In Section \ref{sec:preliminaries} we give the necessary preliminary results and introduce the notation. In Section \ref{sec:assumptions} we list the main assumptions on the operators and state the main results, Theorems \ref{thm:VMOproblemLS} and \ref{thm:VMOellipticLS}. In Section \ref{sec:moe} we prove the mean oscillation estimates needed for the proofs of the main theorems, which are given in Section \ref{sec:proofmainresult}. Finally, in Section \ref{sec:solvab} we prove a solvability result by using the a priori estimates in the previous sections.
	
\smallskip

{\em Acknowledgment.} – The authors would like to thank the anonymous referees for the careful reading and helpful comments.	
\section{Preliminaries}\label{sec:preliminaries}
In this section, we state some necessary preliminary results and introduce the notation used throughout paper.

\subsection{$A_p$-weights}\label{subsectionDG:weight}

Details on Muckenhoupt weights can be found in \cite[Chapter 9]{GrafakosModern} and \cite[Chapter V]{SteinHA}.

A {\em weight} is a locally integrable function on $\R^d$ with $w(x)\in (0,\infty)$ for almost every $x \in \R^d$. The space $L_p(\R^d,w)$ is defined as all measurable functions $f$ with
\begin{equation*}
	\|f\|_{L_p(\R^d,w)}=\Big(\int_{\R^d} |f|^p\ w \, dx\Big)^\frac{1}{p}<\infty \quad  \text{if $p\in [1, \infty)$},
\end{equation*}
and $\displaystyle \|f\|_{L_\infty(\R^d,w)} = \esssup_{x\in \R^d} |f(x)|$.

With this notion of weights and weighted $L_p$-spaces we can define the class of Muckenhoupt weights $A_{p}$ for all $p \in (1,\infty)$. A weight $w$ is said to be an {\em $A_{p}$-weight} if
\begin{align*}
	[w]_p=[w]_{A_{p}}:=\sup_{B} \Big(\dashint_B w(x) \, dx\Big) \Big(\dashint_B w(x)^{-\frac{1}{p-1}}\, dx \Big)^{p-1}<\infty.
\end{align*}
Here the supremum is taken over all balls $B\subset \R^d$ and $\avint_{B} = \frac{1}{|B|}\int_{B}$. The extended real number $[w]_{A_{p}}$ is called the {\em $A_p$-constant}. In the case of the half-space $\R^{d}_{+}$, we replace the balls $B$ in the definition by $B\cap \R^{d}_{+}=:B^{+}$ with center in $\overline{\R^{d}_{+}}$.

The classical Hardy--Littlewood maximal function theorem and the Fefferman--Stein theorem (see \cite[Theorem 9.1.9 and Corollary 7.4.6]{GrafakosModern}) have been recently generalized to mixed $L_{p}(\R,v;L_{q}(\R^d_+,w))$ spaces by Dong and Kim in Corollaries 2.6 and 2.7 of \cite{DK16}. Their proofs are based on the extrapolation theorem of Rubio de Francia (see \cite{Rubio82, Rubio83, Rubio84}, or  \cite[Chapter IV]{GarciaRubio}), that allows one to extrapolate from weighted $L_p$-estimates for a single $p\in (1,\infty)$ to weighted $L_q$-estimates for all $q\in (1,\infty)$.
These results will play an important role in the proof of Theorem \ref{thm:VMOproblemLS}, and thus we state them below for completeness.

{For $m=1,2,\ldots$ fixed depending on the order of the equations under consideration, we denote by
\begin{equation}\label{eq:paraboliccylynder}
Q_r^{+}(t,x)=((t-r^{2m},t)\times B_r(x))\cap \R^{d+1}_+
\end{equation}
the parabolic cylinders, where
$$
B_r(x)=\big\{y\in\R^d:|x-y|<r\big\}\subset\R^d
$$
denotes the ball of radius $r$ and center $x$.
We use $Q_r^{+}$ to indicate $Q_r^{+}(0,0)$. We also define
$$
B_r^+(x)=B_r(x)\cap \R^{d}_+.
$$
Let $\mathcal{Q}=\{Q_{r}^{+}(t,x):(t,x)\in\R^{d+1}_{+},\ r\in(0,\infty)\}$. Define for $p,q\in(1,\infty)$ the parabolic maximal function and sharp function of a function $f\in L_{p}(\R;L_{q}(\R^d_+))$ by
\[
\mathcal{M}f(t,x)=\sup_{\substack{Q\in\mathcal{Q}\\ (t,x)\in Q}}\dashint_{Q}|f(s,y)|\, dy\, ds
\]
and
\[
f^{\sharp}(t,x)=\sup_{\substack{Q\in\mathcal{Q}\\ (t,x)\in Q}}\dashint_{Q}|f(s,y)-\dashint_{\mathcal{Q}}f(t,x)\, dx\,dt|\, dy\, ds.
\]

\begin{theorem}[Corollary 2.6 of \cite{DK16}]\label{thm:HL}
	Let $p,q\in (1,\infty)$, $v\in A_{p}(\R)$ and $w\in A_{q}(\R^d_+)$. Then for any $f\in L_{p}(\R,v;L_{q}(\R^d_+,w))$, we have
	\[
	\|\mathcal{M}f\|_{L_{p}(\R,v;L_{q}(\R^d_+,w))}\le C\|f\|_{L_{p}(\R,v;L_{q}(\R^d_+,w))},
	\]
	where $C=C(d,p,q,[v]_{p},[w]_{q})>0$.
\end{theorem}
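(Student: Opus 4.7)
The plan is to reduce the mixed-norm bound for the parabolic maximal $\mathcal{M}$ to scalar weighted Hardy--Littlewood estimates in space and in time, and then invoke the vector-valued form of Rubio de Francia's extrapolation theorem to combine them.

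First I would establish the pointwise domination
$$\mathcal{M}f(t,x)\le \mathcal{M}^{t}(\mathcal{M}^{x}f)(t,x),$$
where $\mathcal{M}^{x} h(s,x)=\sup_{r>0}\dashint_{B_r^+(x)}|h(s,y)|\,dy$ is the spatial Hardy--Littlewood maximal on $\R^d_+$ (acting in $y$ only) and $\mathcal{M}^{t} g(t)=\sup_{r>0}r^{-2m}\int_{t-r^{2m}}^{t}|g(s)|\,ds$ is the one-sided temporal maximal; this follows from Fubini together with $|Q_r^+(t,x)|=r^{2m}|B_r^+(x)|$. The two underlying scalar inequalities are standard: the spatial estimate $\|\mathcal{M}^{x} h(s,\cdot)\|_{L_q(\R^d_+,w)}\le C([w]_{A_q})\|h(s,\cdot)\|_{L_q(\R^d_+,w)}$ is the weighted Hardy--Littlewood theorem on $\R^d_+$ (transferred from the whole-space version by even reflection across $\partial\R^d_+$, which preserves the $A_q$-constant up to a dimensional factor), while the temporal estimate $\|\mathcal{M}^{t} g\|_{L_p(\R,v)}\le C([v]_{A_p})\|g\|_{L_p(\R,v)}$ for every $v\in A_p(\R)$ follows because $\mathcal{M}^{t}$ (after $r\mapsto r^{2m}$) is dominated by the standard centered one-dimensional maximal.

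To combine these scalar bounds into the claimed mixed-norm estimate I would invoke Rubio de Francia's extrapolation theorem in its integral/Banach-lattice form (see \cite[Chapter IV]{GarciaRubio}): the uniform scalar $A_{p_0}$-bound for $\mathcal{M}^{t}$ at any fixed exponent $p_0\in(1,\infty)$ lifts to the $L_q(\R^d_+,w)$-valued inequality
$$\bigl\|\mathcal{M}^{t} g\bigr\|_{L_p(\R,v;L_q(\R^d_+,w))}\le C\|g\|_{L_p(\R,v;L_q(\R^d_+,w))}$$
for every $p,q\in(1,\infty)$, $v\in A_p(\R)$, $w\in A_q(\R^d_+)$ and every nonnegative measurable $g$. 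Applying this with $g=\mathcal{M}^{x} f$, and then using the spatial bound slice-by-slice in $t$, closes the argument. The main obstacle is precisely this last step: pulling a $t$-pointwise supremum inside the $L_q(\R^d_+,w)$-norm in the spatial variable cannot be done by a naive Fubini/Minkowski argument (which would go the wrong way), and one really needs the extrapolation theorem to see that the temporal maximal, which acts only in $t$, extends boundedly to the mixed-norm space.
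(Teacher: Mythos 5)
The paper does not actually prove this statement: it is quoted as Corollary~2.6 of \cite{DK16}, and the text only remarks that the argument there rests on Rubio de Francia's extrapolation theorem. Your sketch is in exactly that spirit and is correct in substance, so you have essentially reconstructed the route the paper defers to.

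One small imprecision worth fixing: the cylinders $Q_r^+(t_0,x_0)\in\mathcal{Q}$ that contain $(t,x)$ need not be centered at $(t,x)$, so the spatial and temporal maximals in your pointwise domination must be the \emph{non-centered} ones (or one compares to the centered ones by enlarging the radius by a factor $2$ and absorbing a fixed dimensional constant). As you wrote them, with the centered half-ball $B_r^+(x)$ and the one-sided interval $(t-r^{2m},t)$ anchored at $t$, the inequality $\mathcal{M}f\le\mathcal{M}^t(\mathcal{M}^x f)$ does not quite hold pointwise; the corrected version $\mathcal{M}f\le C_d\,\mathcal{M}^t(\mathcal{M}^x f)$ does, and the rest of the argument goes through unchanged. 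Beyond this, the three ingredients you use are sound: the slice-wise weighted Hardy--Littlewood bound for $\mathcal{M}^x$ on $\R^d_+$ (obtained by even reflection), the scalar one-dimensional bound for $\mathcal{M}^t$ on $L_p(\R,v)$ for all $v\in A_p(\R)$, and the Banach-lattice (vector-valued) form of Rubio de Francia extrapolation to lift the latter to $L_p(\R,v;L_q(\R^d_+,w))$ with $w\,dx$ absorbed as a fixed $\sigma$-finite measure on the inner space. Relative to \cite{DK16}, where the maximal bound comes out of a general mixed-norm extrapolation theorem (their Theorem~2.5), your factorization of $\mathcal{M}$ into single-variable maximals followed by classical vector-valued extrapolation for the temporal piece alone is a slightly more hands-on organization of the same idea; it has the virtue of making explicit that extrapolation is only needed to push the one-variable maximal past the inner $L_q(\R^d_+,w)$-norm, precisely the step a naive Fubini or Minkowski argument cannot supply.
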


\begin{theorem}[Corollary 2.7 of \cite{DK16}]\label{thm:ffst}
	Let $p,q\in (1,\infty)$, $v\in A_{p}(\R)$ and $w\in A_{q}(\R^d_+)$. Then for any $f\in L_{p}(\R,v;L_{q}(\R^d_+,w))$, we have
	\[
	\|f\|_{L_{p}(\R,v;L_{q}(\R^d_+,w))}\le C\|f^{\sharp}\|_{L_{p}(\R,v;L_{q}(\R^d_+,w))},
	\]
	where $C=C(d,p,q,[v]_{p},[w]_{q})>0$.
\end{theorem}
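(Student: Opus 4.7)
The plan is to deduce the weighted mixed-norm Fefferman--Stein inequality from the classical single-weight, single-exponent version by two applications of the extrapolation theorem of Rubio de Francia, precisely along the lines sketched in the excerpt just before the statement.

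The first step is to establish the single-weight parabolic Fefferman--Stein inequality: for one fixed $p_0 \in (1,\infty)$ (say $p_0=2$) and any parabolic Muckenhoupt weight $W \in A_{p_0}(\R^{d+1}_+)$,
\[
\|f\|_{L_{p_0}(\R^{d+1}_+, W)} \le C\|f^{\sharp}\|_{L_{p_0}(\R^{d+1}_+, W)}.
\]
This is classical. The key is a good-$\lambda$ estimate relating the parabolic maximal function $\mathcal{M}f$ (for which the weighted bound is provided by Theorem \ref{thm:HL}) to the parabolic sharp function $f^{\sharp}$:
\[
\big|\{\mathcal{M}f > \alpha\lambda,\ f^{\sharp} \le \gamma\lambda\}\big|_{W} \le C\gamma^{\delta}\big|\{\mathcal{M}f > \lambda\}\big|_{W}
\]
for each fixed $\alpha>1$, with $\delta>0$ arising from the reverse H\"older property of $A_{p_0}$ weights, and $|\cdot|_W$ denoting the $W$-weighted measure. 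Multiplying by $p_0\lambda^{p_0-1}$, integrating over $\lambda>0$, choosing $\gamma$ small so as to absorb, and invoking the pointwise bound $|f|\le \mathcal{M}f$ a.e.\ together with Theorem \ref{thm:HL} yields the single-weight inequality.

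The passage to mixed norms proceeds by iterating the Rubio de Francia extrapolation theorem in its pair form, applied to the pair $(|f|, f^{\sharp})$. I first specialize $W$ to weights of the form $W(t,x)=w(x)$ with $w \in A_{p_0}(\R^d_+)$; such weights belong to the parabolic class $A_{p_0}(\R^{d+1}_+)$ with constant controlled by $[w]_{A_{p_0}(\R^d_+)}$, since the cylinders \eqref{eq:paraboliccylynder} are products in $t$ and $x$ and the weight is independent of $t$. Extrapolation applied to the pair $(|f|, f^{\sharp})$ in the $x$-variable, carried out for almost every fixed $t$, then promotes the inner index from $p_0$ to every $q \in (1,\infty)$ and the inner weight from $A_{p_0}(\R^d_+)$ to every $w \in A_q(\R^d_+)$, leaving the outer $t$-integral in $L_{p_0}$. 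A second application of extrapolation, now in the $t$-variable with weights of the form $V(t,x)=v(t)$, treats the previously obtained $L_q(\R^d_+,w)$-norm as a single scalar quantity in $t$ and upgrades the outer index and weight to arbitrary $p \in (1,\infty)$ and $v \in A_p(\R)$.

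The main obstacle is the bookkeeping: one must verify that one-variable Muckenhoupt weights $v(t)$ and $w(x)$, when injected as functions on $\R^{d+1}_+$, are genuine parabolic $A_p$-weights with constants depending only on their one-dimensional $A_p$-constants, and that the extrapolation pair $(|f|, f^{\sharp})$ transfers cleanly through the two iterated extrapolations without loss of the sharp-function structure (which is itself defined via parabolic cylinders involving \emph{both} variables). Once these points are organized, the final constant $C=C(d,p,q,[v]_p,[w]_q)$ in the statement emerges directly from the quantitative form of the extrapolation theorem as presented in \cite[Chapter IV]{GarciaRubio}.
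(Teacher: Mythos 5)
Your overall strategy --- a single-weight parabolic Fefferman--Stein estimate obtained by a good-$\lambda$ argument, followed by Rubio de Francia extrapolation to reach the weighted mixed norm --- is the route of the cited source: the paper does not prove this statement itself but quotes Corollary 2.7 of \cite{DK16}, whose proof is based on extrapolation, so in spirit you are reconstructing that argument. Your first step (the good-$\lambda$ comparison of $\mathcal{M}f$ and $f^{\sharp}$ with respect to the $W$-weighted measure, absorption for small $\gamma$, and the use of $|f|\le \mathcal{M}f$ together with Theorem \ref{thm:HL}) is sound.

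The gap is in the passage to mixed norms. Extrapolation ``in the $x$-variable, carried out for almost every fixed $t$'' is not a legitimate application of the extrapolation theorem: extrapolation needs as hypothesis a family of weighted inequalities for the pair in question, and there is no fixed-$t$ inequality between $|f(t,\cdot)|$ and $f^{\sharp}(t,\cdot)$ to extrapolate from --- the sharp function is built from parabolic cylinders coupling $t$ and $x$, and the only available input is the space-time inequality on $\R^{d+1}_{+}$ with weights $W(t,x)$. Moreover, even granting that step, its output (inner norm $L_{q}(\R^{d}_{+},w)$, outer norm $L_{p_0}(\R,dt)$ with Lebesgue measure) is not the hypothesis required by your second, one-dimensional extrapolation in $t$: that would need the estimate for all $v_0\in A_{p_0}(\R)$, with constant controlled by $[v_0]_{p_0}$ and $[w]_{q}$, not only for $v_0\equiv 1$. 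The correct bridge is the mixed-norm weighted extrapolation theorem of \cite{DK16} (their Theorem 2.5): starting from the estimate in $L_{q}(\R^{d+1}_{+},W)$ for all parabolic $A_{q}$ weights $W$ (which you do get from the single-weight case by classical extrapolation), one treats the cases $p>q$, $p=q$, $p<q$ separately, expresses the outer norm by duality in $t$, and runs the Rubio de Francia algorithm in the time variable to replace the dual function by a majorant $h(t)$ such that the product $h(t)w(x)$ is a parabolic $A_{q}(\R^{d+1}_{+})$ weight with controlled constant (here the product structure of the cylinders \eqref{eq:paraboliccylynder}, which you correctly noted, is what makes the product weight admissible). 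Without this duality-plus-algorithm step, or an explicit appeal to the extrapolation theorem in its mixed-norm form, the two iterated extrapolations you describe do not compose, so as written the proof is incomplete.
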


\subsection{Function spaces and notation}\label{subsec:fsn}
In this section we introduce some function spaces and notation to be use throughout the paper.

We denote $D=-i(\partial_i,\ldots,\partial_d)$ and we consider the standard multi-index notation $D^{\alpha}=D_1^{\alpha_1}\cdot\ldots\cdot D_{d}^{\alpha_d}$ and $|\alpha|=\alpha_1+\cdots+\alpha_d$ for a multi-index $\alpha=(\alpha_1,\ldots,\alpha_d)\in\N_0^d$.

Denote
$$
\R^{d}_{+}=\big\{x=(x_1,x')\in\R^d: x_1> 0,\ x'\in\R^{d-1} \big\} \quad \text{and}\quad \R^{d+1}_{+}=\R\times\R^{d}_{+}.
$$
The parabolic distance between $X=(t,x)$ and $Y=(s,y)$ in $\R^{d+1}_{+}$ is defined by $\rho(X,Y)=|x-y|+|t-s|^{\frac{1}{2m}}$.
For a function $f$ on $\mathcal{D}\subset\R^{d+1}_{+}$, we set
\[
(f)_{\mathcal{D}}=\frac{1}{|\mathcal{D}|}\int_{\mathcal{D}}f(t,x)\, dx\,dt=\dashint_{\mathcal{D}}f(t,x)\, dx\,dt.
\]
Let $Q_r^{+}(t,x)$ be a parabolic cylinder as in \eqref{eq:paraboliccylynder}. We define the mean oscillation of $f$ on a parabolic cylinder as
\[
\osc(f,Q_r^{+}(t,x)):=\dashint_{Q_r^{+}(t,x)}
\big|f(s,y)-(f)_{Q_r^{+}(t,x)}\big|\,ds\,dy
\]
and we denote for $R\in(0,\infty)$,
\[
(f)^{\sharp}_{R}:=\sup_{(t,x)\in\R^{d+1}}\sup_{r\le R}\osc(f,Q_r^{+}(t,x)).
\]
Next, we introduce the function spaces which will be used in the paper.
For $p\in (1,\infty)$ and $k\in\N_0$, we define the standard Sobolev space as
$$
W^{k}_{p}(\R^{d}_{+})=\big\{u\in L_{p}(\R^{d}_{+}):\ D^{\alpha}u\in L_{p}(\R^{d}_{+})\quad \forall |\alpha|\le k \big\}.
$$
For $p,q\in(1,\infty)$, we denote
\[
L_{p}(\R^{d+1}_{+})=L_{p}(\R;L_{p}(\R^{d}_{+}))
\]
and mixed-norm spaces
\[
L_{p,q}(\R^{d+1}_{+})=L_{p}(\R;L_{q}(\R^{d}_{+})).
\]
For parabolic equations we denote for $k=1,2,\ldots$,
\[W^{1,k}_{p}(\R^{d+1}_{+})=W^{1}_{p}(\R;L_{p}(\R^{d}_{+}))\cap L_{p}(\R;W^{k}_{p}(\R^{d}_{+}))\]
and mixed-norm spaces
\[
W^{1,k}_{p,q}(\R^{d+1}_{+})=W^{1}_{p}(\R;L_{q}(\R^{d}_{+}))\cap L_{p}(\R;W^{k}_{q}(\R^{d}_{+})).
\]
	
We will use the following weighted Sobolev spaces. For $v\in A_{p}(\R)$ and $w\in A_q(\R^{d}_{+})$, we denote
\[
L_{p,q,v,w}(\R^{d+1}_{+})=L_{p}(\R,v;L_{q}(\R^{d}_{+},w))
\]
and
\[
W^{1,k}_{p,q,v,w}(\R^{d+1}_{+})=W^{1}_{p}(\R,v;L_{q}(\R^{d}_{+},w))\cap L_{p}(\R,v;W^{k}_{q}(\R^{d}_{+},w)),
\]
where by $f\in L_{p,q,v,w}(\R^{d+1}_{+})$ we mean
\[
\|f\|_{L_{p,q,v,w}(\R^{d+1}_{+})}:=	\bigg(\int_{\R}\bigg(\int_{\R^{d}_{+}}|f(t,x)|^{q}w(x)\,dx\bigg)^{p/q}v(t)\,dt\bigg)^{1/p}<\infty.
\]
\subsection{Interpolation and trace}
The following function spaces from the interpolation theory will be needed. For more information and proofs we refer the reader to \cite{MeyThesis,Tr92,Tr1}.
	
For $p\in (1,\infty)$ and $s=[s]+s_{\ast}\in \R_+\backslash \N_0$, where $[s]\in\N_0$, $s_{\ast}\in(0,1)$,  we define the Slobodetskii space $W^{s}_{p}$ by real interpolation as
\[
W^{s}_{p}=(W^{[s]}_{p},W^{[s]+1}_{p})_{s_{\ast},p}.
\]
For $m\in\N$ and $s\in (0,1]$ we consider anisotropic spaces of the form
\[
W^{s,2ms}_{p}(\R^{d+1}_{+})=W^{s}_{p}(\R;L_{p}(\R^{d}_{+}))\cap L_{p}(\R;W^{2ms}_{p}(\R^{d}_{+})).
\]

For $p\in (1,\infty)$, $q\in [1,\infty]$, $r\in\R$, and $X$ a Banach space, we introduce the Besov space $\B^{r}_{p,q}(\R^d)$ and the $X$-valued Triebel--Lizorkin space $F^{r}_{p,q}(\R^d,X)$ as defined below.

Let $\Phi(\R^d)$ be the set of all sequences $(\varphi_k)_{k\geq 0}\subset\mathcal{S}(\R^d)$ such that
\[
\widehat{\varphi}_0=\widehat{\varphi},\quad \widehat{\varphi}_1(\xi)=\widehat{\varphi}(\xi/2)-\widehat{\varphi}(\xi),\quad \widehat{\varphi}_k(\xi)=\widehat{\varphi}_{1}(2^{-k+1}\xi),
\]
where $k\geq 2$, $\xi\in\R^d$, and where the Fourier transform $\widehat{\varphi}$ of the generating function $\varphi\in\mathcal{S}(\R^d)$ satisfies $0\le \widehat{\varphi}(\xi)\le 1$ for $\xi\in\R^d$ and
\[
\widehat{\varphi}(\xi)=1\quad {\rm if}\ |\xi|\le 1,\quad  \widehat{\varphi}(\xi)=0\quad {\rm if}\ |\xi|\geq\frac{3}{2}.
\]
	
\begin{definition} Given $(\varphi_k)_{k\geq 0}\in \Phi(\R^d)$, we define the \textit{Besov space} as
\[
\B^{r}_{p,q}(\R^d)=\big\{ f\in \mathcal{S}'(\R^d):\ \|f\|_{\B^{r}_{p,q}(\R^d)}:=\|(2^{kr}\mathcal{F}^{-1}(\widehat{\varphi}_{k}\hat{f}))_{k\geq 0}\|_{\ell_q(L_{p}(\R^d))}  <\infty  \big\},
\]
and the \textit{$X$-valued Triebel--Lizorkin space} as
\begin{align*}
&F^{r}_{p,q}(\R^d,X)\\
&=\big\{ f\in \mathcal{S}'(\R^d,X):\ \|f\|_{F^{r}_{p,q}(\R^d,X)}:=\|(2^{kr}\mathcal{F}^{-1}(\widehat{\varphi}_{k}\hat{f}))_{k\geq 0}\|_{L_{p}(\R^d,\ell_q(X))} <\infty  \big\}.
\end{align*}
\end{definition}

Observe that $\B^{r}_{p,p}(\R^d)=F^{r}_{p,p}(\R^d)$ by Fubini's Theorem. Moreover, we have the following equivalent definition of Slobodetskii space
\[
W^{s}_{p}(\R^d)=
\begin{dcases}
W^{k}_{p}(\R^d), & s=k\in\N\\
\B^{s}_{p,p}(\R^d), & s\in \R_+\backslash \N_0.
\end{dcases}
\]
Later on we will consider $X$-valued Triebel-Lizorkin spaces on an interval $(-\infty,T)\subset\R$. We define these spaces by restriction.
\begin{definition}
Let $T\in(-\infty,\infty]$ and let $X$ be a Banach space. For $p\in(1,\infty)$, $q\in[1,\infty)$ and $r\in\R$ we denote by $F^{r}_{p,q}((-\infty,T);X)$ the collection of all restrictions of elements of $F^{r}_{p,q}(\R;X)$ on $(-\infty,T)$. If $f\in F^{r}_{p,q}((-\infty,T);X)$ then
$$
\|f\|_{F^{r}_{p,q}((-\infty,T);X)}=\inf\|g\|_{F^{r}_{p,q}(\R;X)}
$$
where the infimum is taken over all $g\in F^{r}_{p,q}(\R;X)$ whose restriction on $(-\infty,T)$ coincides with $f$.
\end{definition}		

The following spatial traces and interpolation inequalities will be needed in our proofs. For full details, we refer the reader respectively to \cite[Lemma 3.5 and Lemma 3.10]{DHP07}. See also \cite[Lemma 1.3.11 and Lemma 1.3.13]{MeyThesis}.

\begin{theorem}\label{thm:MeyLemma1.3.11}
Let $p\in(1,\infty)$, $m\in\N$, and $s\in (0,1]$ so that $2ms\in\N$. Then the map
\[
{\rm tr}_{x_1=0}:W^{s,2ms}_{p}(\R^{d+1}_{+})\hookrightarrow W^{s-\frac{1}{2mp},2ms-\frac{1}{p}}_{p}(\R\times\R^{d-1})
\]
is continuous.
\end{theorem}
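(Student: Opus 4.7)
The plan is to reduce to the full-space case via reflection in $x_{1}$ and then apply an anisotropic trace estimate in $\R^{d+1}$. First, I construct a Seeley-type extension operator $\mathcal{E}\colon W^{s,2ms}_{p}(\R^{d+1}_{+}) \to W^{s,2ms}_{p}(\R^{d+1})$ that acts only in the $x_{1}$-variable: for $x_{1}<0$ set $\mathcal{E} u(t,x_{1},x') = \sum_{k=1}^{N} c_{k}\, u(t, -\lambda_{k} x_{1}, x')$, where $N = \lceil 2ms \rceil + 1$, the $\lambda_{k}>0$ are distinct, and the coefficients $c_{k}$ are determined by matching normal derivatives up to order $N$ at $x_{1}=0$. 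Because the reflection commutes with $\partial_{t}$ and with tangential derivatives $\partial_{x'}$, the operator $\mathcal{E}$ is bounded on each factor $W^{s}_{p}(\R;L_{p}(\R^{d}))$ and $L_{p}(\R;W^{2ms}_{p}(\R^{d}))$, hence on the intersection $W^{s,2ms}_{p}$. This reduces the problem to the corresponding trace estimate on $\R^{d+1}$.

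In full space, I split the target norm as the intersection
\[
W^{s-1/(2mp)}_{p}(\R;L_{p}(\R^{d-1})) \cap L_{p}(\R;W^{2ms-1/p}_{p}(\R^{d-1}))
\]
and estimate each factor separately. The spatial factor follows from the classical trace $W^{2ms}_{p}(\R^{d}) \to W^{2ms-1/p}_{p}(\R^{d-1})$ applied pointwise in $t$ (valid since $2ms \ge 1 > 1/p$), then raised to the $p$-th power and integrated in $t$ by Fubini. For the temporal factor, I would perform an anisotropic Littlewood--Paley decomposition of $\mathcal{E}u$ into dyadic blocks $\mathcal{E}u_{j}$ localized in the anisotropic frequency norm $|\tau|^{1/(2m)} + |\xi|$, under which $s$ units of time regularity correspond to $2ms$ units of space. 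A vector-valued Fourier-multiplier estimate on each block yields the pointwise trace bound
\[
\| {\tr}_{x_{1}=0}\mathcal{E}u_{j} \|_{L_{p}(\R \times \R^{d-1})} \le C\, 2^{j/p}\|\mathcal{E}u_{j}\|_{L_{p}(\R^{d+1})},
\]
reflecting the universal loss of $1/p$ along $x_{1}$. Summing the dyadic shells with the appropriate anisotropic weights and invoking the Littlewood--Paley $L_{p}$-equivalence in $t$ produces the bound in $W^{s-1/(2mp)}_{p}(\R;L_{p}(\R^{d-1}))$.

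The main obstacle is executing the Fourier-multiplier step cleanly for $p\neq 2$: Plancherel is unavailable, and one must rely on vector-valued Mikhlin-type multiplier theorems together with the UMD property of $L_{p}(\R^{d-1})$ and the anisotropic Littlewood--Paley equivalence in $L_{p}$. In practice I would bypass this machinery by invoking directly the general trace theorem for anisotropic Besov and Triebel--Lizorkin spaces as in Triebel \cite{Tr92}, which encapsulates precisely this analysis: under the parabolic scaling $|\tau|^{1/(2m)} \sim |\xi|$ the space $W^{s,2ms}_{p}(\R^{d+1})$ embeds into an anisotropic Triebel--Lizorkin space whose built-in trace theorem yields the stated loss of $1/p$ in space and $1/(2mp)$ in time, giving the claimed continuous embedding.
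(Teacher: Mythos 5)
The paper does not prove this statement; it cites \cite[Lemma 3.5]{DHP07} and \cite[Lemma 1.3.11]{MeyThesis}, whose proofs (echoed in this paper's own proof of Theorem~\ref{thm:spacialtracepq}) run through the fractional power $B=(\partial_t)^{1/(2m)}$: one first handles the spatial factor by a pointwise-in-time application of the classical one-variable trace theorem, and then handles the temporal factor by observing that $B^{2ms-1}u$ lies in $W^{1/(2m)}_p(\R;L_p(\R^d_+))\cap L_p(\R;W^1_p(\R_+;L_p(\R^{d-1})))$, whose $x_1=0$ trace can be computed essentially by a one-dimensional argument, yielding the time regularity $1/(2m)-1/(2mp)$. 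Your route is genuinely different: you extend by a Seeley reflection in $x_1$ and then invoke (or re-prove via anisotropic Littlewood--Paley) the trace theorem for anisotropic function spaces. The extension step is sound since the reflection commutes with $\partial_t$ and $\partial_{x'}$ and hence preserves both norms in the intersection, and your split of the target space into $W^{s-1/(2mp)}_p(\R;L_p(\R^{d-1}))\cap L_p(\R;W^{2ms-1/p}_p(\R^{d-1}))$ is exactly the definition in Section~\ref{subsec:fsn}. The spatial factor by pointwise trace plus Fubini is the same calculation the paper makes. The benefit of your approach is conceptual uniformity (everything becomes an anisotropic frequency-localization argument); the benefit of the operator-theoretic route is that it adapts smoothly to the mixed $p\neq q$ setting of Theorem~\ref{thm:spacialtracepq}, where the Littlewood--Paley equivalence in $t$ with $L_q$-valued blocks becomes delicate.

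Two cautions on the middle paragraph of your sketch. First, the dyadic block bound $\|{\tr}_{x_1=0}\mathcal{E}u_j\|_{L_p}\lesssim 2^{j/p}\|\mathcal{E}u_j\|_{L_p}$ requires only that the $\xi_1$-support be contained in $\{|\xi_1|\lesssim 2^j\}$, which the anisotropic shell $|\tau|^{1/(2m)}+|\xi|\sim 2^j$ does provide, so the Bernstein-type trace estimate is fine; but the anisotropic shells also constrain $\xi'$, which is irrelevant to the $W^{s-1/(2mp)}_p(\R;L_p(\R^{d-1}))$ target. To make the dyadic summation land in the right norm you would need to decompose only in $(\tau,\xi_1)$ (or explain how to discard the $\xi'$-localization), otherwise the square-function you sum does not match the target norm. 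Second, identifying $W^{s,2ms}_p(\R^{d+1})$, defined here as an intersection, with an anisotropic Besov or Triebel--Lizorkin space to which Triebel's trace theorem applies is itself a nontrivial step; it holds for $s\in(0,1)$ ($=B^{(s,2ms)}_{p,p}$ up to equivalent norms) and for $s=1$ (integer anisotropic Sobolev), but you should state and cite this identification rather than leave it implicit, since that identification is doing most of the work in your final paragraph.
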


\begin{lemma}\label{lemma:MeyLemma1.3.13}
Let $p\in(1,\infty)$ and let $m\in\N$ and $s\in[0,1)$ be given. Then for every $\varepsilon>0$, for $\beta\in\N_0^n$ with $s+\frac{|\beta|}{2m}+\frac{1}{2mp}<1$, it holds that for $u\in W^{1,2m}_{p}(\R\times\R^d_+)$,
\[
\|{\rm tr}_{\Omega}\nabla^{\beta}u\|_{W^{s,2ms}_{p}(\R\times\R^{d-1})}\le \varepsilon\|D^{2m}u\|_{L_{p}(\R\times\R^d_+)}+\varepsilon\|u_{t}\|_{L_{p}(\R\times\R^d_+)}+C_{\varepsilon}\|u\|_{L_{p}(\R\times\R^d_+)}.
\]
\end{lemma}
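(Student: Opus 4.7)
The plan is to combine the anisotropic trace theorem (Theorem~\ref{thm:MeyLemma1.3.11}) with a standard interpolation inequality between $L_p(\R\times\R^d_+)$ and $W^{1,2m}_p(\R\times\R^d_+)$. Set
\[
\theta:=s+\frac{|\beta|}{2m}+\frac{1}{2mp},
\]
so that by hypothesis $\theta\in(0,1)$.

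\textbf{Step 1 (trace estimate).} First I would apply the anisotropic trace theorem to $\nabla^\beta u$: the trace operator maps $W^{s+\frac{1}{2mp},\,2ms+\frac{1}{p}}_{p}(\R\times\R^d_+)$ continuously into $W^{s,2ms}_{p}(\R\times\R^{d-1})$. Since each spatial derivative costs one unit of spatial smoothness, this yields
\[
\|\mathrm{tr}_{\Omega}\nabla^{\beta}u\|_{W^{s,2ms}_{p}(\R\times\R^{d-1})}
\le C\,\|u\|_{W^{s+\frac{1}{2mp},\,2ms+\frac{1}{p}+|\beta|}_{p}(\R\times\R^d_+)}
= C\,\|u\|_{W^{\theta,2m\theta}_{p}(\R\times\R^d_+)},
\]
where the anisotropic balance $2m\cdot(s+\tfrac{1}{2mp})=2ms+\tfrac{1}{p}$ is used to recognize the right-hand target as a parabolic Slobodetskii space with the single parameter $\theta$.

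\textbf{Step 2 (interpolation).} Next I would use the standard real-interpolation identity
\[
W^{\theta,2m\theta}_{p}(\R\times\R^d_+) = \bigl(L_p(\R\times\R^d_+),\,W^{1,2m}_p(\R\times\R^d_+)\bigr)_{\theta,p},
\]
which gives the multiplicative estimate
\[
\|u\|_{W^{\theta,2m\theta}_{p}(\R\times\R^d_+)}
\le C\,\|u\|_{W^{1,2m}_{p}(\R\times\R^d_+)}^{\theta}\,\|u\|_{L_p(\R\times\R^d_+)}^{1-\theta}.
\]
An application of Young's inequality with parameter $\varepsilon$ then converts this into
\[
\|u\|_{W^{\theta,2m\theta}_{p}(\R\times\R^d_+)}
\le \varepsilon\,\|u\|_{W^{1,2m}_{p}(\R\times\R^d_+)} + C_{\varepsilon}\,\|u\|_{L_p(\R\times\R^d_+)},
\]
and bounding $\|u\|_{W^{1,2m}_{p}}$ by $\|u_t\|_{L_p}+\|D^{2m}u\|_{L_p}+\|u\|_{L_p}$ (absorbing the lower-order term into $C_\varepsilon\|u\|_{L_p}$) gives exactly the claimed inequality.

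\textbf{Main obstacle.} The only delicate point is the interpolation identity above on the half-space $\R\times\R^d_+$ rather than on the full $\R^{d+1}$, and the corresponding anisotropic trace theorem for noninteger indices (the statement of Theorem~\ref{thm:MeyLemma1.3.11} is phrased for $2ms\in\N$). Both are handled by a universal Stein-type extension operator $E: W^{1,2m}_p(\R\times\R^d_+)\to W^{1,2m}_p(\R^{d+1})$ that is simultaneously bounded on $L_p$: composing with $E$ reduces the interpolation and trace statements to their full-space versions, where they are standard. Everything else is bookkeeping of the parabolic scaling $(\text{time})\sim(\text{space})^{2m}$.
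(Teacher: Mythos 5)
Your overall strategy — bound the trace by a parabolic Slobodetskii norm with a sub-unit index $\theta$, then interpolate that norm between $L_p$ and $W^{1,2m}_p$ and apply Young's inequality — is exactly the route taken in the references the paper cites for this lemma (Denk--Hieber--Pr\"uss \cite{DHP07} and Meyries' thesis \cite{MeyThesis}); the paper itself does not reproduce a proof. Step 2 and the Young's-inequality conversion are fine, and you correctly identify the right exponent $\theta=s+\tfrac{|\beta|}{2m}+\tfrac{1}{2mp}$ and the half-space/extension subtlety.

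However, the justification of Step 1 has a genuine gap. You argue that ``each spatial derivative costs one unit of spatial smoothness,'' which would give
\[
\|\nabla^\beta u\|_{W^{s+\frac{1}{2mp},\,2ms+\frac1p}_{p}}
\le C\,\|u\|_{W^{s+\frac{1}{2mp},\,2ms+\frac1p+|\beta|}_{p}},
\]
and then identify the right-hand space with $W^{\theta,2m\theta}_p$. Neither step is correct as stated. First, $W^{s+\frac{1}{2mp},\,2ms+\frac1p+|\beta|}_{p}\neq W^{\theta,2m\theta}_p$: the two have the same spatial index but the former has temporal index $s+\tfrac{1}{2mp}$ while $\theta$ is strictly larger by $\tfrac{|\beta|}{2m}$, so the former is \emph{not} a balanced parabolic space. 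Second, and more seriously, the displayed inequality is false: the time component $W^{s+\frac{1}{2mp}}_{p}(\R;L_p)$ is not preserved by $\nabla^\beta$, since $\nabla^\beta$ destroys membership of the $L_p(\R^d_+)$ fibre. What is needed is the \emph{mixed-derivative theorem} (Newton-polygon characterisation of the anisotropic intersection space), which gives precisely the embedding
\[
W^{\theta}_{p}(\R;L_p)\cap L_p(\R;W^{2m\theta}_{p})
\;\hookrightarrow\;
W^{\theta-\frac{|\beta|}{2m}}_{p}\bigl(\R;W^{|\beta|}_{p}\bigr)
=W^{s+\frac{1}{2mp}}_{p}\bigl(\R;W^{|\beta|}_{p}\bigr),
\]
from which $\nabla^\beta u\in W^{s+\frac{1}{2mp},\,2ms+\frac1p}_{p}$ with the correct bound follows. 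Once this ingredient is supplied, the rest of your argument goes through and agrees with the cited sources; as written, though, Step 1 conflates two different anisotropic spaces and the key embedding is neither stated nor justified.
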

	
The following results for $p,q\in(1,\infty)$ will be important tools in the proof of Theorem \ref{thm:VMOproblemLS}.
	
\begin{theorem}\label{thm:spacialtracepq}
Let $p,q\in(1,\infty)$. Let for $j=1,\ldots,m$ and $m_j\in\{0,\ldots,2m-1\}$, $k_j=1-m_j/(2m) -1/(2mq)$. Then the  map
\begin{multline*}
{\rm tr}_{x_1=0}:W^{1-\frac{m_{j}}{2m}}_{p}(\R;L_q(\R^{d}_{+}))\cap L_p(\R;W^{2m-m_j}_{q}(\R^{d}_{+}))\\
\hookrightarrow F^{k_j}_{p,q}(\R;L_q(\R^{d-1}))\cap L_p(\R;\B^{2mk_j}_{q,q}(\R^{d-1}))
\end{multline*}
is continuous.
\end{theorem}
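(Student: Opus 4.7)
The statement splits along the intersection on the target: we must show that $\mathrm{tr}_{x_1=0}$ maps $Y := W^{1-m_j/(2m)}_p(\R; L_q(\R^d_+)) \cap L_p(\R; W^{2m-m_j}_q(\R^d_+))$ continuously into each of $L_p(\R; B^{2mk_j}_{q,q}(\R^{d-1}))$ and $F^{k_j}_{p,q}(\R; L_q(\R^{d-1}))$. The embedding into the former is the direct part and uses only the second factor of $Y$: since $2m - m_j$ is a positive integer and $q\in(1,\infty)$, the classical scalar trace theorem for the half-space gives, for a.e.\ $t \in \R$,
$$
\|\mathrm{tr}_{x_1=0}u(t,\cdot)\|_{B^{2m-m_j-1/q}_{q,q}(\R^{d-1})} \le C\|u(t,\cdot)\|_{W^{2m-m_j}_q(\R^d_+)},
$$
and the identity $2mk_j = 2m - m_j - 1/q$ together with taking the $L_p(\R)$-norm in $t$ yields the first bound.

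The harder half is the continuity into $F^{k_j}_{p,q}(\R; L_q(\R^{d-1}))$, for which the full intersection structure of $Y$ is essential. The plan is, first, to extend $u$ from $\R^{d+1}_+$ to $\R^{d+1}$ by a higher-order Hestenes-type reflection in $x_1$ preserving both constituent norms of $Y$; then to analyze $u|_{x_1=0}$ through a Littlewood-Paley decomposition $(\widehat\varphi_k)_{k \ge 0}$ in the time frequency $\tau$. The representation $u(\cdot,0,\cdot) = (2\pi)^{-1}\int \widehat u(\tau,\xi_1,\xi')\,d\xi_1$ recasts each block $\mathcal{F}^{-1}(\widehat\varphi_k\,\widehat{u|_{x_1=0}})$ as the action on $u$ of a Fourier multiplier whose symbol, once weighted by the $2^{kk_j}$ factor appearing in the definition of $F^{k_j}_{p,q}$, is dominated by the anisotropic weight $|\tau|^{1-m_j/(2m)} + |\xi|^{2m-m_j}$ that controls the $Y$-norm. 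Establishing $L_p(\R; L_q(\R^{d-1};\ell_q))$-boundedness of the resulting multipliers then reduces to the operator-valued Mikhlin theorem, which is applicable because $L_q(\R^{d-1};\ell_q)$ is a UMD space for $q\in(1,\infty)$.

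The principal obstacle is the mismatch $p \ne q$: in the scalar case $p = q$ one has $F^{k_j}_{p,p} = B^{k_j}_{p,p} = W^{k_j}_p$ and the assertion reduces to Theorem \ref{thm:MeyLemma1.3.11}, but for $p \ne q$ the Triebel-Lizorkin space cannot be recovered by real interpolation of the scalar result and its Littlewood-Paley characterization must be used directly. This is precisely the framework of \cite[Lemmas 3.5 and 3.10]{DHP07} and \cite[Chapter 1]{MeyThesis}, whose vector-valued arguments apply in exactly the setting here. If surjectivity of the trace is desired, a companion coretraction is constructed by applying the resolvent of $\partial_t + (-\Delta_{x'})^m$ to the boundary datum and multiplying by a suitable $x_1$-profile; this is not needed for the continuity assertion itself.
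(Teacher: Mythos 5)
Your treatment of the $L_p(\R;B^{2mk_j}_{q,q})$ component is exactly the paper's: take the scalar half-space trace $W^{2m-m_j}_q(\R^d_+)\to B^{2m-m_j-1/q}_{q,q}(\R^{d-1})$ pointwise in $t$, use $2mk_j = 2m-m_j-1/q$, and integrate in $L_p$. The paper even cites the same Triebel reference.

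For the Triebel--Lizorkin component your route is genuinely different from the paper's. The paper does not go through an extension-plus-Littlewood--Paley-plus-Mikhlin argument; instead it follows \cite[Proposition 6.4]{DHP07} and reduces the problem by introducing the fractional derivative $B=(\partial_t)^{1/(2m)}$ with $D(B)=W^{1/(2m)}_p(\R;L_q(\R^d_+))$ and considering $u_j:=B^{2m-m_j-1}u$. This places $u_j$ in the \emph{first-order} intersection $W^{1/(2m)}_p(\R;L_q(\R^d_+))\cap L_p(\R;W^1_q(\R_+;L_q(\R^{d-1})))$, for which the trace into $F^{1/(2m)-1/(2mq)}_{p,q}(\R;L_q(\R^{d-1}))$ is already established in DHP07; undoing the fractional power then yields $F^{k_j}_{p,q}$. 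This reduction avoids the part of your plan that is most delicate: the step where you read off the $\xi_1$-integration as a Mikhlin-admissible family of symbols in $(\tau,\xi')$ after weighting by $2^{k k_j}$. That step can be made to work, but asserting that the symbol is ``dominated by the anisotropic weight'' elides the actual estimate, which is the crux of the trace theorem; it is not an automatic consequence of $Y$-membership. Also note that your reference to \cite[Lemmas 3.5 and 3.10]{DHP07} is the $p=q$ version (this is exactly what Theorem~\ref{thm:MeyLemma1.3.11} and Lemma~\ref{lemma:MeyLemma1.3.13} package); for $p\neq q$ one needs the argument of \cite[Proposition 6.4]{DHP07}, which is the reference the paper actually uses and which your $B$-free multiplier approach would essentially have to reprove. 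In summary: same idea and same ultimate machinery (UMD-valued Fourier analysis), but the paper's reduction via $B=(\partial_t)^{1/(2m)}$ is both what the cited source proves and substantially shorter than a from-scratch Mikhlin analysis.
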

	
\begin{proof}
The proof is essentially contained in the proof of \cite[Proposition 6.4]{DHP07}, so we only give a sketched proof for the sake of completeness.
Let
\[u\in L_{p}(\R;W^{2m-m_{j}}_{q}(\R^{d}_{+})).\]
Taking traces in $x_1$ and applying \cite[Theorem 2.9.3]{Tr1} pointwise almost everywhere in time, we get  \[u|_{x_{1}=0} \in
L_{p}(\R;\B^{2m-m_{j}-\frac{1}{q}}_{q,q}(\R^{d-1})).\]
For the time regularity, let $u\in W^{1,2m}_{p,q}(\R\times\R^{d}_{+})$ and define $B$ as in \cite[Proposition 6.4]{DHP07} by
 $$B=(\partial_{t})^{\frac{1}{2m}}\quad {\rm with}\quad
D(B)=W^{\frac{1}{2m}}_{p}(\R;L_{q}(\R^{d}_{+})).
$$
Set $u_{j}=B^{2m-m_{j}-1}u$.
Then, $u_{j}\in W^{\frac{1}{2m}}_{p}(\R;L_{q}(\R^{d}_{+}))\cap L_{p}(\R;W^{1}_{q}(\R_{+};L_{q}(\R^{d-1})))$. Following the line of the proof of \cite[Proposition 6.4]{DHP07}, one can show that $u_{j}|_{x_1 =0}\in F^{\frac{1}{2m}-\frac{1}{2mq}}_{p,q}(\R;L_{q}(\R^{d-1}))$. This yields
\[D^{m_j}u|_{x_{1}=0}\in F^{k_j}_{p,q}(\R;L_{q}(\R^{d-1})),\]
which completes the proof.
\end{proof}

\begin{lemma}\label{lemma:interpolemmapq}
Let $p,q\in(1,\infty)$ and let $m\in\N$ and $s\in [0,1)$ be given. Then for every $\varepsilon>0$, for $\beta\in\N_0^n$ with $s+\frac{|\beta|}{2m}+\frac{1}{2mq}<1$, it holds that for $u\in W^{1,2m}_{p,q}(\R^{d+1}_{+})$,
\begin{multline*}
\|{\rm tr}_{\R^d_+}\nabla^{\beta}u\|_{F^{s}_{p,q}(\R;L_q(\R^{d-1}))\cap L_p(\R,v;\B^{2ms}_{q,q}(\R^{d-1}))}\\ \le\varepsilon\|D^{2m}u\|_{L_{p}(\R;L_{q}(\R^d_+))}+\varepsilon\|u_{t}\|_{L_{p}(\R;L_{q}(\R^d_+))}+C_{\varepsilon}\|u\|_{L_{p}(\R;L_{q}(\R^d_+))}.
\end{multline*}
\end{lemma}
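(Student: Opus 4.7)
\emph{Proof plan.} The strategy is to combine the trace theorem (Theorem~\ref{thm:spacialtracepq}) with two interpolation arguments, mirroring the $p=q$ case in Lemma~\ref{lemma:MeyLemma1.3.13}. First I set $m_j:=|\beta|$ and $k_j:=1-|\beta|/(2m)-1/(2mq)$, so that the hypothesis reads $s<k_j$. Applying Theorem~\ref{thm:spacialtracepq} with $\nabla^\beta u$ in place of $u$ gives
\begin{multline*}
\|{\rm tr}\,\nabla^\beta u\|_{F^{k_j}_{p,q}(\R;L_q)\cap L_p(\R;\B^{2mk_j}_{q,q})}\\
\le C\,\|\nabla^\beta u\|_{W^{1-|\beta|/(2m)}_p(\R;L_q)\cap L_p(\R;W^{2m-|\beta|}_q)}.
\end{multline*}
The spatial factor is immediately bounded by $\|D^{2m}u\|_{L_p(L_q)}$ since $|\beta|+(2m-|\beta|)=2m$; the fractional time factor is controlled by $\|u_t\|_{L_p(L_q)}+\|D^{2m}u\|_{L_p(L_q)}+\|u\|_{L_p(L_q)}$ via the anisotropic mixed-derivative embedding $W^{1,2m}_{p,q}\hookrightarrow W^{1-|\beta|/(2m)}_p(\R;W^{|\beta|}_q)$, a standard consequence of operator-valued Fourier multiplier theory in the UMD space $L_q$.

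Next, because $s<k_j$, I invoke the Gagliardo--Nirenberg-type interpolation inequality (obtained from real interpolation of the Triebel--Lizorkin scale followed by Young's inequality)
\[
\|f\|_{F^s_{p,q}(\R;L_q)}\le \delta\,\|f\|_{F^{k_j}_{p,q}(\R;L_q)}+C_\delta\,\|f\|_{L_p(\R;L_q)},
\]
and the analogous estimate for $\B^{2ms}_{q,q}$ between $\B^{2mk_j}_{q,q}$ and $L_q$, applied pointwise in $t$ and then raised to the $p$-th power. Taking $f={\rm tr}\,\nabla^\beta u$ and combining with the previous bound yields
\begin{multline*}
\|{\rm tr}\,\nabla^\beta u\|_{F^s_{p,q}(L_q)\cap L_p(\B^{2ms}_{q,q})}\\
\le C\delta\bigl(\|u_t\|_{L_p(L_q)}+\|D^{2m}u\|_{L_p(L_q)}+\|u\|_{L_p(L_q)}\bigr)+C'_\delta\,\|{\rm tr}\,\nabla^\beta u\|_{L_p(L_q(\R^{d-1}))}.
\end{multline*}

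To absorb the residual trace term, I apply the one-dimensional identity $|f(0,x')|^q\le q\int_0^\infty|f|^{q-1}|\partial_1 f|\,dx_1$ to $f=\nabla^\beta u(t,\cdot,x')$, followed by H\"older in $x'$, Young in the amplitude, and integration in $t$, obtaining
\[
\|{\rm tr}\,\nabla^\beta u\|_{L_p(L_q(\R^{d-1}))}\le \eta\,\|\nabla^{|\beta|+1}u\|_{L_p(L_q)}+C_\eta\,\|\nabla^\beta u\|_{L_p(L_q)}.
\]
The standard mixed-norm Gagliardo--Nirenberg interpolation $\|D^k u\|_{L_p(L_q)}\le \eta'\|D^{2m}u\|_{L_p(L_q)}+C_{\eta'}\|u\|_{L_p(L_q)}$, valid for $0\le k<2m$, then bounds the right-hand side in the required form, and choosing $\delta,\eta,\eta'$ sufficiently small completes the proof.

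The main obstacle I expect lies in the first step: verifying that Theorem~\ref{thm:spacialtracepq} applies to $\nabla^\beta u$ hinges on the anisotropic mixed-derivative embedding $W^{1,2m}_{p,q}\hookrightarrow W^{1-|\beta|/(2m)}_p(\R;W^{|\beta|}_q)$, which is a genuinely mixed-norm (not Fubini-reducible) fact depending on operator-valued Fourier multipliers and the UMD property of $L_q$. The endpoint case $s=0$ also deserves some extra care since $F^0_{p,q}(\R;L_q)$ does not equal $L_p(\R;L_q)$ in general when $q\neq 2$; however, in that borderline case the target norm reduces essentially to the $L_p(L_q)$-trace estimate that is already handled directly by the last step above.
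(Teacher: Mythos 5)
Your proposal is correct and fills in, with appropriate detail, exactly the argument the paper intends: the paper's proof is a one-line pointer to \cite[Lemma 3.10]{DHP07} together with Theorem \ref{thm:spacialtracepq}, and your route (apply Theorem \ref{thm:spacialtracepq} to $\nabla^\beta u$ with $m_j=|\beta|$, use the mixed-derivative embedding $W^{1,2m}_{p,q}\hookrightarrow W^{1-|\beta|/(2m)}_p(\R;W^{|\beta|}_q)$ to control the time-regularity factor, then peel $F^s\leftarrow F^{k_j}$ via a low/high frequency split and absorb the residual $L_p(L_q)$-trace term by a one-dimensional interpolation) is the natural way to carry out that sketch. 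The obstacle you single out — the genuinely mixed-norm embedding, resting on operator-valued $H^\infty$/Fourier multiplier theory in the UMD space $L_q$ — is indeed the one non-Fubini ingredient, and your peeling estimate in fact handles $s=0$ without separate treatment, so that caveat is unnecessary.
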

The proof follows the line of \cite[Lemma 3.10]{DHP07}, by considering $p\neq q$ there and applying Theorem \ref{thm:spacialtracepq}.

\subsection{Anisotropic Sobolev embedding theorem}
We will use the following parabolic Sobolev embedding theorem. Details about the proof can be found in \cite[Section 18.12]{BesovVolII}.
	
We denote
\begin{multline}
W^{k,2m,h}_{t,x_{1},x';p}(\R^{d+1}_{+})
=W^{k}_{p}(\R;L_{p}(\R^{d}_{+}))\cap L_{p}(\R;W^{2m}_{p}(\R_+;L_{p}(\R^{d-1})))\\
\cap L_p(\R;L_p(\R_+;W_{p}^{h}(\R^{d-1}))).
\nonumber\end{multline}
	
\begin{theorem}\label{them:parabSobImb}
Let $p\in(1,\infty)$ and $m\in\N$. Then it holds for $k,h$ sufficiently large that
\[
W^{k,2m,h}_{t,x_{1},x';p}(Q_1^+)\hookrightarrow C^{\frac{2m-1/p}{2m},2m-1/p}(Q_1^+).
\]
Moreover,
\[
\|u\|_{C^{\frac{2m-1/p}{2m},2m-1/p}(Q_1^+)}\le C\|u\|_{W^{k,2m,h}_{t,x_{1},x';p}(Q_1^+)},
\]
with $C>0$ independent of $u$.
\end{theorem}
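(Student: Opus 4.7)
The plan is to reduce the claim to the general anisotropic Sobolev embedding theorem for spaces of dominating mixed smoothness found in \cite[Section~18.12]{BesovVolII}. The space $W^{k,2m,h}_{t,x_1,x';p}(Q_1^+)$ controls $k$ pure $t$-derivatives, $2m$ pure $x_1$-derivatives, and $h$ pure $x'$-derivatives, each in $L_p$, and we seek an embedding into the parabolic Hölder space $C^{(2m-1/p)/(2m), 2m-1/p}(Q_1^+)$. The sharp spatial exponent $2m-1/p$ is precisely the output of the one-dimensional Sobolev embedding $W^{2m}_p(\R_+)\hookrightarrow C^{2m-1/p}(\R_+)$ (valid since $2m-1/p\notin\N$ whenever $p>1$), so the exponent in the $x_1$-direction cannot be improved; the freedom to enlarge $k$ and $h$ is what allows the matching Hölder exponents $(2m-1/p)/(2m)$ in $t$ and $2m-1/p$ in $x'$ to be attained.

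First, I would fix preliminary thresholds: choose $h\geq 2m+(d-2)/p$, so that the one-dimensional Sobolev embedding applied in the tangential variables gives $W^h_p\hookrightarrow C^{2m-1/p}$ on the unit ball in $\R^{d-1}$; and choose $k\geq 2$, so that $W^k_p\hookrightarrow C^{(2m-1/p)/(2m)}$ on bounded intervals of $\R$. Taking $k$ and $h$ strictly larger than these minimal thresholds will provide the margin needed in the combination step below.

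Next, I would use Fubini together with these one-dimensional embeddings to obtain slice Hölder estimates: for a.e.\ $(t,x')$, the function $u(t,\cdot,x')$ lies in $C^{2m-1/p}$; for a.e.\ $(t,x_1)$, the function $u(t,x_1,\cdot)$ lies in $C^{2m-1/p}$; and for a.e.\ $(x_1,x')$, the function $u(\cdot,x_1,x')$ lies in $C^{(2m-1/p)/(2m)}$. The joint modulus of continuity of $u$ on $Q_1^+$ is then recovered from the telescoping identity
\begin{equation*}
u(t,x)-u(s,y)=[u(t,x)-u(s,x)]+[u(s,x_1,x')-u(s,y_1,x')]+[u(s,y_1,x')-u(s,y_1,y')],
\end{equation*}
each difference being bounded by the appropriate slice Hölder seminorm along the corresponding one-dimensional line.

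The main obstacle is that these slice estimates hold only for a.e.\ configuration of the off-axis variables, whereas the telescoping argument demands uniform control along every slice. To bridge this gap, I would apply a secondary layer of Sobolev embedding in the auxiliary variables: taking $k$ and $h$ sufficiently large beyond the initial thresholds, the slice Sobolev spaces embed continuously into $L^\infty$ in the transverse variables, so that the slice Hölder seminorms are controlled uniformly rather than merely almost everywhere. With this uniform control in place, the telescoping argument yields the desired joint parabolic Hölder bound, with norm dominated by $\|u\|_{W^{k,2m,h}_{t,x_1,x';p}(Q_1^+)}$. The whole argument is subsumed by the general anisotropic embedding theorem for mixed-smoothness spaces on cylinders in \cite[Section~18.12]{BesovVolII}, which can be invoked directly to obtain both the embedding and the quantitative estimate.
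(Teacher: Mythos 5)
Your proposal ultimately defers to \cite[Section 18.12]{BesovVolII}, which is exactly what the paper does — the paper gives no independent proof, only this citation — so you are in agreement with the source. Your accompanying sketch points in the right direction (one-dimensional slice embeddings, telescoping, and a secondary embedding to upgrade a.e.\ slice control to uniform control), though it glosses over the genuine technical content of the cited theorem, namely that $W^{k,2m,h}_{t,x_1,x';p}$ controls only pure derivatives in each variable, so bounding the mixed derivatives needed for uniform slice estimates requires anisotropic interpolation, and since $2m-1/p>1$ the telescoping must be carried out for all intermediate derivatives, not just for $u$ itself.
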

	
\section{Assumptions and main results}\label{sec:assumptions}
In this section let $p,q\in (1,\infty)$, $m=1,2,\ldots$ and we consider a $2m$-th order elliptic differential operator $A$ given by
\[
Au=\sum_{|\alpha|\le 2m}a_{\alpha}(t,x)D^{\alpha}u,
\]
where $a_\alpha:\R\times\R^{d}_{+}\rightarrow\C$. For $j=1,\ldots,m$ and $m_j \in\{0,\ldots,2m-1\}$, we consider the boundary differential operators $B_j$ of order $m_j$ given by
\[
B_j u=\sum_{|\beta|= m_j}b_{j\beta }D^{\beta}u + \sum_{|\beta|< m_j}b_{j\beta}(t,x)D^{\beta}u,
\]
where $b_{j\beta}\in\C$ if $|\beta|=m_j$, and $b_{j\beta}:\R\times\R^{d}_{+}\rightarrow\C$ if $\beta|<m_j$.

We will give conditions on the operators $A$ and $B_j$ under which there holds $L_p(L_q)$-estimates for the solution to the parabolic problem
\begin{equation}\label{prob:VMOtimespaceLS}
\begin{dcases}
u_t(t,x) + (A+\lambda)u(t,x)=f(t,x) & {\rm in}\ \R\times\R^{d}_{+}\\
B_{j}u(t,x)\big|_{x_1=0}=g_j(t,x) & {\rm on}\ \R\times\R^{d-1}\ \  j=1,\ldots,m,
\end{dcases}
\end{equation}	
and to the elliptic problem
\begin{equation}\label{prob:ellVMOtimespaceLS}
\begin{dcases}
(A+\lambda)u=f & {\rm in}\quad \R^{d}_{+}\\
B_{j}u\big|_{x_1=0}=g_j & {\rm on}\quad \R^{d-1},\ \  j=1,\ldots,m,
\end{dcases}
\end{equation}
where, for the elliptic case, the coefficients of the operators and data involved are functions independent on $t\in\R$, i.e., defined on $\R^d_+$.
\subsection{Assumptions on $A$ and $B_j$.}
We first introduce a parameter--ellipticity condition in the sense of \cite[Definition 5.1]{DHP}. Here $A_{\sharp}(t,x,\xi)=\sum_{|\alpha|=2m}a_{\alpha}(t,x)\xi^{\alpha}$ denotes the \textit{principal symbol} of the operator $A$.
\let\ALTERWERTA\theenumi
\let\ALTERWERTB\labelenumi
\def\theenumi{(E)$_\theta$}
\def\labelenumi{\textbf{(E)}$_\theta$}
\begin{enumerate}
\item\label{as:condell}
Let $\theta\in(0,\pi)$. For all $t\in\R$, $x\in\R^d_+$ it holds that
\begin{equation*}
A_{\sharp}(t,x,\xi)\subset\Sigma_{\theta},\quad  \forall\ \xi\in\R^{n},\ |\xi|=1,
\end{equation*}
where $\Sigma_{\theta}=\{z\in\C\backslash\{0\}:\ |\arg(z)|<\theta\}$ and $\arg : \C\backslash\{0\} \rightarrow (-\pi,\pi]$.
\end{enumerate}
\let\theenumi\ALTERWERTA
\let\labelenumi\ALTERWERTB

The following \ref{as:LScond}-condition is a condition of Lopatinskii--Shapiro type. Before stating it, we need to introduce some notation.

Denote by
$$
A_{\sharp}(t,x,D):=\sum_{|\alpha|=2m}a_{\alpha}(t,x)D^{\alpha}\quad
\text{and}\quad
B_{j,\sharp}(D):=\sum_{|\beta|=m_{j}}b_{j\beta}D^{\beta}
$$
the \textit{principal part} of $A(t,x)$ and $B_{j}$ respectively. Let $t_0\in\R$ and $x_0$ be in a neighborhood of $\partial\R^{d+1}_{+}$ of width $2 R_0$,  i.e., $x_0\in B_{2 R_0}(x')\cap\R^{d}_{+}$ for some $x'\in\partial\R^{d}_{+}$, and consider the operator $A_{\sharp}(t_0,x_0,D)$.  Taking the Fourier transform $\mathcal{F}_{x'}$  with respect to $x'\in\R^{d-1}$ and letting $v(x_1,\xi):=\mathcal{F}_{x'}(u(x_1,\cdot))(\xi)$, we obtain
\begin{align*}
A_{\sharp}(t_0,x_0,\xi,D_{x_1})v
&:=\mathcal{F}_{x'}(A_{\sharp}(t_0,x_0,D)u(x_1,\cdot))(\xi)\\
&= \sum_{k=0}^{2m}\sum_{|\beta|=k}a_{(\beta,k)}(t_0,x_0)\xi^{\beta}D_{x_{1}}^{2m-k}v
\end{align*}
and
\[
B_{j,\sharp}(\xi,D_{x_1})v:=\mathcal{F}_{x'}(B_{j,\sharp}(D)u(x_1,\cdot))(\xi)
=\sum_{k=0}^{m_j}\sum_{|\gamma|=k}b_{(\gamma,k)j}\xi^{\gamma}D_{x_{1}}^{m_j-k}v.
\]
where we denote $D_{x_{1}}=-i\frac{\partial}{\partial x_{1}}$.

\let\ALTERWERTA\theenumi
\let\ALTERWERTB\labelenumi
\def\theenumi{(LS)$_\theta$}
\def\labelenumi{\textbf{(LS)$_\theta$}}
\begin{enumerate}
\item\label{as:LScond}
Let $\theta\in(0,\pi)$ and let $t_0$ and $x_0$ be as above. For each $(h_{1},\ldots,h_{m})^{T}\in\R^{m}$ and each $\xi\in\R^{d-1}$ and $\displaystyle \lambda\in\overline{\Sigma}_{\pi-\theta}$, such that $|\xi|+|\lambda|\neq 0$, the ODE problem in $\R_+$
\begin{equation}\label{prob:defLS}
\begin{dcases}
\lambda v+ A_{\sharp}(t_0,x_0,\xi,D_{x_1})v=0,\quad  x_1 >0,\\
B_{j,\sharp}(\xi,D_{x_1})v\big|_{x_1=0}=h_{j},\quad  j=1,\ldots,m
\end{dcases}
\end{equation}
admits a unique solution $v\in C^{\infty}(\R_+)$ such that $\lim_{x\rightarrow \infty}v(x)=0$.
\end{enumerate}
\let\theenumi\ALTERWERTA
\let\labelenumi\ALTERWERTB	

\begin{remark}
In contrast to the original definition of the (LS)$_{\theta}$--condition (as for instance in \cite{DHP}), here we assume $x_0$ to be in a neighborhood of the boundary $\partial\R^{d+1}_{+}$ instead on the boundary itself. This choice is suitable to the VMO assumption on the coefficients of the operator $A$, which will be introduced in assumption \ref{as:operatorALS} below. 	
\end{remark}
We now introduce a regularity condition on the leading coefficients, where $\rho$ is a parameter to be specified.
\begin{assumption}[$\rho$] \label{ass:VMO}
	There exist a constant $R_0\in (0,1]$ such that $(a_{\alpha})^{\sharp}_{R_0}\le \rho$.
\end{assumption}	

Throughout the paper, we impose the following assumptions on the coefficients of $A$ and $B_j$.
\let\ALTERWERTA\theenumi
\let\ALTERWERTB\labelenumi
\def\theenumi{(A)}
\def\labelenumi{\textbf{(A)}}
\begin{enumerate}
	\item\label{as:operatorALS} The coefficients $a_{\alpha}$ are functions
	$\R\times\R^{d}_{+}\rightarrow\C$
	and satisfy Assumption \ref{ass:VMO} ($\rho$) with a parameter $\rho\in (0,1)$ to be determined later. Moreover there exists a constant $K>0$ such that $\|a_{\alpha}\|_{ L_{\infty}}\le K$, $|\alpha|\le 2m$, and there exists $\theta\in(0,\frac{\pi}{2})$ such that $A$ satisfies condition \ref{as:condell} .
\end{enumerate}
\let\theenumi\ALTERWERTA
\let\labelenumi\ALTERWERTB

\let\ALTERWERTA\theenumi
\let\ALTERWERTB\labelenumi
\def\theenumi{(B)}
\def\labelenumi{\textbf{(B)}}
\begin{enumerate}
	\item\label{as:operatorBLS} For each $j=1,\ldots, m$, the coefficients $b_{j\beta}$ are such that
	\begin{equation*}
	\begin{dcases}
	b_{j\beta}\in\C & {\rm if}\ |\beta|=m_j,\\
	b_{j\beta}:\R\times\R^{d}_{+}\rightarrow\C & {\rm if}\ |\beta|<m_j,
	\end{dcases}
	\end{equation*}
	and for $|\beta| < m_j$,  $b_{j\beta}\in C^{1-\frac{m_j}{2m},2m-m_{j}}(\R^{d+1}_{+})$ and there exists $K>0$ such that
	$$
	\|b_{j\beta}\|_{C^{1-\frac{m_j}{2m},2m-m_{j}}}\le K.
	$$
\end{enumerate}

\begin{remark}$ $
The \ref{as:LScond}-condition is essentially of algebraic nature, as it can be reformulated as a condition on the roots of a homogeneous polynomial. For further details, we refer the reader to \cite{WlokaBook} and \cite{RoitbergBook}.
It is not difficult to verify this condition in applications. For instance, see \cite[Section 3]{DPZ08} or \cite[Section 5.2]{MeyThesis}.
\end{remark}
	
\begin{example}
(i) Assume $A$ has order $2m$ and $B_j=D_{x_1}^{j-1}$, $j=1,\ldots,m$. Then, the Dirichlet boundary condition $B_j u|_{x_1=0}=g_j$ on $\partial\R^{d}_{+}$ satisfies the \ref{as:LScond}-condition. We refer the reader to \cite[Section I.2]{ADN64} for the proof. We remark that the complementing condition in \cite{ADN64} is equivalent to the \ref{as:LScond}-condition.

(ii) Let $A=\sum_{|\alpha|=2}a_{\alpha}D^{\alpha}$, with $a_{\alpha}\in\C$ and let $B=\sum_{|\beta|=1}b_{\beta}D^{\beta}$ with $0\neq b_{(1,0,\ldots,0)}\in\C$.
Then the \ref{as:LScond}-condition is equivalent to the algebraic condition that for each $\xi\in\R^{d-1}$ and $\lambda\in\overline{\Sigma}_{\pi-\theta}$ such that $|\xi|+|\lambda|\neq 0$, the characteristic polynomial
$$
a_{0}(\xi)\mu^{2}+a_{1}(\xi)\mu+a_{0}(\xi)+\lambda=0
$$
of \eqref{prob:defLS},  has two distinct roots $\mu_{\pm}$ with ${\rm Im}\mu_{+}>0>{\rm Im}\mu_{-}$, where $a_{k}(\xi)=\sum_{|\alpha|=k}a_{(k,\alpha)}\xi^{\alpha}$. The proof follows the line of \cite[Section 7.4]{KW}.
\end{example}

We can now state our main result.
\begin{theorem}\label{thm:VMOproblemLS}
Let $T\in(-\infty,\infty]$, $p,q\in(1,\infty)$. Let $v\in A_{p}((-\infty,T))$ and $w\in A_{q}(\R^{d}_{+})$. There exists
$$
\rho=\rho(\theta,m,d,K,p,q,[v]_{p},[w]_{q},b_{j\beta})\in (0,1)
$$
such that under the assumptions \ref{as:operatorALS}, \ref{as:operatorBLS}, and \ref{as:LScond} for some $\theta\in(0,\pi/2)$, the following hold.
		
(i) Assume the lower-order terms of $B_j$ to be all zero and $g_j\equiv 0$, with $j=1,\ldots,m$. Then there exists $\lambda_0=\lambda_0(\theta,m,d,K,p,q,R_0,[v]_{p},[w]_{q},b_{j\beta})\geq 0$ such that for any $\lambda\geq\lambda_0$ and
$$
u\in W^{1}_{p}((-\infty,T),v;L_{q}(\R^d_+,w))\cap L_{p}((-\infty,T),v;W^{2m}_{q}(\R^d_+,w))
$$
satisfying \eqref{prob:VMOtimespaceLS} on $(-\infty, T)\times \R^d_+$, where $f\in L_p((-\infty,T),v;L_{q}(\R^d_+,w))$, it holds that
\begin{multline}\label{eq:VMOtimespaceLS}
\|u_t\|_{L_{p}((-\infty,T),v;L_{q}(\R^{d}_+,w))}+\sum_{|\alpha|\le 2m}\lambda^{1-\frac{|\alpha|}{2m}}\|D^{\alpha}u\|_{L_{p}((-\infty,T),v;L_{q}(\R^{d}_+,w))}\\
\le C\|f\|_{L_{p}((-\infty,T),v;L_{q}(\R^{d}_+,w))},
\end{multline}
where $C=C(\theta,m,d,K,p,q,[v]_{p},[w]_{q},b_{j\beta})>0$ is a constant.
		
(ii) Let $v=w=1$. Then there exists $\lambda_0=\lambda_0(\theta,m,d,K,p,q,R_0,b_{j\beta})\geq 0$ such that for any $\lambda\geq\lambda_0$ and
$$
u\in W^{1}_{p}((-\infty,T);L_{q}(\R^d_+))\cap L_{p}( (-\infty,T);W^{2m}_{q}(\R^d_+))
$$
satisfying \eqref{prob:VMOtimespaceLS} on $(-\infty, T)$, where $f\in L_p((-\infty,T);L_{q}(\R^d_+))$ and
$$
g_j\in F^{k_j}_{p,q}((-\infty,T);L_{q}(\R^{d-1}))\cap L_{p}((-\infty,T);\B_{q,q}^{2mk_j}(\R^{d-1}))
$$
with $k_j=1-m_j/(2mq)-1/(2mq)$, it holds that
\begin{multline}\label{eq:VMOtimespaceLSgj}
\|u_t\|_{L_{p}((-\infty,T);L_{q}(\R^{d}_+))}+\sum_{|\alpha|\le 2m}\lambda^{1-\frac{|\alpha|}{2m}}\|D^{\alpha}u\|_{L_{p}((-\infty,T)
;L_{q}(\R^{d}_+))}\\
\le C\|f\|_{L_{p}((-\infty,T);L_{q}(\R^{d}_+))} + C\sum_{j=1}^{m}\|g_j\|_{F^{k_j}_{p,q}((-\infty,T);L_{q}(\R^{d-1}))\cap L_{p}((-\infty,T);\B_{q,q}^{2mk_j}(\R^{d-1}))},
\end{multline}
where $C=C(\theta,m,d,K,p,q,b_{j\beta})>0$ is a constant.
\end{theorem}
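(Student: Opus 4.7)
The plan is to treat the two parts of the theorem separately. For part (i) I will use the standard Krylov--Dong--Kim machinery, combining the mean oscillation estimates of Section \ref{sec:moe} with the weighted Fefferman--Stein and Hardy--Littlewood theorems of Section \ref{subsectionDG:weight}. For part (ii) I will lift the inhomogeneous boundary data $g_j$ to a function in the solution space, reducing to (i) modulo a lower-order perturbation absorbed for $\lambda$ large.

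For part (i), I begin from Lemma \ref{lemma:moe2}, which after a localization and comparison with the frozen-coefficient solution provided by Lemma \ref{lemma:moe1} will yield, for some fixed $\nu\in(1,\min(p,q))$, a pointwise mean oscillation bound of the schematic form
\[
\osc(D^{2m}u,Q_r^+(t_0,x_0)) \le C\kappa^{-N}\bigl(\mathcal{M}(|f|^\nu) + \lambda^{2m}\mathcal{M}(|u|^\nu)\bigr)^{1/\nu}(t_0,x_0) + C\bigl(\kappa^{\gamma}+\rho^{\delta}\bigr)\mathcal{M}(|D^{2m}u|^\nu)^{1/\nu}(t_0,x_0)
\]
valid for all $r\le R_0$ and $(t_0,x_0)\in\R^{d+1}_+$, with exponents $N,\gamma,\delta>0$ depending only on the structural constants. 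I would then fix $\kappa$ small, and subsequently pick $\rho\in(0,1)$ small enough that $C(\kappa^{\gamma}+\rho^{\delta})$ falls below the threshold coming from the operator norms in Theorems \ref{thm:HL} and \ref{thm:ffst}; this specifies the $\rho$ in the statement. Applying the weighted sharp function theorem and the weighted maximal function theorem on the mixed-norm space $L_{p,q,v,w}$ (legitimate because $p/\nu,q/\nu>1$) absorbs $\mathcal{M}(|D^{2m}u|^\nu)^{1/\nu}$ on the left side. The term $\lambda^{2m}\mathcal{M}|u|^\nu$ and the intermediate derivatives $D^\alpha u$, $|\alpha|<2m$, are controlled by taking $\lambda\ge \lambda_0$ large and employing the usual $\lambda$-scaled interpolation, while $u_t$ is recovered directly from the equation. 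This yields \eqref{eq:VMOtimespaceLS}. A truncation to scales $r\le R_0/2$ combined with a scaling argument as in \cite{DK11} handles the discrepancy between the local VMO assumption and the global estimate.

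For part (ii), with $v=w\equiv 1$, I would lift the boundary data following the scheme of \cite[Proposition 6.4]{DHP07}. For each tuple $(g_1,\ldots,g_m)$ with $g_j\in F^{k_j}_{p,q}((-\infty,T);L_q(\R^{d-1}))\cap L_p((-\infty,T);\B^{2mk_j}_{q,q}(\R^{d-1}))$, construct an auxiliary $h\in W^{1,2m}_{p,q}$ satisfying $B_{j,\sharp}h|_{x_1=0}=g_j$ for every $j$ with the $\lambda$-weighted bound
\[
\|h_t\|_{L_{p,q}} + \sum_{|\alpha|\le 2m}\lambda^{1-|\alpha|/(2m)}\|D^{\alpha}h\|_{L_{p,q}} \le C\sum_{j=1}^{m}\|g_j\|_{F^{k_j}_{p,q}(L_q)\cap L_p(\B^{2mk_j}_{q,q})},
\]
obtained by inverting the constant-coefficient half-space boundary symbol whose invertibility is precisely the \ref{as:LScond}-condition. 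Setting $\tilde u = u-h$ produces an equation with right-hand side $\tilde f = f-(\partial_t+A+\lambda)h$ and remaining boundary data $-\sum_{|\beta|<m_j} b_{j\beta}D^{\beta}h|_{x_1=0}$. The latter is handled as a lower-order perturbation: by Lemma \ref{lemma:interpolemmapq} and the Hölder regularity of $b_{j\beta}$ from assumption \ref{as:operatorBLS}, its norm in the boundary data space is bounded by $\varepsilon(\|h_t\|_{L_{p,q}}+\|D^{2m}h\|_{L_{p,q}})+C_\varepsilon\|h\|_{L_{p,q}}$, and for $\lambda_0$ large a Neumann series/fixed-point argument closes the estimate. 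Applying part (i) to $\tilde u$ and combining with the bound on $h$ gives \eqref{eq:VMOtimespaceLSgj}; this is where the dependence of $\rho$ and $\lambda_0$ on the $b_{j\beta}$ enters.

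The principal obstacle is Lemma \ref{lemma:moe1}, the mean oscillation estimate for the frozen-coefficient homogeneous half-space problem under a general Lopatinskii--Shapiro boundary condition. As emphasized in the introduction, in the Dirichlet case one reflects the solution to reduce to a whole-space estimate, but this is unavailable here and must be replaced by a Poincaré-type inequality adapted to the ODE system \eqref{prob:defLS}, which is the main novelty. A secondary but nontrivial technical point is constructing the lifting $h$ with the explicit $\lambda$-dependence above, which requires the $\mathcal{R}$-bounded $\mathcal{H}^\infty$-calculus framework of \cite{DHP,DHP07}.
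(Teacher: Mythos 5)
Your overall strategy for part (i) — mean oscillation estimate from Lemma \ref{lemma:moe2}, then weighted Fefferman--Stein and Hardy--Littlewood, then absorption with $\lambda$ large — is the paper's strategy, but two technical points as you have written them would break the argument.

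First, your schematic mean-oscillation inequality has the wrong structure. The small factor must multiply the terms you intend to absorb, namely $\mathcal{M}(|D^\alpha u|^{\nu})^{1/\nu}$, and this comes from taking $\kappa$ \emph{large}, giving $\kappa^{-(1-1/p)}$ small in Lemma \ref{lemma:moe2}; a large constant $\kappa^{(d+2m)/p}$ on the $f$-term is harmless, and $\rho$ must then be chosen small to defeat the large $\kappa^{(d+2m)/p}$ prefactor on the $\rho^{1/(p\varsigma)}\mathcal{M}(|D^{2m}u|^{p_0\mu})^{1/(p_0\mu)}$ term. Taking $\kappa$ small, as you propose, makes $\kappa^{-N}$ blow up. Also there is no $\lambda^{2m}\mathcal{M}(|u|^\nu)$ term on the right: the $\lambda^{1-|\alpha|/(2m)}$ weights sit on the \emph{left} of the oscillation estimate and are precisely what makes the later absorption for $\lambda$ large possible. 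Second, and more seriously, "truncation to scales $r\le R_0/2$ combined with a scaling argument" does not handle the large-scale cylinders. Lemma \ref{lemma:moe2} only applies for $r\le R_0/\kappa$, and the scaling that reduces $r$ also shrinks the VMO radius, so you gain nothing. The paper's resolution is Lemma \ref{lemma:normspq} combined with Proposition \ref{prop:interpolation}: one first restricts to $u$ supported in a time interval of length $(R_0R_1)^{2m}$, where for $r>R_0/\kappa$ the oscillation is bounded directly by the support constraint (see \eqref{eq:lemmanormspq1}); one then recovers the general case by convolving against a partition of unity $\zeta(t-s)$ in the time variable only. This localization in $t$ is a key ingredient you omit.

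For part (ii) your lifting-$h$-plus-Neumann-series plan could be made to work, but the paper's decomposition is cleaner and avoids iteration: it solves a constant-coefficient problem for $u_1$ whose boundary data is $-\sum_{|\beta|<m_j}b_{j\beta}D^\beta u|_{x_1=0}+g_j$ (note this involves the unknown $u$), so that $u_2=u-u_1$ satisfies $\sum_{|\beta|=m_j}b_{j\beta}D^\beta u_2|_{x_1=0}=0$ exactly — the hypotheses of part (i) — in one step. The dependence on $u$ in $u_1$'s data is harmless in an a priori estimate; the lower-order boundary terms are then absorbed via Lemma \ref{lemma:interpolemmapq} for $\lambda$ large, exactly as you indicate. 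If you prefer your variant, you need to verify that the iteration converges, which the paper sidesteps entirely.
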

	
From the a priori estimates for the parabolic equation in Theorem \ref{thm:VMOproblemLS}, we obtain the a priori estimates for the higher-order elliptic equation as well, by using the arguments in \cite[Theorem 5.5]{DK16} and \cite[Theorem 2.6]{Kry07}. The key idea is that the solutions to elliptic equations can be viewed as steady state solutions to the corresponding parabolic cases.
	
We state below the elliptic version of Theorem \ref{thm:VMOproblemLS}. Here, the coefficients of $A$ and $B_j$ are now independent of $t$.

\begin{theorem}\label{thm:VMOellipticLS}
Let $q\in(1,\infty)$ and $w\in A_{q}(\R^{d}_{+})$. There exists
$$
\rho=\rho(\theta,m,d,K,q,[w]_{q})\in (0,1)
$$
such that under assumptions \ref{as:operatorALS}, \ref{as:operatorBLS}, and \ref{as:LScond} for some $\theta\in(0,\pi/2)$, the following hold.
		
(i) Assume the lower-order terms of $B_j$ to be all zero and consider homogeneous boundary conditions. Then, there exists $\lambda_0=\lambda_0(\theta,m,d,K,q,R_0,[v]_{q},b_{j\beta})\geq 0$ such that for any $\lambda\geq\lambda_0$ and $u\in W^{2m}_{q}(\R^d_+;w)$ satisfying \eqref{prob:ellVMOtimespaceLS}
where $f\in L_{q}(\R^d_+,w)$, it holds that
\begin{equation}\label{eq:VMOellipticLS}
\sum_{|\alpha|\le 2m}\lambda^{1-\frac{|\alpha|}{2m}}\|D^{\alpha}u\|_{L_{q}(\R^{d}_+,w)}\le C\|f\|_{L_{q}(\R^{d}_{+},w)},
\end{equation}
where $C=C(\theta,m,d,K,q,[w]_{q},b_{j\beta})>0$ is a constant.
		
(ii) Let $w=1$. Then there exists $\lambda_0=\lambda_0(\theta,m,d,K,q,R_0,b_{j\beta})\geq 0$ such that for any $\lambda\geq\lambda_0$ and $u\in W^{2m}_{q}(\R^d_+)$ satisfying
\begin{equation*}
\begin{dcases}
(A+\lambda)u=f & {\rm in}\ \R^{d}_{+}\\
B_{j}u\big|_{x_1=0}=g_j & {\rm on}\ \R^{d-1},
\end{dcases}
\end{equation*} where $f\in L_{q}(\R^d_+)$ and $g_j\in \B_{q,q}^{2mk_j}(\R^{d-1})$, with $k_j=1-m_j/(2m)-1/(2mq)$, it holds that
\begin{equation}\label{eq:VMOellipticLSgj}
\sum_{|\alpha|\le 2m}\lambda^{1-\frac{|\alpha|}{2m}}\|D^{\alpha}u\|_{L_{q}(\R^{d}_+)}\le C\|f\|_{L_{q}(\R^{d}_+)}+ C\sum_{j=1}^{m}\|g_j\|_{\B_{q,q}^{2mk_j}(\R^{d-1})},
\end{equation}
where $C=C(\theta,m,d,K,q,b_{j\beta})>0$ is a constant.
\end{theorem}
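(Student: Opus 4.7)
The plan is to derive Theorem~\ref{thm:VMOellipticLS} from the parabolic a priori estimates of Theorem~\ref{thm:VMOproblemLS} by the tensoring-in-time argument of \cite[Theorem~2.6]{Kry07} and \cite[Theorem~5.5]{DK16}. First I would view the time-independent coefficients of $A$ and $B_j$ as coefficients on $\R\times\R^d_+$ constant in $t$; Assumptions \ref{as:operatorALS}, \ref{as:operatorBLS}, \ref{as:condell} and \ref{as:LScond} then carry over unchanged, so the thresholds $\rho$ and $\lambda_0$ may be inherited from the parabolic theorem applied with $p=q$ and $v\equiv 1$.

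Next I would fix a nontrivial $\eta\in C_c^\infty(\R)$ and, for $n\in\N$, set $\eta_n(t):=\eta(t/n)$ and $U_n(t,x):=\eta_n(t)u(x)$. A direct computation shows
\[
\partial_tU_n+(A+\lambda)U_n=\eta_n f+\eta_n' u,\qquad B_j U_n\big|_{x_1=0}=\eta_n g_j,
\]
so Theorem~\ref{thm:VMOproblemLS} applied to $U_n$, together with Fubini to factor every mixed norm via the product structure, will give
\begin{multline*}
\|\eta_n\|_{L_p(\R)}\sum_{|\alpha|\le 2m}\lambda^{1-\frac{|\alpha|}{2m}}\|D^\alpha u\|_{L_q(\R^d_+,w)}\\
\le C\|\eta_n\|_{L_p}\|f\|_{L_q(\R^d_+,w)}+C\|\eta_n'\|_{L_p}\|u\|_{L_q(\R^d_+,w)}+C\sum_{j=1}^m\|\eta_n g_j\|_{Y_j},
\end{multline*}
where $Y_j:=F^{k_j}_{p,q}(\R;L_q(\R^{d-1}))\cap L_p(\R;\B^{2mk_j}_{q,q}(\R^{d-1}))$ is the trace space of part~(ii), with the $g_j$-sum absent in part~(i). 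The elementary dilation identities $\|\eta_n\|_{L_p(\R)}=n^{1/p}\|\eta\|_{L_p}$ and $\|\eta_n'\|_{L_p(\R)}=n^{1/p-1}\|\eta'\|_{L_p}$ mean that, after dividing by $\|\eta_n\|_{L_p}$, the unwanted term decays as $n^{-1}\|u\|_{L_q}$, so sending $n\to\infty$ yields \eqref{eq:VMOellipticLS} in part~(i).

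For part~(ii) the same limiting procedure reduces the claim to
\[
\limsup_{n\to\infty}\frac{\|\eta_n g_j\|_{Y_j}}{\|\eta_n\|_{L_p(\R)}}\le C\|g_j\|_{\B^{2mk_j}_{q,q}(\R^{d-1})}.
\]
Fubini gives $\|\eta_n g_j\|_{L_p(\R;\B^{2mk_j}_{q,q})}=\|\eta_n\|_{L_p}\|g_j\|_{\B^{2mk_j}_{q,q}}$, and a Littlewood--Paley inspection of the dyadic blocks of $\widehat{\eta_n}(\tau)=n\hat\eta(n\tau)$, which concentrate near $\tau=0$ for large $n$, should give $\|\eta_n\|_{F^{k_j}_{p,q}(\R)}\le Cn^{1/p}$ uniformly; combined with the tensor-product identity $\|\eta_n g_j\|_{F^{k_j}_{p,q}(\R;L_q)}=\|\eta_n\|_{F^{k_j}_{p,q}}\|g_j\|_{L_q}$ and the embedding $\B^{2mk_j}_{q,q}\hookrightarrow L_q$ (note $2mk_j>0$ since $m_j<2m$), this delivers \eqref{eq:VMOellipticLSgj}. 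I expect the main technical obstacle to be precisely this uniform $n^{1/p}$ bound on the Triebel--Lizorkin norm of the time dilation $\eta_n$: although the scaling is immediate for the homogeneous seminorm, a careful argument at the zeroth Littlewood--Paley block is needed to control the full inhomogeneous norm as $n\to\infty$.
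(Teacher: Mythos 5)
Your argument reproduces the paper's proof: the authors also set $v(t,x)=\zeta(t/n)u(x)$ with $\zeta\in C_0^\infty(\R)$, apply Theorem~\ref{thm:VMOproblemLS} with trivial time weight, factor every mixed norm via Fubini, and let $n\to\infty$ so that the $\|\zeta_t(\cdot/n)\|_{L_p}/(n\|\zeta(\cdot/n)\|_{L_p})=O(n^{-1})$ contribution disappears; for part (ii) the paper simply says the argument carries over, which is the step you rightly flag as needing attention. The "obstacle'' you identify — a uniform bound $\|\eta_n\|_{F^{k_j}_{p,q}(\R)}\le Cn^{1/p}$ — is real but closes in one line: since $0<k_j<1$ one has $W^1_p(\R)=F^1_{p,2}(\R)\hookrightarrow F^{k_j}_{p,q}(\R)$, and $\|\eta_n\|_{W^1_p(\R)}=n^{1/p}\|\eta\|_{L_p}+n^{1/p-1}\|\eta'\|_{L_p}\le Cn^{1/p}$, so no separate analysis of the zeroth Littlewood--Paley block is needed. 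Your tensor identity $\|\eta_n g_j\|_{F^{k_j}_{p,q}(\R;L_q)}=\|\eta_n\|_{F^{k_j}_{p,q}(\R)}\|g_j\|_{L_q(\R^{d-1})}$ and the embedding $\B^{2mk_j}_{q,q}\hookrightarrow L_q$ are both correct, so with that one-line fix the proof of (ii) is complete and agrees in substance with the paper.
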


\begin{remark}$ $
(i) In Theorems \ref{thm:VMOproblemLS} and \ref{thm:VMOellipticLS} we focus only on the a priori estimates. The solvability of the corresponding equations will be derived in Section \ref{sec:solvab}.

(ii) For notational simplicity, in this paper we focus only on the scalar case. However, similar to \cite{DHP}, with the same proofs both Theorems \ref{thm:VMOproblemLS} and \ref{thm:VMOellipticLS} hold if one considers systems of operators, i.e., the coefficients $a_\alpha$ and $b_{j\beta}$ are $N\times N$ complex matrix-valued functions.

(iii) In  \cite{DHP, DHP07} and \cite{MS12b, MeyThesis}, the coefficients there considered are operator-valued, with values in a Banach space with the UMD property (Unconditional martingale difference, see \cite{HNVWbookI} for details). Since in our proofs we refer to these results when we freeze the coefficients and consider an unweighted setting, we believe that it is possible to extend our results also to the case of operator-valued coefficients, with values in a Hilbert space or in a UMD-Banach space. In particular, also the trace theorem needs to be extended to this case. Since in our results we do not include Muckenhoupt weights in the traces, this extension is straightforward by following \cite{MeyThesis}. For UMD-Banach valued coefficients in the weighted-space setting, we refer the reader to \cite{Lin17}.
\end{remark}

\section{Mean oscillation estimates for $u_{t}$ and $D^{\alpha}u$, $0\le|\alpha|\le 2m$,\\ except $D_{1}^{2m}u$}\label{sec:moe}
		
The main result of this section is stated in Lemma \ref{lemma:moe1}, and it shows mean oscillation estimates for $u_{t}$ and $D^{\alpha}u$, for all $0\le|\alpha|\le 2m$ except $D_{x_1}^{2m}u$. The proof of this lemma is the main novelty of the paper, and it generalizes some results in \cite{DK16} to general boundary conditions.
		
For a function $f$ defined on $\mathcal{D}\subset\R^{d+1}_{+}$, we set
\[
[f]_{C^{\frac{\nu}{2m},\nu}(\mathcal{D})}=\sup_{\substack{(t,x),(s,y)\in\mathcal{D}\\ (t,x)\neq(s,y)}}\frac{|f(t,x)-f(s,y)|}{|t-s|^{\frac{\nu}{2m}}+|x-y|^{\nu}}.
\]
Throughout the section, we assume that $A$ and $B_j$ consist only of their principal part.

Let
$$A_0=\sum_{|\alpha|=2m}\bar a_{\alpha}D^{\alpha}$$
be an operator with constant coefficients satisfying $|\overline{a}_{\alpha}|\le K$ for a constant $K>0$ and satisfying condition \ref{as:condell} with $\theta\in(0,\pi/2)$.
		
We first prove an auxiliary estimate, which is derived from a result in \cite{DHP07}.

\begin{lemma}\label{lemma:estconstcoeff}
Let $T\in(-\infty,+\infty]$ and $p,q\in(1,\infty)$. Let $A_0$ and $B_j$ be as above. Assume that for some $\theta\in(0,\pi/2)$ the \ref{as:LScond}-condition is satisfied.
Then for every $f\in L_{p,q}((-\infty,T)\times \R^{d}_{+})$ and
$$
g_j\in F^{k_j}_{p,q}((-\infty,T);L_{q}(\R^{d-1}))\cap L_{p}((-\infty,T);\B_{q,q}^{2mk_j}(\R^{d-1}))
$$
with $j\in\{1,\ldots,m\}$, $m_j \in\{0,\ldots,2m-1\}$, $k_j=1-m_j/(2m)-1/(2mq)$ and $u\in W^{1,2m}_{p,q}((-\infty,T)\times \R^{d}_{+})$ satisfying
\begin{equation}\label{prob:lemmaestconstcoeff}
\begin{dcases}
u_t(t,x) + (\lambda+A_0)u(t,x)=f(t,x) & {\rm in}\ (-\infty,T)\times\R^{d}_{+}\\
B_{j}u(t,x)\big|_{x_1=0}=g_j(t,x) & {\rm on}\ (-\infty,T)\times\R^{d-1},
\end{dcases}
\end{equation}
with $\lambda\ge 0$, we have
\begin{multline}\label{eq:lemmaestconstcoeff}
\|u_t\|_{L_{p,q}((-\infty,T)\times \R^{d}_{+})}+\sum_{|\alpha|\le 2m}\lambda^{1-\frac{|\alpha|}{2m}}\|D^{\alpha}u\|_{L_{p,q}((-\infty,T)\times \R^{d}_{+})}\\
\le C \|f\|_{L_{p,q}((-\infty,T)\times \R^{d}_{+})}
+ \sum_{j=1}^{m}\|g_j\|_{F^{k_j}_{p,q}((-\infty,T);L_{q}(\R^{d-1}))\cap L_{p}((-\infty,T);\B_{q,q}^{2mk_j}(\R^{d-1}))},
\end{multline}
with $C=C(\theta,m,d,K,p,q,b_{j\beta})>0$. Moreover, for any $\lambda>0$, $f\in L_{p,q}((-\infty,T)\times \R^{d}_{+})$ and
$$
g_j\in F^{k_j}_{p,q}((-\infty,T);L_{q}(\R^{d-1}))\cap L_{p}((-\infty,T);\B_{q,q}^{2mk_j}(\R^{d-1}))
$$
with $j\in\{1,\ldots,m\}$, $m_j \in\{0,\ldots,2m-1\}$, $k_j=1-m_j/(2m)-1/(2mq)$, there exists a unique solution $u\in W^{1,2m}_{p,q}((-\infty,T)\times \R^{d}_{+})$ to \eqref{prob:lemmaestconstcoeff}.
\end{lemma}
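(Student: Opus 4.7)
My plan is to reduce the estimate to the case $T=+\infty$ on $\R \times \R^d_+$, which falls squarely within the Denk--Hieber--Pr\"uss framework \cite{DHP07}, and then recover both the parameter-dependent weighting and the finite-time interval by scaling and causality. Specifically, since $A_0$ and the $B_j$ consist only of their principal parts, have constant coefficients, and satisfy \ref{as:condell} and \ref{as:LScond} for some $\theta \in (0,\pi/2)$, the pair $(A_0, B_j)$ fits into the setting of \cite[Theorem 2.1]{DHP07}. Applying that theorem on $\R \times \R^d_+$ at the fixed parameter $\lambda = 1$ yields, for every admissible datum $(f, g_1, \ldots, g_m)$, a unique $u \in W^{1,2m}_{p,q}(\R^{d+1}_+)$ solving the corresponding problem together with the unit-parameter a priori estimate.

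To promote this to the parameter-dependent bound for arbitrary $\lambda > 0$, I exploit the homogeneity of $A_0$ and of each $B_{j,\sharp}$. The anisotropic rescaling $v(t,x) := u(\lambda^{-1} t, \lambda^{-1/(2m)} x)$ converts a solution of \eqref{prob:lemmaestconstcoeff} at parameter $\lambda$ into a solution at parameter $1$, the right-hand side becoming $\lambda^{-1} f(\lambda^{-1}\,\cdot\,, \lambda^{-1/(2m)}\,\cdot\,)$ and each $g_j$ rescaling by the power dictated by $m_j$. Applying the unit-parameter estimate to $v$ and tracking how each norm transforms under this dilation reproduces exactly the weighting $\lambda^{1-|\alpha|/(2m)}$ on the left-hand side of \eqref{eq:lemmaestconstcoeff}. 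The case $\lambda = 0$ is then handled by combining the $\lambda = 1$ estimate with the Gagliardo--Nirenberg-type interpolation inequality, absorbing any zeroth-order term that appears into the top-order part.

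For $T < \infty$, I extend $f$ by zero outside $(-\infty, T)$ and, for each $j$, choose an extension $\tilde g_j$ of $g_j$ in $F^{k_j}_{p,q}(\R;L_q(\R^{d-1})) \cap L_p(\R;\B^{2mk_j}_{q,q}(\R^{d-1}))$ whose norm is arbitrarily close to the restriction norm of $g_j$; such extensions exist by the very definition of the restriction spaces. Solving the extended problem on $\R \times \R^d_+$ by the previous two steps and restricting the solution to $(-\infty, T) \times \R^d_+$ yields a function that solves \eqref{prob:lemmaestconstcoeff} on that slab. Causality (namely, the homogeneous version of the a priori estimate applied on $(-\infty, T)$, together with the fact that solutions in $W^{1,2m}_{p,q}$ must vanish as $t \to -\infty$) identifies this restriction with the given $u$. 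Taking the infimum over admissible extensions yields \eqref{eq:lemmaestconstcoeff}, and the same construction proves the solvability claim for $\lambda > 0$.

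The main obstacle I anticipate is the scaling step in the second paragraph: the boundary-data norm on each $g_j$ combines a Triebel--Lizorkin time component with a Besov tangential component, and verifying that both semi-norms scale by the correct power of $\lambda$ requires unfolding the Littlewood--Paley definitions and checking their compatibility with the anisotropic dilation $(t,x) \mapsto (\lambda^{-1} t, \lambda^{-1/(2m)} x)$. A secondary subtlety is to ensure that the restriction/extension machinery for the intersection space $F^{k_j}_{p,q}(\R;L_q) \cap L_p(\R;\B^{2mk_j}_{q,q})$ produces compatible norms; once these two technical points are settled, the remainder of the argument is routine.
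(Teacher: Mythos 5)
Your overall plan—reduce to a result of Denk--Hieber--Pr\"uss, scale, then handle $T<\infty$—is the right spirit and matches the paper's strategy. However, there are several genuine gaps.

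First, the reference in \cite{DHP07} (the paper uses \cite[Proposition 6.4]{DHP07}) treats the initial-value problem with zero initial data on $\R_+\times\R^d_+$, not the whole-line problem on $\R\times\R^d_+$. You cannot ``apply that theorem on $\R\times\R^d_+$'' as written. The paper bridges this gap by taking cutoffs $u_n=\eta(t+n)u$, applying the estimate on $(-n,\infty)$ with zero data at $t=-n$, and sending $n\to\infty$. Some version of this step is needed and is missing from your proposal.

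Second, the $\lambda=0$ endpoint is not handled by Gagliardo--Nirenberg. The estimate one needs at $\lambda=0$ contains only $\|u_t\|+\|D^{2m}u\|$ on the left, and the paper obtains it by showing—via the dilation you describe—that the constant from \cite{DHP07} is uniform over $\lambda\in(0,1)$ and then letting $\lambda\downarrow 0$. To then pass from the $\lambda=0$ top-order bound to the full weighted estimate $\sum_{|\alpha|\le 2m}\lambda^{1-|\alpha|/(2m)}\|D^\alpha u\|$ for $\lambda>0$, the paper invokes Agmon's trick (as in \cite[Theorem~4.1]{Kry07}), not a direct rescaling. Your direct-scaling route can be made to work only if you can first prove that the $\lambda=1$ estimate on the whole line controls $\|u\|_{L_{p,q}}$ (not just $u_t$ and $D^{2m}u$); that requires a separate semigroup/exponential-decay argument which you do not supply, whereas Agmon's trick avoids the issue entirely.

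Third, your $T<\infty$ argument is circular. You solve the extended problem on $\R$, restrict to $(-\infty,T)$, and then invoke ``the homogeneous version of the a priori estimate applied on $(-\infty,T)$'' to conclude the restriction equals $u$. But the a priori estimate on $(-\infty,T)$ is exactly what you are trying to prove. The paper avoids this by taking an even extension of $u$ across $t=T$, solving the associated problem on all of $\R$ (with the already-established whole-line result), and showing $w:=u-v$ vanishes for $t<T$ by solving the homogeneous problem forward from any $T_1<T$ with zero data and using uniqueness on $(T_1,\infty)$—a uniqueness that has already been established. You should use this forward-in-time uniqueness argument rather than appeal to the estimate you are proving.

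Finally, the solvability claim is asserted but not argued: one needs to approximate $f$ and $g_j$ by compactly supported (in time) data, solve each approximate problem via the initial-value result, verify the approximating solutions form a Cauchy sequence in $W^{1,2m}_{p,q}$ via the a priori estimate, and pass to the limit. Your proposal compresses all of this into ``the same construction proves the solvability claim,'' which glosses over real work.
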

\begin{proof}
We divide the proof into several steps. First we assume that $T=\infty$.

{\em Step 1.} Let $u\in W^{1,2m}_{p,q}(\R_+\times \R^{d}_{+})$ be a solution to
\begin{equation}\label{prob:lemmadhpcosnt}
\begin{dcases}
u_t (t,x) + (\lambda+A_0)u(t,x)=f(t,x) & {\rm in}\ \R_+\times\R^{d}_{+}\\
B_{j}u(t,x)\big|_{x_1=0}=g_j(t,x) & {\rm on}\ \R_+\times\R^{d-1},\ \  j=1,\ldots,m\\
u(0,x)=0 &  {\rm on}\ \R^{d}_{+},
\end{dcases}
\end{equation}
with $\lambda>0$. By applying \cite[Proposition 6.4]{DHP07} to \eqref{prob:lemmadhpcosnt}, it holds that
\begin{equation}\label{eq:lemmadhpcosnt}
\begin{aligned}
&\|u_t\|_{L_{p,q}(\R_+\times \R^{d}_{+})}+\|D^{2m}u\|_{L_{p,q}(\R_+\times \R^{d}_{+})}\\
&\le C \|f\|_{L_{p,q}(\R_+\times \R^{d}_{+})}+ C\sum_{j=1}^{m}\|g_j\|_{F^{k_j}_{p,q}(\R_+;L_{q}(\R^{d-1}))\cap L_{p}(\R_+;\B_{q,q}^{2mk_j}(\R^{d-1}))},
\end{aligned}
\end{equation}
with $C=C(\lambda,\theta,m,d,K,p,q,b_{j\beta})$.
We remark that although the estimate is not explicitly stated in this reference, it can be extracted from the proofs there. We want to show that the estimate \eqref{eq:lemmadhpcosnt} also holds when $\lambda=0$.

For this, observe that in \cite[Proposition 6.4]{DHP07}, the coefficients of the operators under consideration are time and space dependent. In our case, since $A_0$ has constant coefficients, using a scaling $t\rightarrow \lambda^{-1} t$, $x\rightarrow \lambda^{-1/2m}x$, we obtain that the estimate \eqref{eq:lemmadhpcosnt} holds for any $\lambda\in(0,1)$ and with constant $C$ uniform in $\lambda$.
In fact, for a general $\lambda\in(0,1)$, let $v(t,x):=u(\lambda^{-1}t,\lambda^{-1/2m}x)$. Then $v$ satisfies
\begin{equation}\label{prob:lemmadhpscaling}
\begin{dcases}
v_t (t,x) + A_0 v(t,x) + v(t,x) =\tilde{f}(t,x) & {\rm in}\ \R_+\times\R^{d}_{+}\\
B_{j}v(t,x)\big|_{x_1=0}=\tilde{g}_{j}(t,x) & {\rm on}\ \R_+\times\R^{d-1}\\
v(0, x)=0 &  {\rm on}\ \R^{d}_{+}.
\end{dcases}
\end{equation}
where
$$
\tilde{f}(t,x)=\lambda^{-1}f(\lambda^{-1}t,\lambda^{-1/2m}x)
$$
and
$$
\tilde{g}_j(t,x)=\lambda^{-m_j/2m}g_{j}(\lambda^{-1}t,\lambda^{-1/2m}x).
$$
Applying \eqref{eq:lemmadhpcosnt} with $\lambda=1$ to \eqref{prob:lemmadhpscaling} we get that
\begin{equation}
\begin{aligned}
&\|v_t\|_{L_{p,q}(\R_+\times \R^{d}_{+})}+\|D^{2m}v\|_{L_{p,q}(\R_+\times \R^{d}_{+})}\\
&\le C \|\tilde{f}\|_{L_{p,q}(\R_+\times \R^{d}_{+})}+ C\sum_{j=1}^{m}\|\tilde{g}_j\|_{F^{k_j}_{p,q}(\R_+;L_{q}(\R^{d-1}))\cap L_{p}(\R_+;\B_{q,q}^{2mk_j}(\R^{d-1}))},
\end{aligned}
\end{equation}
with $C=C(\theta,m,d,K,p,q,b_{j\beta})$.
Now, scaling back and using the definition of the Besov space and Triebel--Lizorkin space, it is easily seen
\begin{equation}
\begin{aligned}
&\|u_t\|_{L_{p,q}(\R_+\times \R^{d}_{+})}+\|D^{2m}u\|_{L_{p,q}((0,\infty)\times \R^{d}_{+})}\\
&\le C \|f\|_{L_{p,q}(\R_+\times \R^{d}_{+})}+ C\sum_{j=1}^{m}\|g_j\|_{F^{k_j}_{p,q}(\R_+;L_{q}(\R^{d-1}))\cap L_{p}(\R_+;\B_{q,q}^{2mk_j}(\R^{d-1}))},
\end{aligned}
\end{equation}
where $C$ is independent of $\lambda\in (0,1)$.
Sending $\lambda\rightarrow 0$, we obtain that the estimate \eqref{eq:lemmadhpcosnt} holds when $\lambda=0$.
Finally, by applying a procedure of S. Agmon as in \cite[Theorem 4.1]{Kry07}, from \eqref{eq:lemmadhpcosnt} with $\lambda=0$ it follows that when $\lambda>0$,
\begin{equation}\label{eq:estagmon}
\begin{aligned}
&\|u_t\|_{L_{p,q}(\R_+\times \R^{d}_{+})}+\sum_{|\alpha|\le 2m}\lambda^{1-\frac{|\alpha|}{2m}}\|D^{\alpha}u\|_{L_{p,q}(\R_+\times \R^{d}_{+})}\\
&\le C \|f\|_{L_{p,q}(\R_+\times \R^{d}_{+})}+ C\sum_{j=1}^{m}\|g_j\|_{F^{k_j}_{p,q}(\R_+;L_{q}(\R^{d-1}))\cap L_{p}(\R_+;\B_{q,q}^{2mk_j}(\R^{d-1}))},
\end{aligned}
\end{equation}
with constant $C=C(\theta,m,d,K,p,q,b_{j\beta})$.

{\em Step 2.} Take $\eta\in C^{\infty}(\R)$ such that $\eta=1$ for $t>1$ and $\eta=0$ for $t<0$. Define $u_n=\eta(t+n)u$. From \eqref{prob:lemmaestconstcoeff}, we see that $u_n$ satisfies
\begin{equation}\label{eq:lemmaestconstcoeffproof}
\begin{dcases}
(u_n)_t (t,x) + (\lambda+A_0)u_n(t,x)=f_n(t,x) & {\rm in}\ (-n,\infty)\times\R^{d}_{+}\\
B_{j}u_n(t,x)\big|_{x_1=0}=g_{n,j}(t,x) & {\rm on}\ (-n,\infty)\times\R^{d-1}\\
u_n(-n,x)=0 &  {\rm on}\ \R^{d}_{+},
\end{dcases}
\end{equation}
for $j=1,\ldots,m$, where $\lambda>0$ and
$$
f_n=\eta(t+n)f+u\eta_t(t+n)\quad \text{and}\quad g_{n,j}(t,x)=\eta(t+n)g_j(t,x).
$$
By applying \eqref{eq:estagmon} to \eqref{eq:lemmaestconstcoeffproof}, we get that
\begin{align}\label{eq:proofscaling}
&\|(u_n)_t\|_{L_{p,q}((-n,\infty)\times \R^{d}_{+})}+\sum_{|\alpha|\le 2m}\lambda^{1-\frac{|\alpha|}{2m}}\|D^{\alpha}u_n\|_{L_{p,q}((-n,\infty)\times \R^{d}_{+})}\nonumber\\
&\le C \|f_n\|_{L_{p,q}((-n,\infty)\times \R^{d}_{+})}\nonumber\\
&\quad + C\sum_{j=1}^{m}\|g_{n,j}\|_{F^{k_j}_{p,q}((-n,\infty);L_{q}(\R^{d-1}))\cap L_{p}((-n,\infty);\B_{q,q}^{2mk_j}(\R^{d-1}))},
\end{align}
with $C=C(\theta,m,d,K,p,q,b_{j\beta})$.
Now, taking the limit as $n\rightarrow\infty$ yields \eqref{eq:lemmaestconstcoeff}, i.e., for any $\lambda>0$,
\begin{equation*}
\begin{aligned}
&\|u_t\|_{L_{p,q}(\R\times \R^{d}_{+})}+\sum_{|\alpha|\le 2m}\lambda^{1-\frac{|\alpha|}{2m}}\|D^{\alpha}u\|_{L_{p,q}(\R\times \R^{d}_{+})}\\
&\le C \|f\|_{L_{p,q}(\R\times \R^{d}_{+})}
+ C\sum_{j=1}^{m}\|g_j\|_{F^{k_j}_{p,q}(\R;L_{q}(\R^{d-1}))\cap L_{p}(\R;\B_{q,q}^{2mk_j}(\R^{d-1}))},
\end{aligned}
\end{equation*}
with $C=C(\theta,m,d,K,p,q,b_{j\beta})$.

{\em Step 3.} For the solvability, let $f\in L_{p,q}(\R^{d+1}_{+})$ and $g_j\in F^{k_j}_{p,q}(\R;L_{q}(\R^{d-1}))\cap L_{p}(\R;\B_{q,q}^{2mk_j}(\R^{d-1}))$, $j=1\ldots, m$. For integer $n>0$, define
$$
f_n=\eta(t+n)f\quad\text{and}\quad g_{n,j}=\eta(t+n)g_j
$$
so that
$f_n\rightarrow f$ in $L_{p,q}(\R^{d+1}_{+})$ and
$$
g_{n,j}\rightarrow g_j\quad \text{in}\,\, F^{k_j}_{p,q}(\R;L_{q}(\R^{d-1}))\cap L_{p}(\R;\B_{q,q}^{2mk_j}(\R^{d-1})).
$$
Now let $u_n\in W^{1,2m}_{p,q}((-n,\infty)\times \R^d_+)$ be the solution to the initial-boundary value problem with $f_n$ and $g_{n,j}$ and zero initial value at $t=-n$, the existence of which is guaranteed by \cite[Proposition 6.4]{DHP07}. We extend $u_n$ to be zero for $t<-n$. It is easily seen that $u_n$ satisfies \eqref{prob:lemmaestconstcoeff} with $f_n$ and $g_{n,j}$ in place of $f$ and $g_j$, respectively.
Applying the a priori estimate obtained in the argument above to $u_m-u_n$, we get that $\{u_n\}$ is a Cauchy sequence. Then the limit $u\in W^{1,2m}_{p,q}(\R^{d+1}_{+})$ is a solution to \eqref{prob:lemmaestconstcoeff}.

{\em Step 4.} For general $T<\infty$, we may assume $T=0$ by shifting the $t$-coordinate. We first take the even extensions of $u$ with respect to $t=0$.
Then $u\in W^{1,2m}_{p,q}(\R\times \R^{d}_{+})$. Next we take the even extension of $f$ and $g_j$ with respect to $t=0$. Let $v\in W^{1,2m}_{p,q}(\R\times \R^{d}_{+})$ be the solution to
\begin{equation*}
\begin{dcases}
v_t(t,x) + (\lambda+A_0)v(t,x)=f(t,x) & {\rm in}\ \R^{d+1}_{+}\\
B_{j}v(t,x)\big|_{x_1=0}=g_j(t,x) & {\rm on}\ \R\times\R^{d-1}, j=1,\ldots,m,
\end{dcases}
\end{equation*}
the existence of which is guaranteed by the argument above. Observe that $w:=u-v\in W^{1,2m}_{p,q}(\R\times \R^{d}_{+})$ satisfies
\begin{equation*}
\begin{dcases}
w_t(t,x) + (\lambda+A_0)w(t,x)=0 & {\rm in}\ (-\infty,0)\times\R^{d}_{+}\\
B_{j}w(t,x)\big|_{x_1=0}=0 & {\rm on}\ (-\infty,0)\times\R^{d-1}, j=1,\ldots,m.
\end{dcases}
\end{equation*}
We claim that $w=0$ on $t<0$. Indeed, for any $T_1<0$, we solve the equation of $w$ in $(T_1,\infty)\times \R^d_+$ with the zero initial data to get $w_1$, and extend $w_1=0$ for $t<T_1$.
It is easily seen that the extended function $w_1$ satisfies the same equation of $w$ in $\R\times \R^d_+$.
By the uniqueness of the solution, $w=w_1$. Therefore, $w=0$ when $t<T_1$ for any $T_1<0$. Then,
\begin{align*}
&\|u_t\|_{L_{p,q}((-\infty,0)\times \R^{d}_{+})}+\sum_{|\alpha|\le 2m}\lambda^{1-\frac{|\alpha|}{2m}}\|D^{\alpha}u\|_{L_{p,q}((-\infty,0)\times \R^{d}_{+})}\\
&=\|v_t\|_{L_{p,q}((-\infty,0)\times \R^{d}_{+})}+\lambda^{1-\frac{|\alpha|}{2m}}\|D^{\alpha}v\|_{L_{p,q}((-\infty,0)\times \R^{d}_{+})}
\\
&\le C \|f\|_{L_{p,q}(\R \times \R^{d}_{+})}
+ \sum_{j=1}^{m}\|g_j\|_{F^{k_j}_{p,q}(\R;L_{q}(\R^{d-1}))\cap L_{p}(\R;\B_{q,q}^{2mk_j}(\R^{d-1}))}\\
&= C \|f\|_{L_{p,q}((-\infty,0)\times \R^{d}_{+})}
+ \sum_{j=1}^{m}\|g_j\|_{F^{k_j}_{p,q}((-\infty,0);L_{q}(\R^{d-1}))\cap L_{p}((-\infty,T);\B_{q,q}^{2mk_j}(\R^{d-1}))}.
\end{align*}
The solvability is obtained by taking the even extension of $g_j$ and $f$, and then solve the equation in $\R\times \R^d_+$. The uniqueness follows from the a priori estimate.
\end{proof}

\begin{remark}
In Lemma \ref{lemma:estconstcoeff} as well as Theorem \ref{thm:VMOproblemLS}, we assumed $\theta\in(0,\pi/2)$. However, in \cite{DHP07, MeyThesis}, it is shown that in the case of operators with constant leading coefficients, or operators with uniformly continuous
leading coefficients in a bounded domain, it is sufficient that the conditions \ref{as:condell} and \ref{as:LScond} are satisfied for $\theta=\pi/2$, which are slightly weaker. The condition (E)$_{\pi/2}$ is also referred to as \textit{normal ellipticity} condition.
\end{remark}

From Lemma \ref{lemma:estconstcoeff}, we obtain the following H\"{o}lder estimate.
\begin{lemma}\label{lemma:iteration}
Let $0<r_{1}<r_{2}<\infty$. Let $v\in W^{1,2m}_{p}(Q_{r_2}^{+})$ be a solution to the homogeneous problem
\begin{equation}\label{prob:iterationlemma}
\begin{dcases}
v_{t} + A_0 v=0 & {\rm in}\ Q_{r_2}^+\\
B_j v\big|_{x_1=0}=0 & {\rm on}\ Q_{r_2}\cap\{x_1=0\},\ j=1,\ldots,m.
\end{dcases}
\end{equation}
Assume that for some $\theta\in(0,\pi/2)$ the \ref{as:LScond}-condition is satisfied.
Then there exists a constant $C=C(\theta,K,p,d,m,r_{1},r_{2},b_{j\beta})>0$ such that
\begin{equation}\label{eq:iteration}
\|v_{t}\|_{L_{p}(Q_{r_1}^{+})}+\|D^{2m}v\|_{L_p(Q_{r_1}^{+})}\le C\|v\|_{L_p(Q_{r_2}^{+})}.
\end{equation}
Furthermore, for $\nu=1-\frac{1}{p}$,
\begin{equation}\label{eq:estformoe}
[v_t]_{C^{\frac{\nu}{2m},\nu}(Q_{r_1}^+)}+[D^{2m-1}D_{x'}v]_{C^{\frac{\nu}{2m},\nu}(Q_{r_1}^+)}
\le C\|v_{t}\|_{L_{p}(Q_{r_2}^{+})}+C\|D^{2m}v\|_{L_{p}(Q_{r_2}^+)},
\end{equation}
with $C=C(\theta,K,p,d,m,r_{1},r_{2},b_{j\beta})>0$.
\end{lemma}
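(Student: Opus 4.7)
The plan is to prove \eqref{eq:iteration} first by a cutoff argument based on Lemma \ref{lemma:estconstcoeff}, and then to deduce \eqref{eq:estformoe} by iterating \eqref{eq:iteration} on tangential derivatives of $v$ (which again solve the homogeneous problem thanks to the constancy of the leading coefficients), recovering normal derivatives from the equation, and finally invoking the anisotropic Sobolev embedding of Theorem \ref{them:parabSobImb}.

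For \eqref{eq:iteration}, I would fix an intermediate radius $r_1<r'<r_2$ and choose a smooth cutoff $\eta=\eta(t,x)$ with $\eta\equiv 1$ on $Q_{r_1}^{+}$ and $\supp\eta\subset\overline{Q_{r'}^{+}}$, allowing $\eta$ to be nonzero on $\{x_1=0\}$. Setting $w=\eta v$ and using $v_t+A_0v=0$, $B_jv|_{x_1=0}=0$, one computes
\[
w_t+A_0 w=\eta_t v+[A_0,\eta]v=:F,\qquad B_jw|_{x_1=0}=[B_j,\eta]v|_{x_1=0}=:G_j,
\]
where $F$ contains only derivatives of $v$ of order at most $2m-1$ (multiplied by derivatives of $\eta$) supported in $Q_{r'}^{+}$, and $G_j$ involves only traces of derivatives of $v$ of order $\le m_j-1$. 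Applying Lemma \ref{lemma:estconstcoeff} (with $q=p$ and $\lambda=0$) to $w$ on $(-\infty,0)\times\R^{d}_+$ and estimating the trace norms $\|G_j\|_{F^{k_j}_{p,p}\cap L_p(\B^{2mk_j}_{p,p})}$ via Theorem \ref{thm:MeyLemma1.3.11} in terms of $\|v\|_{W^{s,2ms}_p(Q_{r'}^+)}$ for suitable $s<1$, one obtains
\[
\|v_t\|_{L_p(Q_{r_1}^+)}+\|D^{2m}v\|_{L_p(Q_{r_1}^+)}\le C\sum_{|\alpha|\le 2m-1}\|D^\alpha v\|_{L_p(Q_{r'}^+)}+C\|v\|_{L_p(Q_{r'}^+)}.
\]
The lower-order derivatives are then absorbed using a standard parabolic interpolation inequality and Krylov's telescoping iteration over a chain of nested cylinders $r_1<r_1^{(1)}<r_1^{(2)}<\dots<r_2$, which yields \eqref{eq:iteration}.

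For \eqref{eq:estformoe}, I exploit that $A_0$ and the principal parts of the $B_j$ have constant coefficients: therefore, for any integer $k\ge 0$ and any multi-index $\gamma=(0,\gamma_2,\dots,\gamma_d)$, the tangential derivative $\partial_t^k D_{x'}^{\gamma}v$ again solves \eqref{prob:iterationlemma} on any smaller cylinder. Applying \eqref{eq:iteration} iteratively to these tangential derivatives along a chain of nested cylinders inside $Q_{r_2}^+$ produces $L_p$-bounds for $\partial_t^k D_{x'}^{\gamma}v$ and $D^{2m}\partial_t^k D_{x'}^{\gamma}v$ in $Q_{r_1}^+$ of arbitrarily high order, controlled by $\|v_t\|_{L_p(Q_{r_2}^+)}+\|D^{2m}v\|_{L_p(Q_{r_2}^+)}$ (without picking up $\|v\|_{L_p}$, because the first step of the iteration can already be applied to $v_t$ and to $D_{x_j}v$ on slightly smaller cylinders). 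Normal derivatives $D_{x_1}^{j}v$ with $j\ge 2m$ are recovered from the equation: ellipticity condition \ref{as:condell} forces the coefficient of $D_{x_1}^{2m}$ in $A_0$ to be nonzero, so $D_{x_1}^{2m}v$ is expressed algebraically in terms of $v_t$ and spatial derivatives of lower normal order and arbitrary tangential order, and differentiation propagates this to higher $j$. This places $v$ (and in particular $D_{x'}v$ and $v_t$) in $W^{k,2m,h}_{t,x_1,x';p}(Q_{r_1}^+)$ for every $k,h\in\N$, with the norm bounded by $\|v_t\|_{L_p(Q_{r_2}^+)}+\|D^{2m}v\|_{L_p(Q_{r_2}^+)}$. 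Theorem \ref{them:parabSobImb}, applied after a harmless rescaling to $v_t$ and to $D_{x'}v$, then delivers H\"older continuity of $v_t$ and of $D^{2m-1}D_{x'}v$ with any exponent not exceeding $(2m-1/p)/2m$ in time and $2m-1/p$ in space, in particular the exponent $\nu=1-1/p$ in \eqref{eq:estformoe}.

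The main obstacle will be the treatment of the boundary commutator $G_j=[B_j,\eta]v|_{x_1=0}$ in the first step. Unlike the interior term $F$, which is a clean $L_p$ quantity, $G_j$ lives in the mixed Triebel--Lizorkin/Besov space dictated by Lemma \ref{lemma:estconstcoeff}, and its correct control requires the anisotropic trace estimates of Theorem \ref{thm:MeyLemma1.3.11} and Lemma \ref{lemma:MeyLemma1.3.13} together with a careful choice of the intermediate smoothness parameter $s$ so that $s+\tfrac{|\beta|}{2m}+\tfrac{1}{2mp}<1$ for all $|\beta|\le m_j-1$; this is also why one must be able to absorb the lower-order $v$-norms via interpolation and iteration rather than handling them directly.
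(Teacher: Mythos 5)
Your outline of \eqref{eq:iteration} matches the paper's proof in spirit (cutoff, Lemma~\ref{lemma:estconstcoeff} with $p=q$, trace estimates for the commutator boundary terms via Theorem~\ref{thm:MeyLemma1.3.11}, absorbing lower-order terms by interpolation and a telescoping iteration over nested cylinders), and your bootstrap--plus--Sobolev strategy for H\"older regularity is also what the paper does. However, there is a genuine gap in your derivation of \eqref{eq:estformoe}, and it is precisely the point the paper identifies as its main novelty.

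Your claim that iterating \eqref{eq:iteration} on $D_{x'}v$ ``avoids picking up $\|v\|_{L_p}$'' does not close the argument. Applying \eqref{eq:iteration} and the Sobolev embedding to $D_{x'}v$ (which indeed solves the same homogeneous problem) yields at best
\[
[D^{2m-1}D_{x'}v]_{C^{\frac{\nu}{2m},\nu}(Q_{r_1}^+)}\le C\|D_{x'}v\|_{L_p(Q_{r_2}^+)},
\]
but $\|D_{x'}v\|_{L_p(Q_{r_2}^+)}$ is \emph{not} controlled by $\|v_t\|_{L_p(Q_{r_2}^+)}+\|D^{2m}v\|_{L_p(Q_{r_2}^+)}$: the homogeneous problem admits polynomial solutions of degree $\le 2m-2$ compatible with the boundary conditions, for which $v_t=0$ and $D^{2m}v=0$ while $D_{x'}v\neq 0$. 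This is exactly why the paper inserts a Poincar\'e-type inequality (Lemma~\ref{lemma:poly}), which constructs a degree-$(2m-2)$ polynomial $P$ such that $v-P$ still satisfies \eqref{prob:iterationlemma} and $\|D^\alpha(v-P)\|_{L_p(Q_{r_2}^+)}\le C\|D^{2m-1}v\|_{L_p(Q_{r_2}^+)}$ for $|\alpha|\le 2m-2$. Since $D^{2m-1}P=0$, one gets $[D^{2m-1}v]_{C^{\nu/2m,\nu}}=[D^{2m-1}(v-P)]_{C^{\nu/2m,\nu}}\le C\|v-P\|_{L_p(Q_{r_2}^+)}\le C\|D^{2m-1}v\|_{L_p(Q_{r_2}^+)}$, and applying this to $D_{x'}v$ then gives the required bound by $\|D^{2m}v\|_{L_p}$. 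The construction of $P$ is nontrivial: it must respect the boundary operators $B_j$, and this uses the linear independence coming from the \ref{as:LScond}-condition. Without this polynomial subtraction your argument cannot reach the stated right-hand side of \eqref{eq:estformoe}. (For $[v_t]_{C^{\nu/2m,\nu}}$ your iteration-on-$v_t$ route does work, since there $\|v_t\|_{L_p}$ is already allowed on the right.)
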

		
\begin{proof}
Set $R_0=r_1$ and $R_{i}=r_1+(r_2-r_1)(1-2^{-i})$, for $i=1,2,\ldots $. For each $i=0,1,2,\ldots$, take $\eta_{i}\in C_{0}^{\infty}(\overline{\R^{d+1}_{+}})$ satisfying
\begin{equation*}
\begin{dcases}
\eta_i=1 & {\rm in}\ Q_{R_i}^+\\
\eta_i=0 & {\rm outside}\ (-R_{i}^{2m},R_{i}^{2m})\times B_{R_{i+1}}
\end{dcases}
\end{equation*}
and
\begin{equation}\label{eq:moepropeta}
|D^{k}\eta_{i}|\le C2^{ki}(r_2-r_1)^{-k},\quad  |(\eta_{i})_{t}|\le C2^{2mi}(r_2-r_1)^{-2m}
\end{equation}
where $k=0,1,\ldots,2m$. It is easily seen that $v\eta_{i}\in W^{1,2m}_{p}(\R^{d+1}_{+})$ satisfies
\begin{equation}\label{prob:iteration}
\begin{dcases}
(v\eta_i)_{t}+A_0(v\eta_i)=f & {\rm in}\ \R^{d+1}_{+}\\
B_j(v\eta_i)\big|_{x_1=0}={\rm tr}_{x_1=0}G_j & {\rm on}\ \partial\R^{d+1}_{+},\ j=1,\ldots,m\\
(v\eta_i)(-r_2^{2m},\cdot)=0,
\end{dcases}
\end{equation}
where
\[
f=v(\eta_{i})_{t}+ \sum_{|\alpha|=2m}\sum_{|\gamma|\le 2m-1}\binom{\alpha}{\gamma} \bar a_{\alpha}D^{\gamma}vD^{\alpha-\gamma}\eta_{i}
\]
and
\[
G_j=\sum_{|\beta|=m_j}\sum_{|\tau|\le m_{j}-1}\binom{\beta}{\tau}b_{j\beta}D^{\tau}vD^{\beta-\tau}\eta_{i},\quad j=1,\ldots,m.
\]
Thus we extended \eqref{prob:iterationlemma} to a system on $\R\times\R^{d}_{+}$ without changing the value of $v$ on $Q_{r_1}^{+}$. Now let
$$
g_j={\rm tr}_{x_1=0}G_j\in W_{p}^{1-\frac{m_j}{2m}-\frac{1}{2mp},2m-m_{j}-\frac{1}{p}}(\R\times \R^{d-1}).
$$
By applying Lemma \ref{lemma:estconstcoeff} with $p=q$, we get
\begin{multline*}
\|(v\eta_{i})_{t}\|_{L_{p}(\R^{d+1}_{+})}+\|D^{2m}(v\eta_{i})\|_{L_{p}(\R^{d+1}_{+})}\\
\le C\|f\|_{L_{p}(\R^{d+1}_{+})}+C\sum_{j=1}^{m}\|g_{j}\|_{W_{p}^{1-\frac{m_j}{2m}-\frac{1}{2mp},2m-m_{j}-\frac{1}{p}}(\R\times \R^{d-1})},
\end{multline*}
where $C=C(\theta,K,d,m,p,b_{j\beta})$. By Theorem \ref{thm:MeyLemma1.3.11} with $s=1-\frac{m_{j}}{2m}\in (0,1]$, $m_{j}\in\{0,\ldots,2m-1\}$, we have
\[
\|g_{j}\|_{W_{p}^{1-\frac{m_j}{2m}-\frac{1}{2mp},2m-m_{j}-\frac{1}{p}}(\R\times \R^{d-1})}\le C\|G_{j}\|_{W_{p}^{1-\frac{m_j}{2m},2m-m_{j}}(\R^{d+1}_{+})}.
\]
Observe that
\[
\|f\|_{L_{p}(\R^{d+1}_{+})}\le C \|(\eta_{i})_{t}v\|_{L_{p}(\R^{d+1}_{+})}+C\sum_{|\alpha|=2m}
\sum_{|\gamma|\le 2m-1}\|D^{\gamma}vD^{\alpha-\gamma}\eta_{i}\|_{L_{p}(\R^{d+1}_{+})}
\]
and
\[
\|G_{j}\|_{W_{p}^{1-\frac{m_j}{2m},2m-m_{j}}(\R^{d+1}_{+})}\le C\sum_{|\beta|=m_j}\sum_{|\tau|\le m_j-1}\|D^{\tau}vD^{\beta-\tau}\eta_i\|_{W_{p}^{1-\frac{m_j}{2m},2m-m_{j}}(\R^{d+1}_{+})},
\]
where the constant $C=C(\theta,K,p,d,m)$ may vary from line to line.
By \eqref{eq:moepropeta}, it holds that
\[
\|(\eta_{i})_{t}v\|_{L_{p}(\R^{d+1}_{+})}\le C2^{2mi}(r_2-r_1)^{-2m}\|v\|_{L_{p}(Q_{r_2}^{+})}.
\]
By \eqref{eq:moepropeta} and interpolation inequalities (see e.g. \cite{krylov} and the proof of \cite[Lemma 3.2]{DKBMO11}), for $\varepsilon>0$ small enough and $|\gamma|\le 2m-1$ we get
\begin{equation*}
\begin{aligned}
&\|D^{\gamma}v D^{\alpha-\gamma} \eta_{i}\|_{L_p(\R^{d+1}_{+})}\le \|D^{\gamma}(v\eta_{i+1})D^{\alpha-\gamma}\eta_{i}\|_{L_{p}(\R^{d+1}_{+})}\\
&\le C2^{(2m-|\gamma|)i}(r_2-r_1)^{-(2m-|\gamma|)}\|D^{\gamma}(v\eta_{i+1})\|_{L_p(\R^{d+1}_{+})}\\
&\le \varepsilon \|D^{2m}(v\eta_{i+1})\|_{L_{p}(\R^{d+1}_{+})}
+C_{\varepsilon}2^{2mi}(r_2-r_1)^{-2m}\|v\|_{L_{p}(Q_{r_2}^{+})},
\end{aligned}
\end{equation*}
where $C_{\varepsilon}=C\varepsilon^{\frac{|\gamma|}{|\gamma|-2m}}$. Moreover, by the parabolic interpolation inequality
and \eqref{eq:moepropeta}, for $\varepsilon>0$ small enough and $|\tau|\le m_{j}-1$ we get
\begin{equation*}
\begin{aligned}
&\|D^{\tau}vD^{\beta-\tau}\eta_i\|_{W_{p}^{1-\frac{m_j}{2m},2m-m_{j}}(\R^{d+1}_{+})}\\
&\le C2^{(m_j-|\tau|)i}(r_2-r_1)^{-(m_j-|\tau|)}\|D^{\tau}(v\eta_{i+1})\|_{W_{p}^{1-\frac{m_j}{2m},2m-m_{j}}(\R^{d+1}_{+})}\\
&\le \varepsilon \|D^{2m}(v\eta_{i+1})\|_{L_{p}(\R^{d+1}_{+})} + \varepsilon \|(v\eta_{i+1})_{t}\|_{L_{p}(\R^{d+1}_{+})}
+C_{\varepsilon}2^{2mi}(r_2-r_1)^{-2m}\|v\|_{L_{p}(Q_{r_2}^{+})}
\end{aligned}
\end{equation*}
where $C_{\varepsilon}=C\varepsilon^{\frac{2m+|\tau|-m_j}{|\tau|-m_j}}$.
			
Combining the above inequalities yields
\begin{equation*}
\begin{aligned}
&\|(v\eta_{i})_{t}\|_{L_{p}(\R^{d+1}_{+})}
+\|D^{2m}(v\eta_{i})\|_{L_{p}(\R^{d+1}_{+})}
\le (C+C_{\varepsilon})2^{2mi}(r_2-r_1)^{-2m}\|v\|_{L_{p}(Q_{r_2}^{+})}\\
&+C\varepsilon\|D^{2m}(v\eta_{i+1})\|_{L_{p}(\R^{d+1}_{+})}+C\varepsilon\|(v\eta_{i+1})_{t}\|_{L_{p}(\R^{d+1}_{+})}.
\end{aligned}
\end{equation*}
We multiply both sides by $\varepsilon^{i}$ and we sum with respect to $i$ to get
\begin{align*}
&\sum_{i=0}^{\infty}\varepsilon^{i}\bigg(\|(v\eta_{i})_{t}\|_{L_{p}(\R^{d+1}_{+})}+\|D^{2m}(v\eta_{i})\|_{L_{p}(\R^{d+1}_{+})}\bigg)\\
&\le(C+C_{\varepsilon})(r_2-r_1)^{-2m}\sum_{i=0}^{\infty}(2^{2m}\varepsilon)^{i}\|v\|_{L_{p}(Q_{r_2}^{+})}\\
&\quad+C\sum_{i=1}^{\infty}\varepsilon^{i}\bigg(\|D^{2m}(v\eta_{i})\|_{L_{p}(\R^{d+1}_{+})}
+\|(v\eta_{i})_{t}\|_{L_{p}(\R^{d+1}_{+})}\bigg).
\end{align*}
We choose $\varepsilon=2^{-2m-1}$ and observe that the above summations are finite. Then, the above estimate gives
\begin{equation}\label{eq:moefinalestimate}
\|(v\eta_{0})_{t}\|_{L_{p}(\R^{d+1}_{+})}+\|D^{2m}(v\eta_{0})\|_{L_{p}(\R^{d+1}_{+})}\le C (r_2-r_1)^{-2m}\|v\|_{L_{p}(Q_{r_2}^{+})}.
\end{equation}
Since the left-hand side of \eqref{eq:moefinalestimate} is greater than that of \eqref{eq:iteration}, we can conclude
\[
\|v_{t}\|_{L_{p}(Q_{r_1}^{+})}+\|D^{2m}v\|_{L_p(Q_{r_1}^{+})}\le C(r_2-r_2)^{-2m}\|v\|_{L_{p}(Q_{r_2}^{+})},
\]
with $C=C(\theta,K,p,d,m,b_{j\beta})$.
		
To show the H\"{o}lder estimate for $v$, we proceed as follows. First, observe that from \eqref{eq:iteration} and interpolation inequalities, it holds that
\begin{equation}\label{eq:iterationlemmaproof}
\|v\|_{W^{1,2m}_{p}(Q^{+}_{r_1})}\le  C\|v\|_{L_{p}(Q_{r_2}^+)}.
\end{equation}
Observe now that for $k,h>0$, the derivatives $D^{k}_{t}D^{h}_{x'}v$ satisfy the same equation as $v$. Hence, from \eqref{eq:iterationlemmaproof} and a standard bootstrap argument, it holds that $v\in W^{k,2m,h+2m}_{t,x_{1},x';p}(Q^{+}_{r_1})$ with
\[
\|v\|_{W^{k,2m,h+2m}_{t,x_{1},x';p}(Q^{+}_{r_1})}\le C\|v\|_{L_{p}(Q_{r_2}^+)}.
\]
Observe that Theorem \ref{them:parabSobImb} implies for $\nu=1-\frac{1}{p}$,
\[
v,\,D^{2m-1}v\in C^{\frac{\nu}{2m},\nu}(Q_{r_1}^{+})
\]
and
\begin{equation}\label{eq:iterationlemmaspace}
[v]_{C^{\frac{\nu}{2m},\nu}(Q^{+}_{r_1})}+
[D^{2m-1}v]_{C^{\frac{\nu}{2m},\nu}(Q^{+}_{r_1})}\le C \|v\|_{W^{k,2m,h+2m}_{t,x_{1},x';p}(Q^{+}_{r_1})}\le C\|v\|_{L_{p}(Q_{r_2}^+)}.
\end{equation}
Since $v_{t}$ satisfies the same equation as $v$, we have
\begin{equation}\label{eq3.46}
[v_t]_{C^{\frac{\nu}{2m},\nu}(Q^{+}_{r_1})}\le C\|v_t\|_{L_{p}(Q_{r_2}^+)}.
\end{equation}
In order to show \eqref{eq:estformoe}, we need to apply the following Poincar\'e type inequality for solutions to equations satisfying the Lopatinskii--Shapiro condition. Its proof is postponed to the end of this section.
\begin{lemma}\label{lemma:poly}
Let $v\in W^{1,2m}_{p}(Q_{r_2}^{+})$ be a solution to the homogeneous problem
\eqref{prob:iterationlemma}. Then there exists a polynomial $P$ of order $2m-2$ such that
$v-P$ satisfies \eqref{prob:iterationlemma} and
there exists a constant $C=C(d,m,p,K,b_{j\beta},r_2)>0$ such that
\begin{equation}\label{eq:poincLemmaPoly}
\|D^{\alpha}(v-P)\|_{L_{p}(Q_{r_2}^{+})}\le C\|D^{2m-1}v\|_{L_{p}(Q_{r_2}^{+})}
\end{equation}
for $|\alpha|\in\{0,\ldots,2m-2\}$.
\end{lemma}
			
By \eqref{eq:iterationlemmaspace} and Lemma \ref{lemma:poly} there exists a polynomial $P$ of order $2m-2$ such that
\begin{equation*}
\begin{aligned}
&[D^{2m-1}v]_{C^{\frac{\nu}{2m},\nu}(Q_{r_1}^+)}
=[D^{2m-1}(v-P)]_{C^{\frac{\nu}{2m},\nu}(Q_{r_1}^+)}\\
&\le \|v-P\|_{L_{p}(Q_{r_2}^+)}\le C \|D^{2m-1}v\|_{L_{p}(Q_{r_2}^+)},
\end{aligned}
\end{equation*}
from which, since $D_{x'}v$ satisfies the same equation as $v$, we get that
\[
[D^{2m-1}D_{x'}v]_{C^{\frac{\nu}{2m},\nu}(Q_{r_1}^+)}\le C\|D^{2m}v\|_{L_{p}(Q_{r_2}^+)}.
\]
Together with \eqref{eq3.46}, the above inequality yields \eqref{eq:estformoe}.
\end{proof}

Similar to \cite[Corollary 5]{DK11}, from Lemma \ref{lemma:iteration} we obtain the following mean oscillation estimates for $u_{t}$ and $D^{\alpha}u$, for all $0\le|\alpha|\le 2m$ except $D_{x_1}^{2m}u$.	

\begin{lemma}\label{lemma:moe1}
Let $\kappa\geq 16$ and $p\in (1,\infty)$. Let $f\in L_{p,loc}(\overline{\R^{d+1}_{+}})$, $X_0=(t_0,x_0)\in\overline{\R^{d+1}_{+}}$, and $\lambda\geq 0$. Assume that for $r\in(0,\infty)$,  $u\in W^{1,2m}_{p,loc}(\overline{\R^{d+1}_{+}})$ satisfies $u_{t}+(A_0+\lambda)u=f$ in $Q_{\kappa r}^{+}(X_0)$ and $B_j u|_{x_1=0}=0$ on $Q_{\kappa r}(X_0)\cap\{x_{1}=0\}$, $j=1,\ldots,m$.
Assume that for some $\theta\in(0,\pi/2)$ the \ref{as:LScond}-condition is satisfied. Then
\begin{multline}\label{eq:lemmamoe1}
(|u_{t}-(u_{t})_{Q^{+}_{r}(X_0)}|)_{Q_{r}^{+}(X_0)}
+\sum_{\substack{|\alpha|\le 2m\\ \alpha_1<2m}}\lambda^{1-\frac{|\alpha|}{2m}}(|D^{\alpha}u-(D^{\alpha}u)_{Q_{r}^{+}(X_0)}|)_{Q_{r}^{+}(X_0)}\\
\le C\kappa^{-(1-\frac{1}{p})}\sum_{|\alpha|\le 2m}\lambda^{1-\frac{|\alpha|}{2m}}(|D^{\alpha}u|^{p})^{\frac{1}{p}}_{Q^{+}_{\kappa r}(X_0)}+C\kappa^{\frac{d+2m}{p}}(|f|^{p})^{\frac{1}{p}}_{Q^{+}_{\kappa r}(X_0)},
\end{multline}
where $C=C(\theta,d,m,K,p,b_{j\beta})>0$ is a constant.
\end{lemma}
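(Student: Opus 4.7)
The plan is to decompose $u = v + w$ in the spirit of Dong--Kim \cite{DK11, DK16}: the piece $w$ will absorb the source $f$ and be controlled by the global $L_p$-theory of Lemma~\ref{lemma:estconstcoeff}, while $v := u - w$ will solve a homogeneous equation and inherit Hölder regularity from Lemma~\ref{lemma:iteration}. After a translation I may assume $X_0 = (0,0)$. First I would apply Lemma~\ref{lemma:estconstcoeff} with $p = q$, zero boundary data, and source $f \mathbf{1}_{Q^+_{\kappa r}}$ to produce $w \in W^{1,2m}_p(\R^{d+1}_+)$ satisfying $w_t + (A_0 + \lambda)w = f \mathbf{1}_{Q^+_{\kappa r}}$ with $B_j w|_{x_1 = 0} = 0$ and the a priori bound
\begin{equation*}
\|w_t\|_{L_p(\R^{d+1}_+)} + \sum_{|\alpha| \le 2m} \lambda^{1 - |\alpha|/(2m)} \|D^\alpha w\|_{L_p(\R^{d+1}_+)} \le C \|f\|_{L_p(Q^+_{\kappa r})}.
\end{equation*}
Then $v := u - w$ solves the homogeneous problem $v_t + (A_0 + \lambda) v = 0$ in $Q^+_{\kappa r}$ with $B_j v|_{x_1 = 0} = 0$ on $Q_{\kappa r} \cap \{x_1 = 0\}$.

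Next I would strip off the $\lambda$-term by the Agmon lifting: set $V(t, x, y) := v(t, x) \cos(\lambda^{1/(2m)} y)$ for $(t,x) \in Q^+_{\kappa r}$ and $y \in \R$. Since $D_y^{2m} \cos(\lambda^{1/(2m)} y) = \lambda \cos(\lambda^{1/(2m)} y)$, the function $V$ solves the homogeneous, $\lambda$-free equation $V_t + (A_0 + D_y^{2m}) V = 0$ with $B_j V|_{x_1 = 0} = 0$. The augmented operator $A_0 + D_y^{2m}$ still satisfies (E)$_\theta$ and (LS)$_\theta$ in the extended half-space $\R^d_+ \times \R_y$ with respect to the same $B_j$'s, which do not involve $y$. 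I would then apply the $(d+1)$-dimensional analog of Lemma~\ref{lemma:iteration} to $V$, rescale parabolically so that $Q^+_{\kappa r} \times (-\kappa r, \kappa r)$ becomes unit-sized, and restrict to $y = 0$ (where $V(t,x,0) = v(t,x)$ and the $(t,x)$-derivatives of $V$ coincide with those of $v$). This yields for $\nu = 1 - 1/p$ a Hölder bound in which both $[v_t]_{C^{\nu/(2m),\nu}(Q^+_{\kappa r/2})}$ and $\lambda^{1 - |\alpha|/(2m)} [D^\alpha v]_{C^{\nu/(2m),\nu}(Q^+_{\kappa r/2})}$ (for $|\alpha| \le 2m$, $\alpha_1 < 2m$) are dominated by $C(\kappa r)^{-\nu}$ times the weighted $L_p$-average $\sum_{|\alpha|\le 2m} \lambda^{1 - |\alpha|/(2m)} (|D^\alpha v|^p)^{1/p}_{Q^+_{\kappa r}}$. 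The exclusion $\alpha_1 = 2m$ is structural: Lemma~\ref{lemma:iteration} controls only derivatives containing at least one tangential (non-$x_1$) factor, since the pure normal derivative $D_{x_1}^{2m} v$ cannot be recovered from the homogeneous equation alone.

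Finally, for each $\phi \in \{u_t\} \cup \{D^\alpha u : |\alpha| \le 2m,\ \alpha_1 < 2m\}$, I would write $\phi = \phi^{(v)} + \phi^{(w)}$. The smooth part satisfies
\begin{equation*}
(|\phi^{(v)} - (\phi^{(v)})_{Q^+_r}|)_{Q^+_r} \le C r^\nu [\phi^{(v)}]_{C^{\nu/(2m),\nu}(Q^+_r)} \le C \kappa^{-\nu} (|\phi^{(v)}|^p)^{1/p}_{Q^+_{\kappa r}}
\end{equation*}
by the Hölder bound from Step~2 and the identity $r^\nu (\kappa r)^{-\nu} = \kappa^{-\nu}$, while the forcing part admits the crude estimate
\begin{equation*}
(|\phi^{(w)} - (\phi^{(w)})_{Q^+_r}|)_{Q^+_r} \le 2 (|\phi^{(w)}|^p)^{1/p}_{Q^+_r} \le 2 |Q^+_r|^{-1/p} \|\phi^{(w)}\|_{L_p(\R^{d+1}_+)},
\end{equation*}
which combined with Step~1 contributes $C \kappa^{(d+2m)/p} (|f|^p)^{1/p}_{Q^+_{\kappa r}}$ after multiplication by $\lambda^{1 - |\alpha|/(2m)}$. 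Summing over $\alpha$ and using $\phi^{(v)} = \phi - \phi^{(w)}$ to fold the residual $L_p$-contribution from $\phi^{(w)}$ into the $\kappa^{-\nu}$ term delivers \eqref{eq:lemmamoe1}. The hardest ingredient will be the execution of Step~2: verifying that the augmented operator $A_0 + D_y^{2m}$ satisfies (LS)$_\theta$ on $\R^d_+ \times \R_y$ with the same $B_j$'s, and that Lemma~\ref{lemma:iteration} transfers to the $(d+1)$-dimensional setting with constants uniform in $\lambda$ after rescaling, which is precisely what produces the crucial $\kappa^{-(1-1/p)}$ decay on the right-hand side of \eqref{eq:lemmamoe1}.
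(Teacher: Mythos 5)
Your decomposition $u = v + w$, the use of Lemma~\ref{lemma:estconstcoeff} for $w$, the Agmon lift into an extra variable $y$ to absorb $\lambda$, the application of the $(d+1)$-dimensional version of Lemma~\ref{lemma:iteration}, and the final combination step are all essentially the same as the paper's Steps 1--4 for its ``Case 2''. Two issues, however, need to be addressed before the argument closes.

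\textbf{The Agmon lift must use $\cos + \sin$, not $\cos$ alone.} You set $V(t,x,y) = v(t,x)\cos(\lambda^{1/(2m)}y)$ and then want to recover $\lambda^{1-|\alpha|/(2m)}[D^\alpha v]$ from the Hölder seminorm of $D_y^{2m-|\alpha|}D^\alpha V$. But when you restrict to $y=0$ you pick up the factor $\big|D_y^{2m-|\alpha|}\cos(\lambda^{1/(2m)}\cdot)\big|_{y=0}\big|$, which equals $\lambda^{1-|\alpha|/(2m)}$ only when $2m-|\alpha|$ is even and \emph{vanishes} when $2m-|\alpha|$ is odd, since every odd-order derivative of $\cos$ at $0$ is $0$. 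The inequality
$\lambda^{1-|\alpha|/(2m)}[D^\alpha v]_{C^{\nu/(2m),\nu}} \le [D_y^{2m-|\alpha|}D^\alpha V]_{C^{\nu/(2m),\nu}}$
therefore fails whenever $2m-|\alpha|$ is odd. The paper fixes this by taking $\zeta(y)=\cos(\lambda^{1/(2m)}y)+\sin(\lambda^{1/(2m)}y)$, which still satisfies $D_y^{2m}\zeta = \lambda\zeta$ and $\zeta(0)=1$ but in addition has $|D_y^k\zeta(0)| = \lambda^{k/(2m)}$ for \emph{every} $k$, so the extraction works for all $|\alpha|$.

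\textbf{The interior case is not covered by the translation.} You say ``after a translation I may assume $X_0=(0,0)$'', but one can only translate in $t$ and $x'$; translating in $x_1$ would move the half-space. If $x_0^1$ is large enough that $Q_{\kappa r}^+(X_0)=Q_{\kappa r}(X_0)$ lies entirely in the interior, the boundary condition is vacuous and your decomposition (which prescribes $B_jw|_{x_1=0}=0$ to define $w$) doesn't localize the problem near $X_0$; moreover Lemma~\ref{lemma:iteration} as stated concerns cylinders centered on the boundary and is not directly applicable. The paper handles this by first scaling to $r=8/\kappa$ and then splitting into $x_0^1\ge 1$ (interior, handled by the whole-space result from \cite{DK16} with no LS-condition needed) and $x_0^1\in[0,1]$ (boundary case, which is the one you treat). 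You need the analogous case distinction, or a justification that your construction of $w$ still produces the requisite estimate when the cylinder does not reach the boundary.

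Once both points are fixed, the argument does line up with the paper's, including the scaling computation producing the $\kappa^{-(1-1/p)}$ factor, the crude $L_p$ estimate for the $w$-part giving the $\kappa^{(d+2m)/p}$ factor, and the folding of the residual $w$-contribution into the $\kappa^{-\nu}$ term via $v=u-w$. You are also right that the verification of (LS)$_\theta$ for $A_0 + D_y^{2m}$ with the $y$-independent $B_j$'s is a nontrivial ingredient; the paper uses it implicitly without comment.
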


\begin{proof}
Using a scaling argument, it suffices to prove \eqref{eq:lemmamoe1} only for $r=8/\kappa$.
Indeed, assume that the inequality \eqref{eq:lemmamoe1} holds true for $r=8/\kappa$. For a given $r\in (0,\infty)$, let $r_0=8/\kappa$, $R=r/r_0$ and $v(t,x)=u(R^{2m}t,Rx)$. Then $v$ satisfies $B_{j}v=0$ on $Q_{\kappa r_0}^{+}(Z_0)\cap \{x_{1}=0\}$ and
\begin{equation}\label{eq:scalinglemmamoe1}
v_t(t,x)+\sum_{|\alpha|=2m}{\color{red}\bar a_{\alpha}}
D^{\alpha}v(t,x)+\lambda R^{2m} v(t,x)= R^{2m}f(R^{2m}t,Rx)
\end{equation}
on $Q_{\kappa r_0}^{+}(Z_0)$, where $Z_0=(R^{-2m}t_0,R^{-1}x_0)\in\overline{\R^{d+1}_{+}}$. Then, by \eqref{eq:lemmamoe1} applied to \eqref{eq:scalinglemmamoe1}, we have
\begin{align*}
&(|v_{t}-(v_{t})_{Q^{+}_{r_0}(Z_0)}|)_{Q_{r_0}^{+}(Z_0)}
+\sum_{\substack{|\alpha|\le 2m\\ \alpha_1<2m}}\lambda^{1-\frac{|\alpha|}{2m}}R^{2m-|\alpha|}(|D^{\alpha}v-(D^{\alpha}v)_{Q_{r_0}^{+}(Z_0)}|)_{Q_{r_0}^{+}(Z_0)}\\
&\le C\kappa^{-(1-\frac{1}{p})}\sum_{|\alpha|\le 2m}\lambda^{1-\frac{|\alpha|}{2m}}R^{2m-|\alpha|}(|D^{\alpha}v|^{p})^{\frac{1}{p}}_{Q^{+}_{\kappa r_0}(Z_0)}+C\kappa^{\frac{d+2m}{p}}R^{2m}(|f|^{p})^{\frac{1}{p}}_{Q^{+}_{\kappa r_0}(Z_0)}.
\end{align*}
Note that
\[
(D^{\alpha}v)_{Q_{r_0}^{+}(Z_0)}=R^{|\alpha|}(D^{\alpha}u)_{Q_{r}^{+}(X_0)}\quad  {\rm and}\quad   (v_{t})_{Q_{r_0}^{+}(Z_0)}=R^{2m}(u_{t})_{Q_{r}^{+}(X_0)},\]
so the above inequality implies \eqref{eq:lemmamoe1} for arbitrary $r\in (0,\infty)$.
			
We now assume $r=8/\kappa$ and consider two cases, where we denote by $x_0^1$ the first coordinate of $x_0$.
			
\emph{Case 1: $x_0^1\geq 1$}. In this case, $Q_{\kappa r/8}^{+}(X_0)=Q_{\kappa r/8}(X_0)$. The proof of \eqref{eq:lemmamoe1} then follows from \cite[Lemma 5.7]{DK16}, with $\kappa\geq 2$ instead of $\kappa\geq 8$ there. Note that in this case, the \ref{as:LScond}-condition is not needed.
			
\emph{Case 2: $x_0^1\in[0,1]$}. We denote $Y_0:=(t_0,0,x'_{0})$ and we set $Q'_{\kappa r}(Y_0):=(t_0-(\kappa r)^{2m},t_0)\times B_{\kappa r}(x_0')$.
Observe that
\[Q_{r}^{+}(X_0)\subset Q_{2}^{+}(Y_0)\subset Q_4^{+}(Y_0)\subset Q_{6}^{+}(Y_0)\subset Q_{\kappa r}^{+}(X_0).\]
To prove \eqref{eq:lemmamoe1}, we proceed by three steps.
		
\emph{Step 1.} We assume for simplicity $Y_0=(0,0)$, since a translation in $t$ and $x'$ then gives the result for general $Y_0$. Decompose $u=v+w$ where:
\begin{itemize}
\item $w\in W^{1,2m}_{p}(\R^{d+1}_{+})$ is the solution to the inhomogeneous problem
\begin{equation}\label{prob:VMOtimespaceW}
\begin{dcases}
w_t + (A_0+\lambda)w=f\zeta & {\rm in}\ \R\times\R^{d}_{+}\\
B_j w\big|_{x_{1}=0}=0 & {\rm on}\ \partial\R^{d+1}_{+},\ j=1,\ldots,m\\
w(-6^{2m},\cdot)=0.
\end{dcases}
\end{equation}
where $\zeta\in C_0^{\infty}(\R^{d+1}_{+})$ satisfies $\zeta=1$ in $(-4^{2m},0)\times B_{4}$ and $\zeta=0$ outside $(-6^{2m},6^{2m})\times B_{6}$.
\item $v\in W^{1,2m}_{p,loc}(\R^{d+1}_{+})$ is the solution to the homogeneous problem
\begin{equation}\label{prob:VMOtimespaceV}
\begin{dcases}
v_t + (A_0+\lambda)v=0 & {\rm in}\ Q_{4}^{+}\\
B_j v\big|_{x_1=0}=0 & {\rm on}\ Q_{4}\cap\{x_1=0\},\ j=1,\ldots,m.
\end{dcases}
\end{equation}
\end{itemize}
		
\emph{Step 2.} It follows directly from Lemma \ref{lemma:estconstcoeff} with $g_j\equiv 0$ that there exists a unique solution $w\in W^{1,2m}_{p}(\R^{d+1}_+)$ of \eqref{prob:VMOtimespaceW} that satisfies
\begin{multline}\label{eq:proofmoe1}
\|w_{t}\|_{L_{p}(\R^{d+1}_{+})}+\sum_{|\alpha|\le 2m}\lambda^{1-\frac{|\alpha|}{2m}}\|D^{\alpha}w\|_{L_{p}(\R^{d+1}_{+})}\le C\|f\zeta\|_{L_{p}(\R^{d+1}_+)}\\
\le C\|f\|_{L_{p}(Q_{6}^{+})}\le C\|f\|_{L_{p}(Q_{\kappa r}^{+}(X_0))},
\end{multline}
where $C=C(\theta,K,d,m,p,b_{j\beta})$.
In particular,
\begin{equation}\label{eq:proofmoew}
(|w_{t}|^{p})_{Q_{r}^{+}}^{1/p}+\sum_{|\alpha|\le 2m}\lambda^{1-\frac{|\alpha|}{2m}}(|D^{\alpha}w|^{p})^{1/p}_{Q_{r}^{+}}\le C\kappa^{\frac{d+2m}{p}}(|f|^{p})^{1/p}_{Q_{\kappa r}^{+}}.
\end{equation}
			
\emph{Step 3.} We claim that there exists a constant $C=C(\theta,p,K,d,m,b_{j\beta})$ such that
\begin{multline}\label{eq:proofmoev}
(|v_{t}-(v_{t})_{Q_{r}^{+}}|)_{Q_r^+(X_0)}+
\sum_{\substack{|\alpha|\le 2m\\
\alpha_1<2m}}\lambda^{1-\frac{|\alpha|}{2m}}
(|D^{\alpha}v-(D^{\alpha}v)_{Q_{r}^{+}(X_0)}|)_{Q_{r}^{+}(X_0)}\\ \le C\kappa^{-\nu}\sum_{|\alpha|\le 2m}\lambda^{1-\frac{|\alpha|}{2m}}(|D^{\alpha}v|^{p})^{1/p}_{Q^{+}_{\kappa r}(X_0)}.
\end{multline}
To show the claim, we first assume $\lambda=0$. We apply Lemma \ref{lemma:iteration}
with the choice $r_1=2$ and $r_2=4$, and we get
\begin{equation}\label{eq:estformoe1}
[v_t]_{C^{\frac{\nu}{2m},\nu}(Q_{2}^+)}
+[D^{2m-1}D_{x'}v]_{C^{\frac{\nu}{2m},\nu}(Q_{2}^+)}
\le C\|v_{t}\|_{L_{p}(Q_4^+)}+C\|D^{2m}v\|_{L_{p}(Q_4^+)},
\end{equation}
where $\nu=1-\frac{1}{p}$ and $C=C(\theta,p,K,d,m,b_{j\beta})$.
		
For $\lambda>0$ we follow the proof of \cite[Lemma 3]{DK11}, based on an idea by S. Agmon. Consider for $y\in\R$,
\[
\zeta(y)=\cos(\lambda^{\frac{1}{2m}}y)+\sin(\lambda^{\frac{1}{2m}}y).
\]
Note that
\[
D_{y}^{2m}\zeta(y)=\lambda\zeta(y),\quad  \zeta(0)=1,\quad  |D^{2m-|\alpha|}\zeta(0)|=\lambda^{1-\frac{|\alpha|}{2m}}.
\]
Denote by $(t,z)=(t,x,y)\in\R^{d+2}_{+}$, where $z=(x,y)\in\R^{d+1}_{+}$ with $x\in\R^d_+$, and set
\[
\tilde{v}(t,z)=v(t,x)\zeta(y),\quad \tilde{Q}_{r}^+=(-r^{2m},0)\times\big\{|z|<r,z\in\R^{d+1}_{+}\big\}.
\]
Since $v$ satisfies \eqref{prob:VMOtimespaceV} on $Q_4^+$, $\tilde{v}$ satisfies
\begin{equation*}
\begin{dcases}
\tilde{v}_{t}+ A_0\tilde{v} + D^{2m}_{y}\tilde{v}=0 & {\rm in}\ \tilde{Q}_{4}^{+}\\
B_{j}\tilde v|_{x_1=0}=0 & {\rm on}\ \tilde{Q}_{4}^{+}\cap \{x_1=0\}.
\end{dcases}
\end{equation*}
Thus, we can proceed as in \eqref{eq:estformoe1} and get for $r=8/\kappa$, $ \kappa\geq 16$, and $|\alpha|\le 2m$ with $\alpha_{1}<2m$,
\begin{equation}\label{eq:moefortildev}
[\tilde{v}_{t}]_{C^{\frac{\nu}{2m},\nu}(\tilde{Q}_{2}^+)}+[D^{2m-|\alpha|}_{y}D^{\alpha}\tilde{v}]_{C^{\frac{\nu}{2m},\nu}(\tilde{Q}_{2}^+)}
\le C\|\tilde{v}_{t}\|_{L_{p}(\tilde{Q}_4^+)}+C\|D^{2m}\tilde{v}\|_{L_{p}(\tilde{Q}_4^+)}.
\end{equation}
Since   $|D^{2m-|\alpha|}\zeta(0)|=\lambda^{1-\frac{|\alpha|}{2m}}$,
\begin{equation*}
\lambda^{1-\frac{|\alpha|}{2m}}[D^{\alpha}v]_{C^{\frac{\nu}{2m},\nu}(Q_{2}^{+})}\le [D^{2m-|\alpha|}_{y}D^{\alpha}\tilde{v}]_{C^{\frac{\nu}{2m},\nu}(\tilde{Q}_{2}^+)}.
\end{equation*}
Observe now that
\begin{equation*}
\begin{aligned}
(|D^{\alpha} v-(D^{\alpha}v)_{Q_{r}^{+}(X_0)}|)_{Q_{r}^{+}(X_0)}&\le C \kappa^{-\nu}[D^{\alpha}v]_{C^{\frac{\nu}{2m},\nu}(Q_{r}^{+}(X_0))}\\
&\le C \kappa^{-\nu}[D^{\alpha}v]_{C^{\frac{\nu}{2m},\nu}(Q_{2}^{+})}
\end{aligned}
\end{equation*}
and the same holds for $v_t$. This implies that
\begin{equation*}
\begin{aligned}
&(|v_{t}-(v_{t})_{Q_{r}^{+}(X_0)}|)_{Q_{r}^{+}(X_0)}+\lambda^{1-\frac{|\alpha|}{2m}}(|D^{\alpha}v-(D^{\alpha}v)_{Q_{r}^{+}(X_0)}|)_{Q_{r}^{+}(X_0)}\\
&\le C\kappa^{-\nu}[v_{t}]_{C^{\frac{\nu}{2m},\nu}(Q_{2}^{+})}+C\kappa^{-\nu}\lambda^{1-\frac{|\alpha|}{2m}}[D^{\alpha}v]_{C^{\frac{\nu}{2m},\nu}(Q_{2}^{+})}\\
&\le C\kappa^{-\nu}[\tilde{v}_{t}]_{C^{\frac{\nu}{2m},\nu}(\tilde{Q}_{2}^+)}+ C\kappa^{-\nu}[D^{2m-|\alpha|}_{y}D^{\alpha}\tilde{v}]_{C^{\frac{\nu}{2m},\nu}(\tilde{Q}_{2}^{+})}.
\end{aligned}
\end{equation*}
Therefore, the left-hand side of \eqref{eq:proofmoev} is bounded by that of \eqref{eq:moefortildev}.

Since $D^{2m}\tilde{v}$ is a linear combination of terms such as
\[
\lambda^{ 1-\frac{k}{2m}}\cos(\lambda^{\frac{1}{2m}}y)D^{k}_{x}u(t,x),\quad \lambda^{ 1-\frac{k}{2m}}\sin(\lambda^{\frac{1}{2m}}y)D^{k}_{x}u(t,x),\ \ k=0,\ldots,2m,
\]
we have
$$
\|D^{2m}\tilde{v}\|_{L_{p}(\tilde{Q}_{4}^{+})}\le C\sum_{|\alpha|\le 2m}\lambda^{1-\frac{|\alpha|}{2m}} \|D^{\alpha}v\|_{L_{p}(Q_{\kappa r}^{+}(X_0))}.
$$
This together with $v_t = -A_0v$ yields
\[
C\kappa^{-\nu}\|\tilde{v}_{t}\|_{L_{p}(\tilde{Q}_4^+)}+C\kappa^{-\nu}\|D^{2m}\tilde{v}\|_{L_{p}(\tilde{Q}_4^+)}\le C\kappa^{-\nu} \sum_{|\alpha|\le 2m}\lambda^{1-\frac{|\alpha|}{2m}} \|D^{\alpha}v\|_{L_{p}(Q_{\kappa r}^{+}(X_0))},
\]
which shows that the right-hand side of \eqref{eq:moefortildev} is bounded by that of \eqref{eq:proofmoev}.
			
\emph{Step 4.}
Since $u=w+v$, by \eqref{eq:proofmoew} and \eqref{eq:proofmoev} we get
\begin{equation*}
\begin{aligned}
&(|u_{t}-(u_{t})_{Q_{r}}|)_{Q_r^+(X_0)}+\sum_{|\alpha|\le 2m,\alpha_1<2m}\lambda^{1-\frac{|\alpha|}{2m}}
(|D^{\alpha}u-(D^{\alpha}u)_{Q_{r}^{+}(X_0)}|)_{Q_{r}^{+}(X_0)}\\
&\stackrel{(i)}{\le} C(|u_{t}-(v_{t})_{Q_{r}}|)_{Q_r^+(X_0)}+C\sum_{|\alpha|\le 2m,\alpha_1<2m}\lambda^{1-\frac{|\alpha|}{2m}}(|D^{\alpha} u-(D^{\alpha}v)_{Q_{r}^{+}(X_0)}|)_{Q_{r}^{+}(X_0)}\\
&\le C(|v_{t}-(v_{t})_{Q_{r}}|)_{Q_r^+(X_0)}+C\sum_{|\alpha|\le 2m,\alpha_1<2m}\lambda^{1-\frac{|\alpha|}{2m}}(|D^{\alpha} v-(D^{\alpha}v)_{Q_{r}^{+}(X_0)}|)_{Q_{r}^{+}(X_0)}\\
&\quad +C(|w_{t}|^{p})^{1/p}_{Q_r^+(X_0)}+C\sum_{|\alpha|\le 2m,\alpha_1<2m}\lambda^{1-\frac{|\alpha|}{2m}}(|D^{\alpha}w|^{p})^{1/p}_{Q_{r}^{+}(X_0)}\\
&\le C \kappa^{-\nu}\sum_{|\alpha|\le 2m,\alpha_1<2m}\lambda^{1-\frac{|\alpha|}{2m}}(|D^{\alpha}v|^{p})^{1/p}_{Q_{\kappa r}^{+}(X_0)}+C \kappa^{\frac{d+2m}{p}}(|f|^{p})^{1/p}_{Q_{\kappa r}^{+}(X_0)}\\
&\stackrel{(ii)}{\le}C \kappa^{-\nu}\sum_{|\alpha|\le 2m,\alpha_1<2m}\lambda^{1-\frac{|\alpha|}{2m}}
(|D^{\alpha}u|^{p})^{1/p}_{Q_{\kappa r}^{+}(X_0)}+C \kappa^{\frac{d+2m}{p}}(|f|^{p})^{1/p}_{Q_{\kappa r}^{+}(X_0)},
\end{aligned}
\end{equation*}
where in (i) we used the fact that for any constant $c_1,c_2$ it holds
\begin{align*}
(|u_t-(u_t)_{Q_{r}^{+}(X_0)}|)_{Q_{r}^{+}(X_0)}&\le
2(|u_t - c_1|)_{Q_{r}^{+}(X_0)},
\\
(|D^{\alpha} u-(D^{\alpha}u)_{Q_{r}^{+}(X_0)}|)_{Q_{r}^{+}(X_0)}&\le
2(|D^{\alpha} u-c_2|)_{Q_{r}^{+}(X_0)},
\end{align*}
 and we took $c_1=(v_t)_{Q_{r}^{+}(X_0)}$, $c_2=(D^{\alpha}v)_{Q_{r}^{+}(X_0)}$, while in (ii) we used $v=u-w$ and \eqref{eq:proofmoe1}.
\end{proof}
	
We now use the idea of freezing the coefficients as in \cite[Lemma 5.9]{DK16}, to obtain the following mean oscillation estimate on $Q_{r}^{+}(X_0)$ for operators with variable coefficients when $r$ is small.
\begin{lemma}\label{lemma:moe2}
Let $\lambda\geq 0$ and $\kappa\geq 16$. Assume that $A$ and $B_j$, $j=1,\ldots, m$, satisfy conditions \ref{as:operatorALS}, \ref{as:operatorBLS}, and \ref{as:LScond} for some $\theta\in(0,\pi/2)$, and assume the lower-order coefficients of $A$ and $B_j$ to be all zero. Let $\mu,\varsigma\in(1,\infty)$, $\frac{1}{\varsigma}+\frac{1}{\mu}=1$.
Then, for $r\in(0,R_0/\kappa]$, $X_0\in\overline{\R^{d+1}_{+}}$ and $u\in W^{1,2m}_{p\mu,loc}(\overline{\R^{d+1}_{+}})$ satisfying $u_{t}+(A(t)+\lambda)u=f$ in $Q_{\kappa r}^{+}(X_0)$ and $B_j u|_{x_1=0}=0$ on $Q_{\kappa r}(X_0)\cap\{x_{1}=0\}$, $j=1,\ldots,m$, where $f\in L_{p,loc}(\overline{\R^{d+1}_{+}})$,  we have
\begin{multline*}
(|u_{t}-(u_{t})_{Q^{+}_{r}(X_0)}|)_{Q^{+}_{r}(X_0)}
+\sum_{\substack{|\alpha|\le 2m\\ \alpha_1<2m}}\lambda^{1-\frac{|\alpha|}{2m}}(|D^{\alpha}u-(D^{\alpha}u)_{Q^{+}_{r}(X_0)}|)_{Q^{+}_{r}(X_0)}\\
\le C\kappa^{-(1-\frac{1}{p})}\sum_{|\alpha|\le 2m}\lambda^{1-\frac{|\alpha|}{2m}}(|D^{\alpha}u|^{p})^{\frac{1}{p}}_{Q^{+}_{\kappa r}(X_0)}+C\kappa^{\frac{d+2m}{p}}(|f|^{p})^{\frac{1}{p}}_{Q^{+}_{\kappa r}(X_0)}\\
+C\kappa^{\frac{d+2m}{p}}\rho^{\frac{1}{p\varsigma}}(|D^{2m}u|^{p\mu})^{\frac{1}{p\mu}}_{Q^{+}_{\kappa r}(X_0)},
\end{multline*}
where $C=C(\theta,d,m,\mu,K,p,b_{j\beta})>0$.
\end{lemma}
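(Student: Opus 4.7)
The plan is to use the standard freezing-of-coefficients technique, localized to the parabolic cylinder $Q^{+}_{\kappa r}(X_0)$, in order to reduce to the constant-coefficient case handled by Lemma~\ref{lemma:moe1}. Concretely, for each multi-index $\alpha$ with $|\alpha|=2m$, I would set
\[
\bar a_{\alpha}:=(a_{\alpha})_{Q^{+}_{\kappa r}(X_0)},\qquad A_{0}u:=\sum_{|\alpha|=2m}\bar a_{\alpha}D^{\alpha}u.
\]
Since the lower-order terms of $A$ are zero by assumption, $u$ satisfies in $Q^{+}_{\kappa r}(X_0)$ the constant-coefficient equation
\[
u_{t}+(A_{0}+\lambda)u=\tilde f,\qquad\text{where }\tilde f:=f+\sum_{|\alpha|=2m}\bigl(\bar a_{\alpha}-a_{\alpha}\bigr)D^{\alpha}u,
\]
with homogeneous boundary conditions $B_{j}u|_{x_{1}=0}=0$, since the boundary operators have constant leading coefficients and vanishing lower-order terms. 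The operator $A_{0}$ has constant coefficients bounded by $K$ and inherits the parameter-ellipticity and Lopatinskii--Shapiro conditions from $A$ (the relevant symbols only involve the leading part at a fixed point in the closure of $\R^{d+1}_{+}$, which is covered by the $2R_{0}$-neighborhood stipulation in~\ref{as:LScond}).

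Next, I would apply Lemma~\ref{lemma:moe1} to $u$ with this $A_{0}$ and right-hand side $\tilde f$, obtaining
\begin{multline*}
(|u_{t}-(u_{t})_{Q^{+}_{r}(X_0)}|)_{Q^{+}_{r}(X_0)}
+\sum_{\substack{|\alpha|\le 2m\\ \alpha_1<2m}}\lambda^{1-\frac{|\alpha|}{2m}}(|D^{\alpha}u-(D^{\alpha}u)_{Q^{+}_{r}(X_0)}|)_{Q^{+}_{r}(X_0)}\\
\le C\kappa^{-(1-\frac{1}{p})}\sum_{|\alpha|\le 2m}\lambda^{1-\frac{|\alpha|}{2m}}(|D^{\alpha}u|^{p})^{\frac{1}{p}}_{Q^{+}_{\kappa r}(X_0)}
+C\kappa^{\frac{d+2m}{p}}(|\tilde f|^{p})^{\frac{1}{p}}_{Q^{+}_{\kappa r}(X_0)}.
\end{multline*}
It then remains to split $(|\tilde f|^{p})^{1/p}_{Q^{+}_{\kappa r}(X_0)}$ into the contribution from $f$ and from the commutator term $\sum_{|\alpha|=2m}(\bar a_{\alpha}-a_{\alpha})D^{\alpha}u$.

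For the latter, I would apply H\"older's inequality with exponents $\mu,\varsigma$ to get
\[
\bigl(|(\bar a_{\alpha}-a_{\alpha})D^{\alpha}u|^{p}\bigr)^{\frac{1}{p}}_{Q^{+}_{\kappa r}(X_0)}
\le \bigl(|\bar a_{\alpha}-a_{\alpha}|^{p\varsigma}\bigr)^{\frac{1}{p\varsigma}}_{Q^{+}_{\kappa r}(X_0)}
\bigl(|D^{\alpha}u|^{p\mu}\bigr)^{\frac{1}{p\mu}}_{Q^{+}_{\kappa r}(X_0)}.
\]
Using the uniform bound $|\bar a_{\alpha}-a_{\alpha}|\le 2K$, the first factor is controlled by
\[
(2K)^{1-\frac{1}{p\varsigma}}\bigl(|\bar a_{\alpha}-a_{\alpha}|\bigr)^{\frac{1}{p\varsigma}}_{Q^{+}_{\kappa r}(X_0)}
\le C\,\rho^{\frac{1}{p\varsigma}},
\]
where the last inequality uses $\kappa r\le R_{0}$ together with Assumption~\ref{ass:VMO}($\rho$). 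Combining these ingredients yields the claimed estimate.

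The main obstacle is verifying that Lemma~\ref{lemma:moe1} really applies to the frozen operator $A_{0}$ with the original boundary operators $B_{j}$: one must check that the Lopatinskii--Shapiro condition is stable under replacing the leading coefficients of $A$ by their averages over $Q^{+}_{\kappa r}(X_0)$. This is precisely why the statement of~\ref{as:LScond} in this paper is formulated at interior points $x_{0}$ in a $2R_{0}$-neighborhood of $\partial\R^{d+1}_{+}$ rather than only on the boundary, as emphasized in the remark following that assumption; given this, the frozen symbol $A_{0,\sharp}(\xi,D_{x_{1}})$ remains admissible and Lemma~\ref{lemma:moe1} is directly applicable. The rest of the argument is the routine commutator estimate via H\"older.
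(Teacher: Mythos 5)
Your strategy is the right one in spirit, but there is a genuine gap in the freezing step that the paper's proof carefully avoids. You define the frozen operator $A_{0}$ using the \emph{averaged} coefficients $\bar a_{\alpha}:=(a_{\alpha})_{Q^{+}_{\kappa r}(X_0)}$, and then assert that $A_{0}$ ``inherits'' the Lopatinskii--Shapiro condition \ref{as:LScond} from $A$, pointing to the $2R_{0}$-neighborhood formulation. But that assumption only gives \ref{as:LScond} for the pointwise-frozen symbol $A_{\sharp}(t_{0},x_{0},\xi,D_{x_{1}})$ at actual points $(t_{0},x_{0})$ with $x_{0}$ near the boundary; the averaged tuple $(\bar a_{\alpha})_{|\alpha|=2m}$ is in general \emph{not} equal to $(a_{\alpha}(t_{0},x_{0}))_{|\alpha|=2m}$ for any $(t_{0},x_{0})$. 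Unlike the parameter-ellipticity condition \ref{as:condell}, which is stable under taking convex combinations because $\Sigma_{\theta}$ is a convex cone, the set of leading symbols for which the Lopatinskii determinant is nonvanishing is open but not convex, so \ref{as:LScond} is not automatically preserved under averaging. Without additional input (e.g., a smallness condition on $\rho$ and a stability argument for the Lopatinskii determinant, neither of which is part of the hypotheses of Lemma \ref{lemma:moe2}), the application of Lemma \ref{lemma:moe1} with your $A_{0}$ is not justified.

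The paper sidesteps this entirely by freezing at a genuine point $(s,y)\in Q^{+}_{\kappa r}(X_{0})$: one writes $u_{t}+(A_{s,y}+\lambda)u=g$ with $A_{s,y}u=\sum_{|\alpha|=2m}a_{\alpha}(s,y)D^{\alpha}u$ and $g=f+\sum_{|\alpha|=2m}(a_{\alpha}(s,y)-a_{\alpha}(t,x))D^{\alpha}u$. Since $x_{0}^{1}\le R_{0}$ and $\kappa r\le R_{0}$ imply $y_{1}\le 2R_{0}$, the point $(s,y)$ lies in the region where \ref{as:LScond} is assumed, so Lemma \ref{lemma:moe1} applies. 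The resulting mean-oscillation estimate has a left-hand side independent of $(s,y)$, so one may then average the \emph{estimate} over $(s,y)\in Q^{+}_{\kappa r}(X_0)$, apply H\"older's inequality with exponents $\mu,\varsigma$, and use the triangle inequality $\dashint\dashint|a_{\alpha}(s,y)-a_{\alpha}(t,x)|\le 2\,\osc(a_{\alpha},Q^{+}_{\kappa r}(X_0))\le 2\rho$ to produce the $\rho^{1/(p\varsigma)}$ factor. Your subsequent H\"older/VMO computation is fine; replacing ``average the coefficients'' with ``average the estimate'' is the one change needed to make the argument rigorous.
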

		
\begin{proof}
Fix $(t_0,x_0)\in\overline{\R^{d+1}_{+}}$. For any $(s,y)\in Q_{\kappa r}^{+}(t_0,x_0)$, set
\[
A_{s,y}u=\sum_{|\alpha|=2m}a_{\alpha}(s,y)D^{\alpha}u.
\]
Then $u$ satisfies
\begin{equation*}
\begin{dcases}
u_t+(A_{s,y}+\lambda)u=g & {\rm in}\ Q_{\kappa r}^{+}\\
B_j u\big|_{x_1=0}=0 & {\rm on}\ Q_{\kappa r}^{+}\cap \{x_1=0\},
\end{dcases}
\end{equation*}
where
\[
g:=f+\sum_{|\alpha|=2m}(a_{\alpha}(s,y)-a_{\alpha}(t,x))D^{\alpha}u.
\]
Note that when $x_0^1\le R_0$, we have $y_1\le 2R_0$ so that the \ref{as:LScond}-condition is satisfied for $A_{s,y}$ and $B_j$.
It follows from Lemma \ref{lemma:moe1} that
\begin{multline}\label{eq:1proofmoe2}
(|u_{t}-(u_{t})_{Q^{+}_{r}(X_0)}|)_{Q^{+}_{r}(X_0)}
+\sum_{|\alpha|\le 2m,\alpha_1<2m}\lambda^{1-\frac{|\alpha|}{2m}}(|D^{\alpha}u-(D^{\alpha}u)_{Q_{r}^{+}(X_0)}|)_{Q_{r}^{+}(X_0)}\\
\le C\kappa^{-(1-\frac{1}{p})}\sum_{|\alpha|\le 2m}\lambda^{1-\frac{|\alpha|}{2m}}(|D^{\alpha}u|^{p})^{\frac{1}{p}}_{Q^{+}_{\kappa r}(X_0)}+C\kappa^{\frac{d+2m}{p}}(|g|^{p})^{\frac{1}{p}}_{Q^{+}_{\kappa r}(X_0)},
\end{multline}
where $C=C(\theta,d,m,K,p,b_{j\beta})$. Note that
\begin{equation}\label{eq:2proofmoe2}
(|g|^{p})^{\frac{1}{p}}_{Q^{+}_{\kappa r}(X_0)}\le (|f|^{p})^{\frac{1}{p}}_{Q^{+}_{\kappa r}(X_0)}+I^{\frac{1}{p}},
\end{equation}
where
\[
I=(|(a_{\alpha}(s,y)-a_{\alpha}(t,x))D^{\alpha}u|^{p})_{Q^{+}_{\kappa r}(X_0)}.
\]
Take now the average of $I$ with respect to $(s,y)$ in $Q^{+}_{\kappa r}(X_0)$.
By H\"{o}lder's inequality it holds that
\begin{equation*}
\begin{aligned}
&\Big( \dashint_{Q^{+}_{\kappa r}(X_0)}I\, ds\,dy \Big)^{\frac{1}{p}}\le\Big(\dashint_{Q^{+}_{\kappa r}(X_0)}(|(a_{\alpha}(s,y)-a_{\alpha}(t,x))D^{\alpha}u|^{p})_{Q^{+}_{\kappa r}(X_0)}\, ds\, dy \Big)^{\frac{1}{p}}\\
&\le \Big(\dashint_{Q^{+}_{\kappa r}(X_0)}(|(a_{\alpha}(s,y)-a_{\alpha}(t,x))|^{p\varsigma})^{\frac{1}{\varsigma}}_{Q^{+}_{\kappa r}(X_0)} \,ds\, dy \Big)^{\frac{1}{p}}(|D^{2m}u|^{p\mu})_{Q^{+}_{\kappa r}(X_0)}^{\frac{1}{p\mu}}.
\end{aligned}
\end{equation*}
Moreover, by the boundedness of the coefficients $a_{\alpha}$, the assumption $r\le R_0/\kappa$ and Assumption \ref{ass:VMO} $(\rho)$, we get
\begin{equation*}
\begin{aligned}
&\Big(\dashint_{Q^{+}_{\kappa r}(X_0)}(|(a_{\alpha}(s,y)-a_{\alpha}(t,x)|^{p\varsigma})_{Q^{+}_{\kappa r}(X_0)})^{\frac{1}{\varsigma}}\Big)^{\frac{1}{p}}\\
&\le\Big(\dashint_{Q^{+}_{\kappa r}(X_0)}(|a_{\alpha}(s,y)-a_{\alpha}(t,x)|)_{Q^{+}_{\kappa r}(X_0)}\,ds\,dy\Big)^{\frac{1}{p\varsigma}}\\
&\le C(\osc(a_{\alpha},Q_{\kappa r}^{+}))^{\frac{1}{p\varsigma}}\le C((a_{\alpha})^{\sharp}_{R_0})^{\frac{1}{p\varsigma}}\le  C\rho^{\frac{1}{p\varsigma}}.
\end{aligned}
\end{equation*}		
This together with \eqref{eq:1proofmoe2} and \eqref{eq:2proofmoe2} gives the desired estimate. When $x_0^1>R_0$, the results follows directly by \cite[Lemma 5]{DK11}, since in this case there are no boundary conditions involved. \end{proof}

We conclude this section with the proof of Lemma \ref{lemma:poly}.
		
\begin{proof}[Proof of Lemma \ref{lemma:poly}] Without loss of generality we can take $r_2=1$. We take for simplicity the center $X_0$ of $Q_{1}^{+}$ to be $(0,0)$. A translation of the coordinates then gives the result for general $X_0\in \partial\R^{d+1}_{+}$.
			
Assume that the polynomial $P$ has the form
\[
P=\sum_{|\alpha|\le 2m-2}{\frac{c_{\alpha}}{\alpha !}}x^{\alpha},\quad x=(x_1,x')\in\R^{n}_{+},\quad \alpha !=\alpha_{1} !\cdots\alpha_{d}!
\]
and satisfies the boundary conditions
\begin{equation}\label{eq:boundpol}
B_{j}P\big|_{x_1=0}=\sum_{|\beta|= m_j}b_{j\beta}D^{\beta}P\Big|_{x_1=0}=0,
\end{equation}
where $j=1,\ldots,m$ and $0\le m_j\le 2m-1$. Since $P$ is of order $2m-2$, we only need to consider the boundary conditions whose order is $m_{j}\le 2m-2$.

Assume that the \ref{as:LScond}-condition is satisfied. Then, the boundary operators $B_1,\ldots,B_m$ are linearly independent, and so are their tangential derivatives $ D^{\gamma}_{x'}B_{j}$.
			
To determine the coefficients $c_{\alpha}$ of the polynomial, we proceed by induction on the value of $|\alpha|$.  For this, we introduce two subgroups of multi-indices:
\begin{align*}
\mathcal{I}_{|\alpha|}&:=\big\{\alpha\in\N_0^d :\ c_{\alpha}\ {\rm are\ determined\ using\ the\ boundary\ conditions}\big\}\\
\mathcal{J}_{|\alpha|}&:=\big\{\alpha\in\N_0^d :\ c_{\alpha}\ {\rm are\ determined\ using\ the\ condition}\ (D^{\alpha}P)_{Q_{1}^{+}}=(D^{\alpha}v)_{Q_1^+}\big\}.
\end{align*}
		
\emph{Step 1.} Let $|\alpha|=2m-2$ and $m_{j}\le 2m-2$. We will first determine the coefficients $c_\alpha$ and then prove the Poincar\'e type inequality
\begin{equation}\label{eq:Poinc2m-2}
\|D^{\alpha}(v-P)\|_{L_{p}(Q_1^+)}\le C\|D^{2m-1}v\|_{L_{p}(Q_1^+)}.
\end{equation}

For this, we take the $2m-2-m_{j}$-th tangential derivatives of each boundary condition in \eqref{eq:boundpol} and setting $x'=0$ we get a system of equations of the form	\begin{equation}\label{eq:lemmapoly1}
\sum_{|\beta|=m_{j}}b_{j\beta}c_{\beta+\gamma}=0,
\end{equation}
each $\gamma$ satisfying $|\gamma|=2m-2-m_{j}$, so that $|\beta+\gamma|=2m-2$, and $\gamma_{1}=0$.
			
We rewrite the above system as the product of the $r\times n$ matrix $\displaystyle \mathbb{B}=[b_{j\beta}^{i,\ell}]_{i=1,\ell=1}^{r,n}$ of the coefficients $b_{j\beta}$ by the vector $\textbf{C}=(c_{\alpha}^{\ell}:\ |\alpha|=2m-2)_{\ell=1}^{n}$ of the coefficients $c_\alpha$, where $n$ denotes the number of the unknown $c_\alpha$'s and $r$ the number of the equations in \eqref{eq:lemmapoly1}.
			
By the \ref{as:LScond}-condition, the $r$ rows of $\mathbb{B}$ are linearly independent. This implies that there exists an $r\times r$ submatrix $\mathbb{B}_{1}$ of $\mathbb{B}$ such that $rank(\mathbb{B}_1)=r$. Define $\mathbb{B}_2:=\mathbb{B}-\mathbb{B}_1$. Consider the vectors $\textbf{C} _{1}:=(c_\alpha^{i}:\ \alpha\in\mathcal{I}_{2m-2})_{i=1}^{r}$ and
$\textbf{C}_{2}:=(c_\alpha^{k}:\ \alpha\in\mathcal{J}_{2m-2})_{k=1}^{n-r}$.
We then rewrite the equation $\mathbb{B}\textbf{C} =0$ as $\mathbb{B}_1\textbf{C} _{1}=-\mathbb{B}_2\textbf{C} _{2}$, and we get
\[
\textbf{C} _{1}=-\mathbb{B}_1^{-1}\mathbb{B}_2\textbf{C} _{2}.
\]
From this we obtain that the coefficients $c_\alpha$ with $\alpha\in\mathcal{I}_{2m-2}$ depends on the coefficients $c_\alpha$ with $\alpha\in\mathcal{J}_{2m-2}$.
			
We determine the last ones by requiring
\begin{equation*}
(D^{\alpha}P)_{Q_{1}^{+}}=(D^{\alpha}v)_{Q_{1}^{+}},\quad \alpha\in \mathcal{J}_{2m-2}.
\end{equation*}
We then apply the interior Poincar\'e inequality as in \cite[Lemma 3.3]{DKBMO11} and we get
\begin{equation}\label{eq:Poinc2m-2p1}
\begin{aligned}
\|D^{\alpha}(v-P)\|_{L_{p}(Q_1^+)}&\le C_0\|D^{2m-1}(v-P)\|_{L_{p}(Q_1^+)}\\
&=C_0\|D^{2m-1}v\|_{L_{p}(Q_1^+)},
\end{aligned}
\end{equation}
with $\alpha\in\mathcal{J}_{2m-2}$ and $C_0=C_0(d,m,p)$.

Now let $\mathbf{D}^{\alpha}(v-P)$ be the vector of the derivatives $D^{\alpha}(v-P)$ for any multi-index $\alpha$, $\mathbf{B}(v-P)$ be the vector with components $B_j(v-P)$, and $\mathbf{D}^{\gamma}_{x'}\mathbf{B}(v-P)$ be the vector with components $D^{\gamma}_{x'}B_{j}(v-P)$. Observe that
\begin{equation}\label{eq:proofpoly1}
\mathbb{B} \mathbf{D}^{\alpha}(v-P) = \mathbf{D}^{\gamma}_{x'}\mathbf{B}(v-P),
\end{equation}
where $|\gamma|+m_{j}=|\alpha|=2m-2$.

Furthermore, let $\mathbf{D^{\alpha}_{\mathcal{I}}}(v-P)$ and $\displaystyle \mathbf{D^{\alpha}_{\mathcal{J}}}(v-P)$ denote the vectors with components $D^{\alpha}(v-P)$ with respectively $\alpha\in \mathcal{I}_{2m-2}$ and $\alpha\in\mathcal{J}_{2m-2}$. Observe that the order of their components depends respectively on the order of the components in the vectors $\mathbf{C}_1$ and $\mathbf{C}_2$ defined above.
Thus, for $\mathbb{B}_1$ and $\mathbb{B}_2$ introduced above, it holds that
\[
\mathbb{B} \mathbf{D}^{\alpha}(v-P)= \mathbb{B}_1 \mathbf{D^{\alpha}_{\mathcal{I}}}(v-P) + \mathbb{B}_2 \mathbf{D^{\alpha}_{\mathcal{J}}}(v-P).
\]

This, combined with \eqref{eq:proofpoly1}, implies that
\begin{equation}\label{eq:matrix1}
\mathbb{B}_1 \mathbf{D}_{\mathcal{I}}^{\alpha}(v-P)= \mathbf{D}^{\gamma}_{x'}\mathbf{B}(v-P) - \mathbb{B}_2 \mathbf{D^{\alpha}_{\mathcal{J}}}(v-P).
\end{equation}

Since $D^{\gamma}_{x'}B_j(v-P)=0$ on the boundary, we can apply the boundary Poincar\'e inequality and  we get
\begin{equation}\label{eq:Poinc2m-2p2}
\|D^{\gamma}_{x'}B_j(v-P)\|_{L_{p}(Q_1^+)}\le C_1\|D^{2m-1}v\|_{L_{p}(Q_1^+)},\quad  C_1=C_1(d,m,p,K).
\end{equation}
By \eqref{eq:matrix1} and combining \eqref{eq:Poinc2m-2p1} and \eqref{eq:Poinc2m-2p2}, we get
\[
\|D^{\alpha}(v-P)\|_{L_p(Q_1^+)}\le (\det(\mathbb{B}_{1}))^{-1} C_2\|D^{2m-1}v\|_{L_p(Q_1^+)},\quad \alpha\in\mathcal{I}_{2m-2},
\]
where $C_2=C_2(d,m,p,K)$. Since $\mathbb{B}_{1}$ has dimension $r\times r$ and $rank(\mathbb{B}_{1})=r$, $\det(\mathbb{B}_{1})\neq 0$. Thus, there exists $\delta>0$ small enough and depending on $b_{j\beta}$, such that $\det(\mathbb{B}_{1})>\delta$. Therefore, we obtain \eqref{eq:Poinc2m-2}, i.e.,
\[
\|D^{\alpha}(v-P)\|_{L_{p}(Q_1^+)}\le C\|D^{2m-1}v\|_{L_{p}(Q_1^+)},\quad |\alpha|=2m-2,
\]
with C depending only on $d,m,p,K$ and $b_{j\beta}$.
			
\medskip
			
\emph{Step 2.} Let $|\alpha|=2m-3$ and $m_{j}\le 2m-3$.  By taking the $(2m-3-m_{j})$-th tangential derivatives of each boundary condition in \eqref{eq:boundpol} and setting $x'=0$ we get a system of equation of the form
\[
\sum_{|\beta|=m_{j}}b_{j\beta}c_{\beta+\gamma}=0,
\]
each $\gamma$ satisfying $|\gamma|=2m-3-m_{j}$, so that $|\beta+\gamma|=2m-3$,  and $\gamma_{1}=0$.
As before, we determine the coefficients $c_\alpha$ with $\alpha\in\mathcal{I}_{2m-3}$ in terms of the coefficients $c_\alpha$ with $\alpha\in\mathcal{J}_{2m-3}$.
The last one are determined as in the previous step by requiring
\[
(D^{\alpha}P)_{Q_{1}^{+}}=(D^{\alpha}v)_{Q_{1}^{+}},\quad \alpha\in \mathcal{J}_{2m-3}.
\]
Observe that in the average condition there are coefficients $c_\alpha$ with $|\alpha|=2m-2$, but they have been already determined in {\em Step 1}. From this, proceeding as in {\em Step 1} and applying the PoincarPoincar\'e type inequality \eqref{eq:Poinc2m-2} we get
\[
\|D^{\alpha}(v-P)\|_{L_{p}(Q_{1}^{+})}\le C\|D^{2m-2}(v-P)\|_{L_{p}(Q_{1}^{+})}\le C\|D^{2m-1}v\|_{L_{p}(Q_{1}^{+})},
\]
with $|\alpha|=2m-3$ and $C$ depending only on $d,m,p,K$ and $b_{j\beta}$.
			
\medskip
		
\emph{Step k.} Let $|\alpha|=2m-1-k$ and $m_{j}\le 2m-1-k$. We proceed by induction.
			
By taking the $(2m-1-k-m_{j})$-th tangential derivatives of each boundary condition in \eqref{eq:boundpol} and setting $x'=0$ we get a system of equation of the form
\[
\sum_{|\beta|=m_j}b_{j\beta}c_{\beta+\gamma}=0,
\]
each $\gamma$ satisfying $|\gamma|=2m-1-k-m_{j}$, so that $|\beta+\gamma|=2m-1-k$,  and $\gamma_{1}=0$.
Proceeding as before, we determine the coefficients $c_\alpha$, $\alpha\in\mathcal{I}_{2m-1-k}$ in terms of the coefficients $c_{\alpha}$, $\alpha\in\mathcal{J}_{2m-1-k}$. The last ones are determined by requiring
\[
(D^{\alpha}P)_{Q_1^+}=(D^{\alpha}v)_{Q_1^+},\quad \alpha\in \mathcal{J}_{2m-1-k}.
\]
Observe that by induction we have determined the coefficients $c_{\alpha}$, $|\alpha|\in\{2m-2,\ldots,2m-k\}$.
Therefore, proceeding as in {\em Step 1}, using induction for $|\alpha|\in\{2m-2,\ldots,2m-k\}$ and applying the Poincar\'e type inequalities obtained at any induction step, we get
\[
\|D^{\alpha}(v-P)\|_{L_{p}(Q_{1}^{+})}\le C\|D^{2m-k}(v-P)\|_{L_{p}(Q_{1}^{+})}
\le\cdots\le C\|D^{2m-1}v\|_{L_{p}(Q_{1}^{+})},
\]
with $|\alpha|=2m-1-k$ and $C$ depending only on $d,m,p,K$ and $b_{j\beta}$.
		
\medskip
		
\emph{Step 2m-1.} Let $|\alpha|=0$. If $P(x)|_{x_1=0}=0$ is a boundary condition, then $c_0=0$. Otherwise, we determine $c_0$ by using the average condition $(P)_{Q_1^+}=(v)_{Q_1^+}$.

This concludes the construction of the required polynomial $P$.
Moreover, by induction we get \eqref{eq:poincLemmaPoly}.

To conclude the proof, observe that the polynomial $P$ satisfies the boundary conditions. In fact, by the construction above, at each step one can show by induction that the tangential derivatives of the boundary conditions are equal to zero.
Since the boundary conditions are satisfied at the origin $x'=0$, they must then be satisfied for any $x'\in\R^{d-1}$. The assertion follows.
\end{proof}
		
\section{$L_{p}(L_q)$-estimates for systems with general boundary condition}\label{sec:proofmainresult}
	
We are now ready to prove Theorem \ref{thm:VMOproblemLS}. For this, we will follow the procedure of \cite[Theorem 5.4]{DK16} and we will need two intermediate results. The first one follows from Lemma \ref{lemma:moe2}.
		
\begin{lemma}\label{lemma:normspq}
Let $p,q\in(1,\infty)$, $v\in A_{p}(\R)$, $w\in A_{q}(\R^d_+)$, $\lambda\geq 0$ and $t_1\in\R$. Assume that $A$ and $B_{j}, j=1,\ldots,m$, satisfy conditions \ref{as:operatorALS}, \ref{as:operatorBLS}, and \ref{as:LScond} for some $\theta\in(0,\pi/2)$, and assume the lower-order coefficients of $A$ and $B_j$ to be all zero. Then, there exists constants $R_1,\rho\in (0,1)$, depending only on $\theta$, $m$, $d$, $K$, $p$, $q$, $[v]_p$, $[w]_q$, and $b_{j\beta}$, such that for $u\in W^{1,2m}_{p,q,v,w}(\R^{d+1}_{+})$ vanishing outside $(t_1-(R_0 R_1)^{2m},t_1)\times \R^{d}_{+}$ and satisfying \eqref{prob:VMOtimespaceLS} in $\R^{d+1}_{+}$, where $f\in L_{p,q,v,w}(\R^{d+1}_{+})$, it holds that
\begin{equation}\label{eq:lemmanormspq}
\sum_{|\alpha|\le 2m}\lambda^{1-\frac{|\alpha|}{2m}}\|D^{\alpha}u\|_{L_{p,q,v,w}(\R^{d+1}_{+})}\le C\|f\|_{L_{p,q,v,w}(\R^{d+1}_{+})},
\end{equation}
where $C=C(\theta,d,m,K,p,q,[v]_{p},[w]_{q}, b_{j\beta})>0$.
\end{lemma}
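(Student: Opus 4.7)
The plan is to combine the mean oscillation estimate of Lemma \ref{lemma:moe2} with the sharp function (Theorem \ref{thm:ffst}) and weighted maximal function (Theorem \ref{thm:HL}) inequalities, following the Krylov--Dong--Kim framework of \cite[Theorem 5.4]{DK16}. The support hypothesis on $u$ will be used precisely to handle the supremum in the sharp function over \emph{large} parabolic cylinders, where Lemma \ref{lemma:moe2} does not apply.

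Using the self-improving (openness) property of Muckenhoupt weights, we fix once and for all $p_0\in(1,\min(p,q))$ and $\mu\in(1,\infty)$, both sufficiently close to $1$, so that $p_0\mu<\min(p,q)$, $v\in A_{p/(p_0\mu)}(\R)$, and $w\in A_{q/(p_0\mu)}(\R^d_+)$. A direct mixed-norm computation combined with Theorem \ref{thm:HL} applied in $L_{p/p_0,q/p_0,v,w}$ (respectively $L_{p/(p_0\mu),q/(p_0\mu),v,w}$) yields
\[
\|(\mathcal{M}|g|^{p_0})^{1/p_0}\|_{L_{p,q,v,w}(\R^{d+1}_+)}+\|(\mathcal{M}|g|^{p_0\mu})^{1/(p_0\mu)}\|_{L_{p,q,v,w}(\R^{d+1}_+)}\le C\|g\|_{L_{p,q,v,w}(\R^{d+1}_+)}
\]
for any $g\in L_{p,q,v,w}(\R^{d+1}_+)$.

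Next, we establish a pointwise bound on the sharp functions of $u_t$ and of $D^\alpha u$ with $\alpha_1<2m$. Fix $\kappa\ge 16$ and $R_1\in(0,1)$ (to be chosen below), and let $X_0\in\overline{\R^{d+1}_+}$. For any parabolic cylinder $Q_r^+(X')\ni X_0$ with $r\le R_0/\kappa$, Lemma \ref{lemma:moe2} (applied with exponent $p_0$ and parameter $\mu$) controls $\osc(u_t,Q_r^+(X'))+\sum_{\alpha_1<2m}\lambda^{1-|\alpha|/(2m)}\osc(D^\alpha u,Q_r^+(X'))$ by
\[
C\kappa^{-\nu_0}\sum_{|\alpha|\le 2m}\lambda^{1-\frac{|\alpha|}{2m}}\mathcal{M}(|D^\alpha u|^{p_0})^{\frac{1}{p_0}}(X_0)+C\kappa^{N_0}\mathcal{M}(|f|^{p_0})^{\frac{1}{p_0}}(X_0)+C\kappa^{N_0}\rho^{\frac{1}{p_0\varsigma}}\mathcal{M}(|D^{2m}u|^{p_0\mu})^{\frac{1}{p_0\mu}}(X_0),
\]
where $\nu_0=1-1/p_0$ and $N_0=(d+2m)/p_0$. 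For $r>R_0/\kappa$, the support hypothesis yields $|\{u\ne 0\}\cap Q_r^+(X')|\le (R_0R_1)^{2m}|B_r^+(x')|$, so H\"older's inequality gives
\[
\osc(D^\alpha u,Q_r^+(X'))\le 2\dashint_{Q_r^+(X')}|D^\alpha u|\le C(R_1\kappa)^{2m/p_0'}\mathcal{M}(|D^\alpha u|^{p_0})^{1/p_0}(X_0),
\]
and similarly for $u_t$. Choosing $R_1=R_1(\kappa)$ so that $(R_1\kappa)^{2m/p_0'}\le\kappa^{-\nu_0}$ merges the two regimes into a single pointwise estimate on the sharp functions.

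Applying Theorem \ref{thm:ffst} to each of $u_t$ and $D^\alpha u$ with $\alpha_1<2m$, and combining with the maximal estimates above, yields
\begin{align*}
\|u_t\|_{L_{p,q,v,w}}+\sum_{\substack{|\alpha|\le 2m\\ \alpha_1<2m}}\lambda^{1-\frac{|\alpha|}{2m}}\|D^\alpha u\|_{L_{p,q,v,w}}
&\le C\kappa^{-\nu_0}\sum_{|\alpha|\le 2m}\lambda^{1-\frac{|\alpha|}{2m}}\|D^\alpha u\|_{L_{p,q,v,w}}\\
&\quad+C\kappa^{N_0}\|f\|_{L_{p,q,v,w}}+C\kappa^{N_0}\rho^{\frac{1}{p_0\varsigma}}\|D^{2m}u\|_{L_{p,q,v,w}}.
\end{align*}
The missing term $\|D_{x_1}^{2m}u\|_{L_{p,q,v,w}}$ is recovered by isolating $a_{(2m,0,\ldots,0)}D_{x_1}^{2m}u$ from the equation and using ellipticity (condition (E)$_\theta$ applied at $\xi=e_1$ together with the $L^\infty$ bound on the coefficients gives $|a_{(2m,0,\ldots,0)}|\ge c>0$), thereby bounding it by $C(\|f\|+\|u_t\|+\sum_{\alpha\ne(2m,0,\ldots,0)}\|D^\alpha u\|+\lambda\|u\|)$ in $L_{p,q,v,w}$. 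We first choose $\kappa$ large enough that $C\kappa^{-\nu_0}\le 1/4$, then $\rho\in(0,1)$ small enough (depending on $\kappa$) that $C\kappa^{N_0}\rho^{1/(p_0\varsigma)}\le 1/4$; absorbing the $D^\alpha u$ terms into the left-hand side then yields \eqref{eq:lemmanormspq}. The principal difficulty is the delicate cascade of parameter choices---$p_0$ and $\mu$ are dictated by the $A_p$-constants of $v,w$, $\kappa$ depends on $p_0$, $R_1$ must be chosen small relative to $\kappa$, and finally $\rho$ is taken small depending on $\kappa$ and $p_0$---and the fact that the support hypothesis on $u$ is what makes the large-cylinder sharp-function supremum tractable at all.
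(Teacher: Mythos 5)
Your proposal is correct and follows essentially the same route as the paper: the same reduction to intermediate exponents $p_0,\mu$ via the self-improving property of $A_p$-weights, the same small/large cylinder dichotomy (using Lemma~\ref{lemma:moe2} for $r\le R_0/\kappa$ and the support hypothesis for $r>R_0/\kappa$), the same passage through Theorems~\ref{thm:HL} and~\ref{thm:ffst}, recovery of $D_{x_1}^{2m}u$ from the equation, and the same terminal cascade $\kappa$ large, then $R_1,\rho$ small. The only cosmetic difference is that you merge the two regimes into a single $\kappa^{-\nu_0}$ bound by taking $R_1$ small first, whereas the paper keeps the two contributions separate and absorbs both at the end; the content is identical.
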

		
\begin{proof}
For the given $v\in A_{p}(\R)$ and $w\in A_q(\R^{d}_{+})$, using reverse H\"{o}lder's inequality (see \cite[Corollary 9.2.4 and Remark 9.2.3]{GrafakosModern}) we find $\sigma_1=\sigma_1(p,[v]_{p})$, $\sigma_2=\sigma_2(q,[w]_{q})$ such that $p-\sigma_1>1$, $q-\sigma_2>1$ and
\[
v\in A_{p-\sigma_1}(\R),\quad  w\in A_{q-\sigma_2}(\R^{d}_{+}).
\]
Take $p_0,\mu\in (1,\infty)$ satisfying $\displaystyle p_0 \mu=\min\Big\{\frac{p}{p-\sigma_1},\frac{q}{q-\sigma_2}\Big\}>1$.
Note that
\[
v\in A_{p-\sigma_1}\subset A_{p/(p_0 \mu)}\subset A_{p/p_0}(\R),
\]
\[
w\in A_{q-\sigma_2}\subset A_{q/(p_0 \mu)}\subset A_{q/p_0}(\R^{d}_{+}).
\]
Then it holds that
\[
u\in W^{1,2m}_{p_0\mu,{\rm loc}}(\R^{d+1}_{+}),\quad  f\in L_{p_0\mu,{\rm loc}}(\R^{d+1}_{+}).
\]
Indeed, by \cite[Lemma 3.1]{DK16}, for any $g\in L_{p_0\mu,loc}$ and for any half-ball $B_1^{+}\subset\R^{d}_{+}$ and interval $B_2\subset\R$,
\begin{equation*}
\begin{aligned}
&\frac{1}{|B_1^+||B_2|}\int_{B_1^+\times B_2}|g|^{p_{0}\mu}\,dx\,dt=\frac{1}{|B_{2}|}\int_{B_2} \frac{1}{|B_1^+|}\int_{B_1^+} |g|^{p_{0}\mu}\,dx\,dt\\
&\le \frac{1}{|B_{2}|} \int_{B_2}\bigg(\frac{[w]_{q/(p_0 \mu)}}{w(B_1^+)}\int_{B_1^+}|g|^{q}w(x)\,dx\bigg)^{\frac{p_0 \mu}{q}}\,dt\\
&\le \bigg( \frac{[v]_{p/(p_0 \mu)}}{v(B_2)}\int_{B_2}\bigg(\frac{[w]_{q/(p_0 \mu)}}{w(B_1^+)}\int_{B_1^+}|g|^{q}w(x)\,dx\bigg)^{\frac{p}{q}}v(t)\,dt\bigg)^{\frac{p_0 \mu}{p}}.
\end{aligned}
\end{equation*}
Let $\kappa\geq 16$ be a large constant to be specified.
If $r>\frac{R_0}{\kappa}$, since $u$ vanishes outside $(t_1-(R_0 R_1)^{2m},t_1)\times \R^{d}_{+}$, for $0\le|\alpha|\le 2m$, we have
\begin{equation}\label{eq:lemmanormspq1}
\begin{aligned}
&(|D^{\alpha}u-(D^{\alpha}u)_{Q^{+}_{r}(X_0)}|)_{Q^{+}_{r}(X_0)}\le 2( |D^{\alpha}u|)_{Q^{+}_{r}(X_0)}\\
&\le 2 (I_{(t_1-(R_0 R_1)^{2m},t_1)}(s))_{Q^{+}_{r}(X_0)}^{1-\frac{1}{p_0}}(|D^{\alpha}u|^{p_0})_{Q^{+}_{r}(X_0)}^{\frac{1}{p_0}}\\
&\le C_{d,m,p_{0}}\kappa^{2m(1-\frac{1}{p_0})}R_{1}^{2m(1-\frac{1}{p_0})}(|D^{\alpha}u|^{p_0})_{Q^{+}_{r}(X_0)}^{\frac{1}{p_0}},
\end{aligned}
\end{equation}
where $I$ denotes the indicator function.

If $r\in(0,R_0/\kappa]$, then by Lemma \ref{lemma:moe2} with $p=p_0$, there exists a constant \linebreak $C=C(\theta,d,m,\mu,K,p_0,b_{j\beta})$ such that, for $\frac{1}{\mu}+\frac{1}{\varsigma}=1$,
\begin{equation}\label{eq:lemmanormspq2}
\begin{aligned}
&(|u_{t}-(u_{t})_{Q^{+}_{r}(X_0)}|)_{Q^{+}_{r}(X_0)}
+\sum_{\substack{|\alpha|\le 2m, \alpha_1<2m}}\lambda^{1-\frac{|\alpha|}{2m}}(|D^{\alpha}u-(D^{\alpha}u)_{Q^{+}_{r}(X_0)}|)_{Q^{+}_{r}(X_0)}\\
&\le C\kappa^{-(1-\frac{1}{p_0})}\sum_{|\alpha|\le 2m}\lambda^{1-\frac{|\alpha|}{2m}}(|D^{\alpha}u|^{p_0})^{\frac{1}{p_0}}_{Q^{+}_{\kappa r}(X_0)}+C\kappa^{\frac{d+2m}{p_0}}(|f|^{p_0})^{\frac{1}{p_0}}_{Q^{+}_{\kappa r}(X_0)}\\
&\quad +C\kappa^{\frac{d+2m}{p_0}}\rho^{\frac{1}{p_0 \varsigma}}(|D^{2m}u|^{p_0\mu})^{\frac{1}{p_0\mu}}_{Q^{+}_{\kappa r}(X_0)}.
\end{aligned}
\end{equation}
Combining \eqref{eq:lemmanormspq1} and \eqref{eq:lemmanormspq2} we get
\begin{equation*}
\begin{aligned}
&(|u_{t}-(u_{t})_{Q^{+}_{r}(X_0)}|)_{Q^{+}_{r}(X_0)}+\sum_{\substack{|\alpha|\le 2m, \alpha_1<2m}}\lambda^{1-\frac{|\alpha|}{2m}}(|D^{\alpha}u-(D^{\alpha}u)_{Q^{+}_{r}(X_0)}|)_{Q^{+}_{r}(X_0)}\\
&\le C(\kappa^{2m(1-\frac{1}{p_0})}R_{1}^{2m(1-\frac{1}{p_0})}+\kappa^{-(1-\frac{1}{p_0})})\sum_{|\alpha|\le 2m}\lambda^{1-\frac{|\alpha|}{2m}}(|D^{\alpha}u|^{p_0})^{\frac{1}{p_0}}_{Q^{+}_{\kappa r}(X_0)}\\
&\quad +C\kappa^{\frac{d+2m}{p_0}}(|f|^{p_0})^{\frac{1}{p_0}}+C\kappa^{\frac{d+2m}{p_0}}\rho^{\frac{1}{p_0 \varsigma}}(|D^{2m}u|^{p_0\mu})^{\frac{1}{p_0\mu}}_{Q^{+}_{\kappa r}(X_0)}.
\end{aligned}
\end{equation*}
Observe that
\begin{multline*}
(u_{t})^{\sharp}(t,x)+\sum_{|\alpha|\le 2m, \alpha_1<2m}\lambda^{1-\frac{|\alpha|}{2m}}(D^{\alpha}u)^{\sharp}(t,x)
\le \sup(|u_{t}-(u_{t})_{Q^{+}_{r}(X_0)}|)_{Q^{+}_{r}(X_0)}\\
+\sup\sum_{\substack{|\alpha|\le 2m, \alpha_1<2m}}\lambda^{1-\frac{|\alpha|}{2m}}(|D^{\alpha}u-(D^{\alpha}u)_{Q^{+}_{r}(X_0)}|)_{Q^{+}_{r}(X_0)},
\end{multline*}
where the supremum is taken over all the $Q_r^+(X_0)$ with $(t,x)\in Q_{r}^{+}(X_0)$. This implies
\begin{equation}\label{eq:ineqbeforenorms}
\begin{aligned}
&(u_{t})^{\sharp}(t,x)+\sum_{\substack{|\alpha|\le 2m, \alpha_1<2m}}\lambda^{1-\frac{|\alpha|}{2m}}(D^{\alpha}u)^{\sharp}(t,x)\\
&\le C(\kappa^{2m(1-\frac{1}{p_0})}R_{1}^{2m(1-\frac{1}{p_0})}+\kappa^{-(1-\frac{1}{p_0})})\sum_{|\alpha|\le 2m}\lambda^{1-\frac{|\alpha|}{2m}}[\mathcal{M}(|D^{\alpha}u|^{p_0})(t,x)]^{\frac{1}{p_0}}\\
&\quad +C\kappa^{\frac{d+2m}{p_0}}[\mathcal{M}(|f|^{p_0})(t,x)]^{\frac{1}{p_0}}+C\kappa^{\frac{d+2m}{p_0}}\rho^{\frac{1}{p_0 \varsigma}}[\mathcal{M}(|D^{2m}u|^{p_0\mu})(t,x)]^{\frac{1}{p_0\mu}}.
\end{aligned}
\end{equation}
By taking the $L_{p,q,v,w}(\R^{d+1}_{+})$-norms on both sides of \eqref{eq:ineqbeforenorms} and applying Theorems \ref{thm:HL} and \ref{thm:ffst}, we get for $C=C(\theta,d,m,K,p,q,[v]_{p},[w]_{q},b_{j\beta})$,
\begin{equation}\label{eq:norms1}
\begin{aligned}
&\|u_{t}\|_{L_{p,q,v,w}(\R^{d+1}_{+})}+\sum_{\substack{|\alpha|\le 2m, \alpha_1<2m}}\lambda^{1-\frac{|\alpha|}{2m}}\|D^{\alpha}u\|_{L_{p,q,v,w}(\R^{d+1}_{+})}\\
&\le C\kappa^{\frac{d+2m}{p_0}}\|f\|_{L_{p,q,v,w}(\R^{d+1}_{+})}
+C\kappa^{\frac{d+2m}{p_0}}\rho^{\frac{1}{p_{0}\varsigma}}\|D^{2m}u\|_{L_{p,q,v,w}(\R^{d+1}_{+})}\\
&\quad +C(\kappa^{2m(1-\frac{1}{p_0})}R_{1}^{2m(1-\frac{1}{p_0})}+\kappa^{-(1-\frac{1}{p_0})})\sum_{|\alpha|\le 2m}\lambda^{1-\frac{|\alpha|}{2m}}\|D^{\alpha}u\|_{L_{p,q,v,w}(\R^{d+1}_{+})},
\end{aligned}
\end{equation}
where we used
\begin{equation*}
\begin{aligned}
&\|[\mathcal{M}(D^{2m}u)^{p_0\mu}]^{\frac{1}{p_0\mu}}\|_{L_{p,q,v,w}(\R^{d+1}_{+})}
=\|\mathcal{M}(D^{2m}u)^{p_0\mu}\|^{\frac{1}{p_0\mu}}_{L_{p/(p_0\mu),q/(p_0\mu),v,w}(\R^{d+1}_{+})}\\
&\le C\|(D^{2m}u)^{p_0\mu}\|^{\frac{1}{p_0\mu}}_{L_{p/(p_0\mu),q/(p_0\mu),v,w}(\R^{d+1}_{+})}
=C\|D^{2m}u\|_{L_{p,q,v,w}(\R^{d+1}_{+})},
\end{aligned}
\end{equation*}
with $C=C(d,p/(p_0 \mu),q/(p_0\mu),[v]_{p},[w]_{q})$.
		
It follows from the equation that
\[
a_{\tilde{\alpha}\tilde{\alpha}}(t,x)D^{2m}_{x_1}u=f-u_{t}-\sum_{|\alpha|= 2m,\alpha_{1}<2m}a_{\alpha}(t,x)D^{\alpha}u-\lambda u,
\]
where $\tilde{\alpha}=(m,0,\ldots,0)$.
Thus, by taking the $L_{p,q,v,w}$-norms and by the assumptions on the coefficients, it holds that for $C=C(\theta,d,m,K,p,q,[v]_{p},[w]_{q})$,
\begin{multline}\label{eq:norms2}
\|D^{2m}_{x_1}u\|_{L_{p,q,v,w}(\R^{d+1}_{+})}\le C\|f\|_{L_{p,q,v,w}(\R^{d+1}_{+})}+ C\|u_{t}\|_{L_{p,q,v,w}(\R^{d+1}_{+})} \\ +C\sum_{\substack{|\alpha|\le 2m, \alpha_1<2m}}\lambda^{1-\frac{|\alpha|}{2m}}\|D^{\alpha}u\|_{L_{p,q,v,w}(\R^{d+1}_{+})}.
\end{multline}
Combining \eqref{eq:norms1} and \eqref{eq:norms2}, we get
\begin{equation*}
\begin{aligned}
&\sum_{|\alpha|\le 2m}\lambda^{1-\frac{|\alpha|}{2m}}\|D^{\alpha}u\|_{L_{p,q,v,w}(\R^{d+1}_{+})}\\
&\le C\kappa^{\frac{d+2m}{p_0}}\|f\|_{L_{p,q,v,w}(\R^{d+1}_{+})}+C\kappa^{\frac{d+2m}{p_0}}\rho^{\frac{1}{p_{0}\varsigma}}\|D^{2m}u\|_{L_{p,q,v,w}(\R^{d+1}_{+})}\\
&\quad+ C(\kappa^{2m(1-\frac{1}{p_0})}R_{1}^{2m(1-\frac{1}{p_0})}+\kappa^{-(1-\frac{1}{p_0})})\sum_{|\alpha|\le 2m,\ \alpha_1\le 2m}\lambda^{1-\frac{|\alpha|}{2m}}\|D^{\alpha}u\|_{L_{p,q,v,w}(\R^{d+1}_{+})}.
\end{aligned}
\end{equation*}
Finally by first taking $\kappa\geq 16$ sufficiently large and then $\rho$ and $R_1$ sufficiently small such that
\[
C\kappa^{-(1-\frac{1}{p_{0}})}\le \frac{1}{6},\quad  C\kappa^{2m(1-\frac{1}{p_{0}})}R_{1}^{2m(1-\frac{1}{p_{0}})}\le\frac{1}{6},\quad
{\rm and}\quad C\kappa^{\frac{d+2m}{p_{0}}}\rho^{\frac{1}{p_{0}\varsigma}}\le \frac{1}{6},
\]
we get \eqref{eq:lemmanormspq}. The lemma is proved.
\end{proof}
		
From Lemma \ref{lemma:normspq} and using a partition of unity argument with respect to only the time variable,  we can prove the second intermediate result.
		
\begin{proposition}\label{prop:interpolation}
Assume that $A$ and $B_j$, $j=1,\ldots,m$, satisfy conditions \ref{as:operatorALS}, \ref{as:operatorBLS}, and \ref{as:LScond} for some $\theta\in(0,\pi/2)$, and assume the lower-order terms of $B_j$ to be all zero. Then there exists
$\rho=\rho(\theta,m,d,K,p,q,[v]_p,[w]_q,b_{j\beta})\in(0,1)$ such that for $\lambda\geq 0$, $f\in L_{p,q,v,w}(\overline{\R^{d+1}_{+}})$ and $u\in W^{1,2m}_{p,q,v,w}(\overline{\R^{d+1}_{+}})$ satisfying \eqref{prob:VMOtimespaceLS}, we have
\begin{align}\label{eq:propinterpolation}
&\sum_{|\alpha|\le 2m}\lambda^{1-\frac{|\alpha|}{2m}}\|D^{\alpha}u\|_{L_{p,q,v,w}(\R^{d+1}_{+})}\nonumber\\
&\le C_{1}\|f\|_{L_{p,q,v,w}(\R^{d+1}_{+})}
+C_2\sum_{|\alpha|\le 2m-1}\|D^{\alpha}u\|_{L_{p,q,v,w}(\R^{d+1}_{+})},
\end{align}
where
\begin{align*}
C_1&=C_1(\theta,d,m,K,p,q,[v]_{p},[w]_{q},b_{j\beta}),\\
C_2&=C_2(\theta,d,m,K,p,q,[v]_{p},[w]_{q},R_0,b_{j\beta}).
\end{align*}
\end{proposition}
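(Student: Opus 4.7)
The plan is to localize in time to reduce to the hypotheses of Lemma \ref{lemma:normspq}. Decompose the elliptic operator as $A = A_0 + L$, where
$A_0 = \sum_{|\alpha|=2m} a_\alpha D^\alpha$ is the principal part (whose coefficients satisfy the assumptions of Lemma \ref{lemma:normspq} once lower-order terms are dropped), and $L = \sum_{|\alpha|<2m} a_\alpha D^\alpha$ collects the lower-order contributions with coefficients bounded by $K$. The crucial structural observation is that, since the lower-order terms of $B_j$ are assumed to vanish and the top-order coefficients of $B_j$ are constants, each $B_j$ is a constant-coefficient purely spatial differential operator. Consequently $B_j(\zeta u) = \zeta\, B_j u$ for any cutoff $\zeta = \zeta(t)$ depending only on time, so multiplication by a temporal cutoff preserves the homogeneous boundary condition.

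Let $R_1,\rho$ be the constants produced by Lemma \ref{lemma:normspq}. I would choose a smooth partition of unity $\{\zeta_k\}_{k\in\Z}$ in the time variable with $\sum_k \zeta_k \equiv 1$, $\supp\zeta_k \subset (t_k-(R_0R_1)^{2m},t_k)$, finite overlap bounded by some $N$, and $|\zeta_k'| \le C(R_0R_1)^{-2m}$. Setting $u_k := \zeta_k u$, the function $u_k$ lies in $W^{1,2m}_{p,q,v,w}(\R^{d+1}_+)$, vanishes outside the required time window, and satisfies
\begin{equation*}
\begin{dcases}
(u_k)_t + (A_0+\lambda) u_k = \zeta_k f - \zeta_k L u + \zeta_k' u & \text{in } \R^{d+1}_+,\\
B_j u_k\big|_{x_1=0} = 0 & j=1,\ldots,m.
\end{dcases}
\end{equation*}
Thus Lemma \ref{lemma:normspq} applies to $u_k$ with principal-part operator $A_0$ and right-hand side $F_k := \zeta_k f - \zeta_k Lu + \zeta_k' u$, giving
\begin{equation*}
\sum_{|\alpha|\le 2m}\lambda^{1-|\alpha|/2m}\|D^\alpha u_k\|_{L_{p,q,v,w}(\R^{d+1}_+)} \le C\, \|F_k\|_{L_{p,q,v,w}(\R^{d+1}_+)}.
\end{equation*}

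The next step is to raise to the $p$-th power and sum over $k$. Since $0 \le \zeta_k \le 1$ with at most $N$ indices active at any $t$, one has $\sum_k |\zeta_k(t)|^p \le N$, which gives $\sum_k \|\zeta_k g\|^p_{L_{p,q,v,w}} \le N \|g\|^p_{L_{p,q,v,w}}$ for $g\in\{f, Lu\}$, and $\sum_k \|\zeta_k' u\|^p_{L_{p,q,v,w}} \le CN (R_0R_1)^{-2mp} \|u\|^p_{L_{p,q,v,w}}$. On the left-hand side, the identity $D^\alpha u = \sum_k D^\alpha u_k$ holds pointwise for every $\alpha$ (spatial derivatives commute with $\zeta_k$, and for $\alpha=0$ this is just $\sum \zeta_k \equiv 1$), so the standard finite-overlap inequality $|\sum_k a_k|^p \le N^{p-1}\sum_k |a_k|^p$ yields $\|D^\alpha u\|^p_{L_{p,q,v,w}} \le N^{p-1}\sum_k \|D^\alpha u_k\|^p_{L_{p,q,v,w}}$.

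Combining these bounds, extracting $p$-th roots via $(\sum_j b_j^p)^{1/p} \le \sum_j b_j$ for $b_j\ge 0$, and using $\|Lu\|_{L_{p,q,v,w}} \le CK\sum_{|\alpha|\le 2m-1}\|D^\alpha u\|_{L_{p,q,v,w}}$ from assumption \ref{as:operatorALS}, produces the desired inequality \eqref{eq:propinterpolation}, with $C_1$ independent of $R_0$ and $C_2$ depending on $R_0$ through the factor $(R_0R_1)^{-2m}$ coming from $\zeta_k'$. The only structural subtlety is verifying that $B_j u_k\equiv 0$, which is exactly where the hypothesis that $B_j$ has no lower-order terms is used; the remainder is routine partition-of-unity bookkeeping on the weighted mixed-norm spaces.
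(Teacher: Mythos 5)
Your argument is correct and follows essentially the same route as the paper: reduce to the principal part of $A$ by moving the lower-order terms to the right-hand side, exploit the fact that a purely temporal cutoff commutes with the constant-coefficient spatial operators $B_j$ to preserve the homogeneous boundary conditions, localize in time to apply Lemma~\ref{lemma:normspq}, and recombine via a Fubini-in-time argument. The only cosmetic difference is that the paper uses a continuous averaging over translates of a single profile $\zeta$ normalized so that $\int_\R \zeta^p\,dt = 1$ (giving exact identities in place of your finite-overlap inequalities), whereas you use a discrete partition of unity with bounded overlap; both implementations are valid.
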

\begin{proof}
Without loss of generality, we can assume the lower-order coefficients of $A$ to be zero.
To see this, just move the terms $\sum_{|\alpha|<2m}a_{\alpha}(t,x)D^{\alpha}$ to the right-hand side of \eqref{prob:VMOtimespaceLS}, i.e., consider
\[
u_{t}+\sum_{|\alpha|=2m}a_{\alpha}(t,x)D^{\alpha}u=f-\sum_{|\alpha|\le 2m-1}a_{\alpha}(t,x)D^{\alpha}u
\]
and recall that the lower-order coefficients of $A$ are bounded by $K$, so that
\[
\sum_{|\alpha|\le 2m-1}\|a_{\alpha}D^{\alpha}u\|_{L_{p,q,v,w}(\R^{d+1}_{+})}\le C_{K}\sum_{|\alpha|\le 2m-1}\|D^{\alpha}u\|_{L_{p,q,v,w}(\R^{d+1}_{+})}.
\]
If \eqref{eq:propinterpolation} holds for $A=\sum_{|\alpha|=2m}a_{\alpha}(t,x)D^{\alpha}$, we thus get
\begin{equation*}
\begin{aligned}
&\sum_{|\alpha|\le 2m}\lambda^{1-\frac{|\alpha|}{2m}}\|D^{\alpha}u\|_{L_{p,q,v,w}(\R^{d+1}_{+})}\\
&\le C_{1}\|f\|_{L_{p,q,v,w}(\R^{d+1}_{+})} + C_{1}C_{K}\sum_{|\alpha|\le 2m-1}\|D^{\alpha}u\|_{L_{p,q,v,w}(\R^{d+1}_{+})}\\
&\quad +C_2\sum_{|\alpha|\le 2m-1}\|D^{\alpha}u\|_{L_{p,q,v,w}(\R^{d+1}_{+})}\\
&\le C_{1}\|f\|_{L_{p,q,v,w}(\R^{d+1}_{+})}+C_2\sum_{|\alpha|\le 2m-1}\|D^{\alpha}u\|_{L_{p,q,v,w}(\R^{d+1}_{+})}.
\end{aligned}
\end{equation*}
Take now $R_1\in(0,1)$ from Lemma \ref{lemma:normspq} and fix a non-negative infinitely differentiable function $\zeta(t)$ defined on $\R$ such that $\zeta(t)$ vanishes outside $(-(R_0R_1)^{2m},0)$ and
\[
\int_{\R}\zeta(t)^{p}\,dt=1.
\]
Then, $u(t,x)\zeta(t-s)$ satisfies
\begin{equation}\label{eq:probpartofunity}
\begin{dcases}
(u(t,x)\zeta(t-s))_{t}+(\lambda+A)(u(t,x)\zeta(t-s))\\
\ \ =\zeta(t-s)f(t,x)+\zeta_{t}(t-s)u(t,x) & {\rm on}\ \R^{d+1}_{+}\\
B_{j}(u(t,x)\zeta(t-s))\big|_{x_{1}=0}=0 & {\rm on}\ \R\times \R^{d-1}.
\end{dcases}
\end{equation}
For each $s\in\R$, since $u(t,x)\zeta(t-s)$ vanishes outside $(s-(R_0R_1)^{2m},s)\times \R^{d}_{+}$, by Lemma \ref{lemma:normspq} applied to \eqref{eq:probpartofunity} we get
\begin{align}\label{eq:partofunity1}
&\sum_{|\alpha|<2m}\lambda^{1-\frac{|\alpha|}{2m}}
\|D^{\alpha}(u\zeta(\cdot-s))\|_{L_{p,q,v,w}(\R^{d+1}_{+})}\nonumber\\
&\le C\|f\zeta(\cdot-s)\|_{L_{p,q,v,w}(\R^{d+1}_{+})}
+C\|u\zeta_{t}(\cdot-s)\|_{L_{p,q,v,w}(\R^{d+1}_{+})},
\end{align}
where $C=C(d,m,K,p,q,[v]_{p},[w]_{q},b_{j\beta})$. Note that
\begin{equation*}
\begin{aligned}
\|D^{\alpha}u(t,\cdot)\|^{p}_{L_{q,w}(\R^{d}_{+})}&=\int_{\R}\|D^{\alpha}u(t,\cdot)\|^{p}_{L_{q,w}(\R^{d}_{+})}\zeta(t-s)^{p}\,ds\\
&=\int_{\R}\|D^{\alpha}u(t,\cdot)\zeta(t-s)\|^{p}_{L_{q,w}(\R^{d}_{+})}\,ds.
\end{aligned}
\end{equation*}
Thus, by integrating with respect to $t$,
\[
\|D^{\alpha}u\|^{p}_{L_{p,q,v,w}(\R^{d+1}_{+})}= \int_{\R}\|D^{\alpha}(u\zeta(\cdot-s))\|^{p}_{L_{p,q,v,w}(\R^{d+1}_{+})}\,ds.
\]
From this and \eqref{eq:partofunity1} it follows that
\[
\sum_{|\alpha|\le 2m}\lambda^{1-\frac{|\alpha|}{2m}}\|D^{\alpha}u\|_{L_{p,q,v,w}(\R^{d+1}_{+})}\le C_{1}\|f\|_{L_{p,q,v,w}(\R^{d+1}_{+})}+C_{2}\|u\|_{L_{p,q,v,w}(\R^{d+1}_{+})},
\]
where $C_1=C_1(\theta,d,m,K,p,q,[v]_p,[w]_q,b_{j\beta})>0$ and $C_2$ depends on $R_0R_1$ and the same parameters as $C_1$ does.
\end{proof}
		
Now Theorem \ref{thm:VMOproblemLS} follows from Proposition \ref{prop:interpolation}.
	
\begin{proof}[Proof of Theorem \ref{thm:VMOproblemLS}.] It suffices to consider $T=\infty$. For the general case when $T\in (-\infty,\infty]$, we can follow the proof of Lemma \ref{lemma:estconstcoeff} with the obvious changes in the weighted setting, so we omit the details.
	
(i) In Proposition \ref{prop:interpolation} we take
$\lambda_0\geq 0$ depending only on $C_2$ such that
\[
\frac{1}{2}\sum_{|\alpha|\le 2m-1}\lambda^{1-\frac{|\alpha|}{2m}}\le \sum_{|\alpha|\le 2m-1}\bigg(\lambda^{1-\frac{|\alpha|}{2m}}-C_2\bigg)
\]
for any $\lambda\geq\lambda_0$. By \eqref{eq:propinterpolation} we get
\begin{multline*}
\frac{1}{2}\sum_{|\alpha|\le 2m-1}\lambda^{1-\frac{|\alpha|}{2m}}\|D^{\alpha}u\|_{L_{p,q,v,w}(\R^{d+1}_{+})}
+\|D^{2m}u\|_{L_{p,q,v,w}(\R^{d+1}_{+})}\\
\le C\|f\|_{L_{p,q,v,w}(\R^{d+1}_{+})}
\end{multline*}
and thus
\begin{equation}\label{eq:propinterpfinal}
\sum_{|\alpha|\le 2m}\lambda^{1-\frac{|\alpha|}{2m}}\|D^{\alpha}u\|_{L_{p,q,v,w}(\R^{d+1}_{+})}\le C\|f\|_{L_{p,q,v,w}(\R^{d+1}_{+})}.
\end{equation}
Finally, the estimate of $\|u_{t}\|_{L_{p,q,v,w}(\R^{d+1}_{+})}$ follows by noting that $u_{t}=f-(A+\lambda)u$ and \eqref{eq:propinterpfinal}. This proves \eqref{eq:VMOtimespaceLS}.
			
(ii) As in the proof of Proposition \ref{prop:interpolation}, we can assume the lower-order coefficients of $A$ to be zero. Let
$$
A(0,0)u:=\sum_{|\alpha|=2m}a_{\alpha}(0,0)D^{\alpha}.
$$
By Lemma \ref{lemma:estconstcoeff}, we first solve
\begin{align*}
\begin{dcases}
\partial_t u_1+(\lambda+A(0,0))u_1=0 & {\rm in}\ \R^{d+1}_{+}\\
\sum_{|\beta|=m_j}b_{j\beta}D^{\beta}u_1\Big|_{x_{1}=0}=- \sum_{|\beta|<m_j}b_{j\beta}(t,x)D^{\beta}u\Big|_{x_{1}=0} + g_j& {\rm on}\ \R\times \R^{d-1},
\end{dcases}
\end{align*}
and by Theorem \ref{thm:spacialtracepq} we get
\begin{multline}\label{eq:probu1}
\|\partial_t u_1\|_{L_{p}(\R;L_{q}(\R^{d}_{+}))}+\sum_{|\alpha|\le 2m}\lambda^{1-\frac{|\alpha|}{2m}}\|D^{\alpha}u_1\|_{L_{p}(\R;L_{q}(\R^{d}_{+}))}\\
\le C\|\sum_{|\beta|<m_j}b_{j\beta}D^{\beta}u\|_{W^{(2m-m_j)\frac{1}{2m}}_{p}
	(\R;L_{q}(\R^{d}_{+}))\cap L_{p}(\R;W^{2m-m_j}_{q}(\R^{d}_{+}))}\\
+ \sum_{j=1}^m\|g_j\|_{F^{k_j}_{p,q}(\R;L_{q}(\R^{d-1}))\cap L_{p}(\R;\B_{q,q}^{2mk_j}(\R^{d-1}))}.
\end{multline}
Next $u_2=u-u_1$ satisfies the equation
\begin{equation*}
\begin{dcases}
\partial_t u_2+(\lambda+A)u_2=f-(A-A(0,0))u_1 & {\rm in}\ \R^{d+1}_{+}\\
\sum_{|\beta|=m_j}b_{j\beta}D^{\beta}u_2\Big|_{x_{1}=0}=0& {\rm on}\ \R\times \R^{d-1},
\end{dcases}
\end{equation*}
to which we can apply statement (i) with $v=w=1$ to get
\begin{equation}\label{eq:probu2}
\begin{aligned}
&\|\partial_t u_2\|_{L_{p}(\R;L_{q}(\R^{d}_{+}))}+\sum_{|\alpha|\le 2m}\lambda^{1-\frac{|\alpha|}{2m}}\|D^{\alpha}u_2\|_{L_{p}(\R;L_{q}(\R^{d}_{+}))} \\
&\le C\|f\|_{L_{p}(\R;L_{q}(\R^{d}_{+}))}+C\|(A-A(0,0))u_1\|_{L_{p}(\R;L_{q}(\R^{d}_{+}))}\\
&\le C\|f\|_{L_{p}(\R;L_{q}(\R^{d}_{+}))}+C_{K} \|D^{2m}u_1\|_{L_{p}(\R;L_{q}(\R^{d}_{+}))},
\end{aligned}
\end{equation}
with $\lambda\geq \lambda_0$, where $\lambda_0$ depends only on the constant $C_2$ from Proposition \ref{prop:interpolation}.
Now, since $u=u_1+u_2$, by \eqref{eq:probu2},
\begin{align*}
&\|u_t\|_{L_{p}(\R;L_{q}(\R^{d}_{+}))}+\sum_{|\alpha|\le 2m}\lambda^{1-\frac{|\alpha|}{2m}}\|D^{\alpha}u\|_{L_{p}(\R;L_{q}(\R^{d}_{+}))}\\
&\le \|\partial_t u_1\|_{L_{p}(\R;L_{q}(\R^{d}_{+}))}+\sum_{|\alpha|\le 2m}\lambda^{1-\frac{|\alpha|}{2m}}\|D^{\alpha}u_1\|_{L_{p}(\R;L_{q}(\R^{d}_{+}))}\\
&\quad +\|\partial_t u_2\|_{L_{p}(\R;L_{q}(\R^{d}_{+}))}+\sum_{|\alpha|\le 2m}\lambda^{1-\frac{|\alpha|}{2m}}\|D^{\alpha}u_2\|_{L_{p}(\R;L_{q}(\R^{d}_{+}))}\\
&\le \|\partial_t u_1\|_{L_{p}(\R;L_{q}(\R^{d}_{+}))}+\sum_{|\alpha|\le 2m}\lambda^{1-\frac{|\alpha|}{2m}}\|D^{\alpha}u_1\|_{L_{p}(\R;L_{q}(\R^{d}_{+}))}\\
&\quad +C_{K}\|D^{2m}u_1\|_{L_{p}(\R;L_{q}(\R^{d}_{+}))}
+C\|f\|_{L_{p}(\R;L_{q}(\R^{d}_{+}))},
\end{align*}
which by \eqref{eq:probu1} is further bounded by
\begin{align*}
&C\|f\|_{L_{p}(\R;L_{q}(\R^{d}_{+}))}
+C_K\sum_{j=1}^m\|g_j\|_{F^{k_j}_{p,q}(\R;L_{q}(\R^{d-1}))\cap L_{p}(\R;\B_{q,q}^{2mk_j}(\R^{d-1}))}\\
&\quad +
C_{K}\|\sum_{|\beta|<m_j}b_{j\beta}(t,x)
D^{\beta}u\|_{W^{(2m-m_j)\frac{1}{2m}}_{p}(\R;L_{q}(\R^{d}_{+}))\cap L_{p}(\R;W^{2m-m_j}_{q}(\R^{d}_{+}))}\\
&\le C\|f\|_{L_{p}(\R;L_{q}(\R^{d}_{+}))}+ C_K\sum_{j=1}^m\|g_j\|_{F^{k_j}_{p,q}(\R;L_{q}(\R^{d-1}))\cap L_{p}(\R;\B_{q,q}^{2mk_j}(\R^{d-1}))}\\
&\quad + C_{K}(C\varepsilon\|D^{2m}u\|_{L_{p}(\R;L_{q}(\R^{d}_{+}))}
+C\varepsilon\|u_t\|_{L_{p}(\R;L_{q}(\R^{d}_{+}))}+C_{\varepsilon}\|u\|_{L_{p}(\R;L_{q}(\R^{d}_{+}))}),
\end{align*}
where the last inequality follows from the smoothness the coefficients $b_{j\beta}(t,x)$ for $|\beta|<m_j$ and by using interpolation estimates as in Lemma \ref{lemma:interpolemmapq}. Now, taking $\varepsilon$ small enough so that $C_{K}C\varepsilon \le 1/2$ and $\lambda$ such that $\lambda\geq \max\{\lambda_0,2C_{K}C_{\varepsilon}\}$, we get \eqref{eq:VMOtimespaceLSgj}. \end{proof}
		
From Theorem \ref{thm:VMOproblemLS}, we now prove Theorem \ref{thm:VMOellipticLS}.
		
\begin{proof}[Proof of Theorem \ref{thm:VMOellipticLS}]
(i) Take $\zeta\in C_0^{\infty}(\R)$ and set $v(t,x)=\zeta(t/n)u(x)$, $n\in\Z$, which satisfies, in $\R^{d+1}_{+}$
\begin{equation}\label{prob:proofVMOellLS}
\begin{dcases}
v_{t}(t,x)+(A+\lambda)v(t,x)=h & {\rm in}\ \R\times\R^{d}_{+}\\
B_{j}v(t,x)\big|_{x_1=0}=0 & {\rm on}\ \R\times\R^{d-1},
\end{dcases}
\end{equation}
with $h:=\frac{1}{n}\zeta_{t}(\frac{t}{n})u(x)+\zeta(\frac{t}{n})f$.
If we now apply Theorem \ref{thm:VMOproblemLS} to \eqref{prob:proofVMOellLS} with $v=1$ we get
\begin{equation}\label{eq:proofVMOellLS}
\sum_{|\alpha|\le 2m}\lambda^{1-\frac{|\alpha|}{2m}}\|D^{\alpha}v\|_{L_{p}(\R;L_{q,w}(\R^{d}_{+}))}\le C\|h\|_{L_{p}(\R;L_{q,w}(\R^{d}_{+}))},
\end{equation}
with $C=C(\theta,m,d,K,p,q,R_0,[w]_{q},b_{j\beta})$. Observe now that
\begin{equation*}
\|h\|_{L_{p}(\R;L_{q,w}(\R^{d}_{+}))}\le \frac{1}{n}\|\zeta_t(\cdot/n)\|_{L_p(\R)}\|u\|_{L_{q,w}(\R^{d}_{+})}
+\|\zeta(\cdot/n)\|_{L_p(\R)}\|f\|_{L_{q,w}(\R^{d}_{+})},
\end{equation*}
and
\[
\|D^{\alpha}v\|_{L_{p}(\R;L_{q,w}(\R^{d}_{+}))}
=\|\zeta(\cdot/n)\|_{L_p(\R)}\|D^{\alpha}u\|_{L_{q,w}(\R^{d}_{+})}.
\]
Thus, combining the above estimates with \eqref{eq:proofVMOellLS} and letting $n\rightarrow +\infty$, we get \eqref{eq:VMOellipticLS}.
(ii) The estimate \eqref{eq:VMOellipticLSgj} follows in the same way from \eqref{eq:VMOtimespaceLSgj}.
\end{proof}

\section{Existence of solutions}\label{sec:solvab}
The a priori estimates of Theorems \ref{thm:VMOproblemLS} and \ref{thm:VMOellipticLS} can be used to derive the existence of solutions to the corresponding equations.
In this section we focus on the solvability of the parabolic problem \eqref{prob:VMOtimespaceLS}. The elliptic case follows in the same way from the a priori estimates in Theorem \ref{thm:VMOellipticLS}.

As in the proof of Lemma \ref{lemma:estconstcoeff}, via a standard argument it suffices to consider $T=\infty$. See, for instance, \cite[Theorem 2.1]{Kry07}.
Under the conditions in Theorem \ref{thm:VMOproblemLS}(ii), from the a priori estimate \eqref{eq:VMOtimespaceLSgj}, the standard method of continuity (see \cite[Theorem 5.2]{GilTru}) combined with Lemma \ref{lemma:estconstcoeff}, yields existence and uniqueness of a strong solution to \eqref{prob:VMOtimespaceLS}.

We now assume that the conditions in Theorem \ref{thm:VMOproblemLS}(i) are satisfied and we show the solvability of \eqref{prob:VMOtimespaceLS} via a density argument as in \cite[Section 8]{DK16}. By reverse H\"{o}lder's inequality and the doubling property of $A_p$-weights, one can find a sufficiently large constant $p_1$ and small constants $\varepsilon_1,\varepsilon_2\in(0,1)$ depending on $d$, $p$, $q$, $[v]_{p}$, $[w]_{q}$ such that
$$
1-\frac{p}{p_1}=\frac{1}{1+\varepsilon_1},\ \ 1-\frac{q}{p_1}=\frac{1}{1+\varepsilon_2},
$$
and both $v^{1+\varepsilon_1}$ and $w^{1+\varepsilon_2}$ are locally integrable and satisfy the doubling property, i.e. for every $r>0,\ t_0\in\R,\ x_0\in\R^{d}_{+}$,
\begin{align}                           \label{eq:veps1}
&\int_{I_{2r}(t_0)}v^{1+\varepsilon_1}dt\le C_0\int_{I_{r}(t_0)}v^{1+\varepsilon_1}dt,\\
                                \label{eq:weps1}
&\int_{B_{2r}^{+}(x_0)}w^{1+\varepsilon_1}dt\le C_0\int_{B_{r}^{+}(x_0)}w^{1+\varepsilon_1}dt,
\end{align}
where $C_0$ is independent of $r$, $t_0$, and $x_0$, and $I_{r}(t_0)=(t_0-r^{2m},t_0+r^{2m})$ denotes an interval in $\R$.
By H\"{o}lder's inequality, any function $f\in L_{p_1}(\R^{d+1}_{+})$ is locally in $L_{p,q,v,w}(\R^{d+1}_{+})$ and for any $r>0$,
\begin{equation}\label{eq:estweightp1}
\|f\|_{L_{p,q,v,w}(\mathcal{Q}_{r}^{+})}\le C\|f\|_{L_{p_1}(\mathcal{Q}_{r}^{+})},
\end{equation}
where $\mathcal{Q}_{r}^{+}=((-r^{2m},r^{2m})\times B_{r}) \cap \R^{d+1}_{+}$, with $B_{r}$ being a ball of radius $r$ in $\R^{d}$, and $C$ depends also on $r$.

Now if $f\in L_{p,q,v,w}(\R^{d+1}_{+})$, by the denseness of $C_{0}^{\infty}(\overline{\R^{d+1}_{+}})$ in $L_{p,q,v,w}(\R^{d+1}_{+})$, we can find a sequence of smooth functions $\{f_k\}_{k=0,1,\ldots}$ with bounded supports such that
\begin{equation}\label{eq:fkdense}
f_{k}\rightarrow f\ \ {\rm in}\ \ L_{p,q,v,w}(\R^{d+1}_{+})\ \ {\rm as}\ k\rightarrow\infty.
\end{equation}
Since for each $k$, $f_k\in L_{p_1}(\R^{d+1}_{+})$, by the solvability in the unweighted setting of Theorem \ref{thm:VMOproblemLS}(ii) with $p_1$ instead of $p=q$, zero lower-order coefficients for $B_j$ and $g_j\equiv 0$, there exists a unique solution $u_{k}\in W^{1,2m}_{p_1}(\R^{d+1}_{+})$ to
\begin{equation*}
\begin{dcases}
(u_k)_{t}(t,x) + (A+\lambda)u_k(t,x)=f_k(t,x) & {\rm in}\ \R\times\R^{d}_{+}\\
B_{j}u_k(t,x)\big|_{x_1=0}=0 & {\rm on}\ \R\times\R^{d-1},\ \ j=1,\ldots,m,
\end{dcases}
\end{equation*}
provided that $\lambda\geq \lambda_1(\theta,m,d,p_1,K,R_0,b_{j\beta})$ and  $\rho\le\rho_1(\theta,m,d,p_1,K,b_{j\beta})$.

We claim that if $\lambda\geq \max\{\lambda_0,\lambda_1\}$, then $u_k\in W^{1,2m}_{p,q,v,w}(\R^{d+1}_{+})$. If the claim is proved, it follows from the a priori estimate \eqref{eq:VMOtimespaceLS} and from \eqref{eq:fkdense} that $\{u_k\}$ is a Cauchy sequence in $W^{1,2m}_{p,q,v,w}(\R^{d+1}_{+})$. Let $u$ be its limit. Then, by taking the limit of the equation for $u_k$, it follows that $u$ is the solution to \eqref{prob:VMOtimespaceLS}.

In order to prove the claim, we fix a $k\in\N$ and we assume that $f_k$ is supported in $\mathcal{Q}_{R}^{+}$ for some $R\geq 1$. By \eqref{eq:estweightp1} we have
\begin{equation}\label{eq:proofsolv1}
\|D^{\alpha}u_k\|_{L_{p,q,v,w}(\mathcal{Q}_{2R}^{+})}<\infty,\ 0\le|\alpha|\le 2m
\end{equation}
and
\begin{equation}\label{eq:proofsolv2}
\|(u_k)_t\|_{L_{p,q,v,w}(\mathcal{Q}_{2R}^+)}<\infty.
\end{equation}
For $j\geq 0$, we take a sequence of smooth functions $\eta_j$ such that $\eta_j\equiv 0$ in $\mathcal{Q}_{2^j R}^{+}$, $\eta_j\equiv 1$ outside $\mathcal{Q}_{2^{j+1}R}^{+}$ and
\[
|D^{\alpha}\eta_j|\le C2^{-j|\alpha|},\ \ |\alpha|\le 2m,\ \ |(\eta_j)_{t}|\le C2^{-2mj}.
\]
Observe that $u_k\eta_j\in W^{1,2m}_{p_1}(\R^{d+1}_{+})$ satisfies
\begin{equation*}
\begin{dcases}
\partial_t(u_k\eta_j)+(A+\lambda)(u_k\eta_j)=f_j\ \ {\rm in}\ \R^{d+1}_{+}
\\B_j(u_k\eta_j)\big|_{x_1=0}={\rm tr}_{x_1=0}G_{j}\ \ {\rm on}\ \partial\R^{d+1}_{+},\  j=1,\ldots,m,
\end{dcases}
\end{equation*}
where by Leibnitz's rule
\[
f_{j}=u_k(\eta_{j})_{t}+\sum_{1\le |\alpha|\le 2m}\sum_{|\gamma|\le|\alpha|-1}\binom{\alpha}{\gamma}a_{\alpha}D^{\gamma}u_kD^{\alpha-\gamma}\eta_{j}\]
and
\[ G_{j}=\sum_{|\beta|=m_j}\sum_{|\tau|\le m_j-1}\binom{\beta}{\tau}b_{j\beta}D^{\tau}u_kD^{\beta-\tau}\eta_{j}.
\]

Now let
$$
g_j={\rm tr}_{x_1=0}G_j\in W_{p_1}^{1-\frac{m_j}{2m}-\frac{1}{2mp_1},2m-m_{j}-\frac{1}{p_1}}(\R\times \R^{d-1}).
$$
By applying the a priori estimate \eqref{eq:VMOtimespaceLSgj}, with $p_1$ instead of $p=q$ there, to $u_k\eta_j$, we get
\begin{multline*}
\|\partial_t(u_k\eta_j)\|_{L_{p_1}(\R^{d+1}_{+})}+
\sum_{|\alpha|\le 2m}\lambda^{1-\frac{|\alpha|}{2m}}\|D^{\alpha}(u_k\eta_j)\|_{L_{p_1}(\R^{d+1}_{+})}\\
\le C\|f_j\|_{L_{p_1}(\R^{d+1}_{+})} + C\sum_{j=1}^{m}\|g_{j}\|_{W_{p_1}^{1-\frac{m_j}{2m}-\frac{1}{2mp_1},2m-m_{j}-\frac{1}{p_1}}(\R\times \R^{d-1})},
\end{multline*}
with a constant $C=C(\theta,m,d,K,p_1,b_{j\beta})>0$. By Theorem \ref{thm:MeyLemma1.3.11} with $s=1-m_j/(2m)\in (0,1]$, $m_{j}\in\{0,\ldots,2m-1\}$, we have
\[
\|g_{j}\|_{W_{p_1}^{1-\frac{m_j}{2m}-\frac{1}{2mp_1},2m-m_{j}-\frac{1}{p_1}}(\R\times \R^{d-1})}\le C\|G_{j}\|_{W_{p_1}^{1-\frac{m_j}{2m},2m-m_{j}}(\R^{d+1}_{+})}.
\]
Observe that
\[
\|f_j\|_{L_{p_1}(\R^{d+1}_{+})}\le C \|(\eta_{j})_{t}u_k\|_{L_{p_1}(\R^{d+1}_{+})}+C\sum_{1\le |\alpha|\le 2m}
\sum_{|\gamma|\le |\alpha|-1}\|D^{\gamma}u_k D^{\alpha-\gamma}\eta_{j}\|_{L_{p_1}(\R^{d+1}_{+})}
\]
and
\begin{align*}
\|G_{j}\|_{W_{p_1}^{1-\frac{m_j}{2m},2m-m_{j}}(\R^{d+1}_{+})}
\le C\sum_{|\beta|=m_j}\sum_{|\tau|\le m_j-1}\|D^{\tau}u_k D^{\beta-\tau}\eta_j\|_{W_{p_1}^{1-\frac{m_j}{2m},2m-m_{j}}(\R^{d+1}_{+})}.
\end{align*}
This implies that
\begin{multline*}
\|\partial_t(u_k\eta_j)\|_{L_{p_1}(\R^{d+1}_{+})}+\sum_{|\alpha|\le 2m}\lambda^{1-\frac{|\alpha|}{2m}}\|D^{\alpha}(u_k\eta_j)\|_{L_{p_1}(\R^{d+1}_{+})}\\
\le C \|(\eta_{j})_{t}u_k\|_{L_{p_1}(\R^{d+1}_{+})}
+C\sum_{1\le |\alpha|\le 2m}
\sum_{|\gamma|\le |\alpha|-1}\|D^{\gamma}u_k D^{\alpha-\gamma}\eta_{j}\|_{L_{p_1}(\R^{d+1}_{+})}\\ + C\sum_{|\beta|=m_j}\sum_{|\tau|\le m_j-1}\|D^{\tau}u_k D^{\beta-\tau}\eta_j\|_{W_{p_1}^{1-\frac{m_j}{2m},2m-m_{j}}(\R^{d+1}_{+})},
\end{multline*}
from which it follows that
\begin{multline*}
\|(u_k)_{t}\|_{L_{p_1}(\R^{d+1}_{+}\backslash\mathcal{Q}_{2^{j+1}R}^{+})}+\sum_{|\alpha|\le 2m}\lambda^{1-\frac{|\alpha|}{2m}}\|D^{\alpha}u_k\|_{L_{p_1}(\R^{d+1}_{+}\backslash\mathcal{Q}^{+}_{2^{j+1}R})}\\
\le C2^{-j} \|u_k\|_{L_{p_1}(\mathcal{Q}^{+}_{2^{j+1}R}\backslash\mathcal{Q}^{+}_{2^{j}R})}
+C2^{-j}\sum_{1\le |\alpha|\le 2m}
\sum_{|\gamma|\le |\alpha|-1}\|D^{\gamma}u_k\|_{L_{p_1}(\mathcal{Q}^{+}_{2^{j+1}R}\backslash\mathcal{Q}^{+}_{2^{j}R})}\\ + C2^{-j}\sum_{|\beta|=m_j}\sum_{|\tau|\le m_j-1}\|D^{\tau}u_k\|_{W_{p_1}^{1-\frac{m_j}{2m},2m-m_{j}}(\mathcal{Q}^{+}_{2^{j+1}R}\backslash\mathcal{Q}^{+}_{2^{j}R})}.
\end{multline*}
By standard interpolation inequalities (see e.g. \cite{krylov}),
\[
\|D^{\gamma}u_k\|_{L_{p}(\mathcal{Q}^{+}_{2^{j+1}R}
\setminus\mathcal{Q}^{+}_{2^{j}R})}\le
C\|D^{2m}u_k\|_{L_{p}(\mathcal{Q}^{+}_{2^{j+1}R}\backslash\mathcal{Q}^{+}_{2^{j}R})}
+C\|u_k\|_{L_{p}(\mathcal{Q}^{+}_{2^{j+1}R}\backslash\mathcal{Q}^{+}_{2^{j}R})},
\]
and by the interpolation estimates as in Lemma \ref{lemma:MeyLemma1.3.13},
\begin{align*}
&\|D^{\tau}u_k\|_{W_{p}^{1-\frac{m_j}{2m},2m-m_{j}}(\mathcal{Q}^{+}_{2^{j+1}R}\backslash\mathcal{Q}^{+}_{2^{j}R})}\\
&\le C\|D^{2m}u_k\|_{L_{p}(\mathcal{Q}^{+}_{2^{j+1}R}\backslash\mathcal{Q}^{+}_{2^{j}R})}
+C\|(u_k)_t\|_{L_{p}(\mathcal{Q}^{+}_{2^{j+1}R}
\backslash\mathcal{Q}^{+}_{2^{j}R})}+C\|u_k\|_{L_{p}(\mathcal{Q}^{+}_{2^{j+1}R}\backslash\mathcal{Q}^{+}_{2^{j}R})}.
\end{align*}

Thus, we get
\begin{multline*}
\|(u_k)_{t}\|_{L_{p_1}(\mathcal{Q}^{+}_{2^{j+2}R}\backslash\mathcal{Q}^{+}_{2^{j+1}R})}+\sum_{|\alpha|\le2m}\lambda^{1-\frac{|\alpha|}{2m}}\|D^{\alpha}u_k\|_{L_{p_1}(\mathcal{Q}^{+}_{2^{j+2}R}\backslash\mathcal{Q}^{+}_{2^{j+1}R})}\\
\le C2^{-j}(\|(u_k)_{t}\|_{L_{p_1}(\mathcal{Q}^{+}_{2^{j+1}R}\backslash\mathcal{Q}^{+}_{2^{j}R})}+\|D^{2m}u_k\|_{L_{p_1}(\mathcal{Q}^{+}_{2^{j+1}R}\backslash\mathcal{Q}^{+}_{2^{j}R})}\\
+\|u_k\|_{L_{p_1}(\mathcal{Q}^{+}_{2^{j+1}R}\backslash\mathcal{Q}^{+}_{2^{j}R})}).
\end{multline*}
By induction, we obtain for each $j\geq 1$,
\begin{multline}\label{eq:inductest}
\|(u_k)_{t}\|_{L_{p_1}(\mathcal{Q}^{+}_{2^{j+1}R}\backslash\mathcal{Q}^{+}_{2^{j}R})}+\sum_{|\alpha|\le 2m}\lambda^{1-\frac{|\alpha|}{2m}}\|D^{\alpha}u_k\|_{L_{p_1}(\mathcal{Q}^{+}_{2^{j+1}R}\backslash\mathcal{Q}^{+}_{2^{j}R})}\\
\le C^j 2^{-\frac{j(j-1)}{2}}(\|(u_k)_{t}\|_{L_{p_1}(\mathcal{Q}^{+}_{2R})}+\|D^{2m}u_k\|_{L_{p_1}(\mathcal{Q}^{+}_{2R})}+\|u_k\|_{L_{p_1}(\mathcal{Q}^{+}_{2R})}).
\end{multline}
Finally, by Holder's inequality, \eqref{eq:veps1}, \eqref{eq:weps1} and \eqref{eq:inductest}, we get for each $j\geq 1$,
\begin{equation*}
\begin{split}
&\|(u_k)_{t}\|_{L_{p,q,v,w}(\mathcal{Q}^{+}_{2^{j+1}R}\backslash\mathcal{Q}^{+}_{2^{j}R})}+\sum_{|\alpha|\le 2m}\lambda^{1-\frac{|\alpha|}{2m}}\|D^{\alpha}u_k\|_{L_{p,q,v,w}(\mathcal{Q}^{+}_{2^{j+2}R}\backslash\mathcal{Q}^{+}_{2^{j+1}R})}\\
&\le \|v\|^{\frac{1}{p}}_{L_{1+\varepsilon_1}(I_{2^{j+1}R})}\|w\|^{\frac{1}{q}}_{L_{1+\varepsilon_2}(B^+_{2^{j+1}R})}
\Big(\|(u_k)_t\|_{L_{p_1}(\mathcal{Q}^{+}_{2^{j+2}R}\backslash\mathcal{Q}^{+}_{2^{j+1}R})}\\
&\quad +\|D^{2m}u_k\|_{L_{p_1}(\mathcal{Q}^{+}_{2^{j+1}R}\backslash\mathcal{Q}^{+}_{2^{j}R})}+\|u_k\|_{L_{p_1}(\mathcal{Q}^{+}_{2^{j+1}R}
\backslash\mathcal{Q}^{+}_{2^{j}R})}\Big)\\
&\le CC^{j(1+\frac{1}{p}+\frac{1}{q})}2^{-\frac{j(j-1)}{2}}
\Big(\|(u_k)_t\|_{L_{p_1}(\mathcal{Q}^{+}_{2R})}+
\|D^{2m}u_k\|_{L_{p_1}(\mathcal{Q}^{+}_{2R})}+\|u_k\|_{L_{p_1}(\mathcal{Q}^{+}_{2R})}\Big).
\end{split}
\end{equation*}
The above inequality together with \eqref{eq:proofsolv1} and \eqref{eq:proofsolv2} implies that $u_k\in W^{1,2m}_{p,q,v,w}(\R^{d+1}_{+})$, which proves the claim.

\begin{remark}
Under certain compatibility condition, the solvability of the corresponding initial-boundary value problem can also be obtained. See, for instance, \cite[Sect. 2.5]{krylov} and \cite{DHP07} for details.
\end{remark}

\def\cprime{$'$}

\end{document}